\soulregister\cite{7}
\soulregister\ref{7}
\newcommand{\R}{\mathbb R}
\newcommand{\Z}{\mathbb Z}
\newcommand{\T}{\mathbb T}
\newcommand{\p}{\partial}
\numberwithin{equation}{section}
\newtheorem{theorem}{Theorem}[section]
\newtheorem{proposition}[theorem]{Proposition}
\newtheorem{remark}[theorem]{Remark}
\newtheorem{lemma}[theorem]{Lemma}
\newtheorem{corollary}[theorem]{Corollary}
\newtheorem{definition}[theorem]{Definition}
\begin{document}
\title[Well-posedeness]{Local well-posedness for a system of modified KdV equations in modulation spaces}

\author{X. Carvajal}
\address{Instituto de Matem\'atica, UFRJ, 21941-909, Rio de Janeiro, RJ, Brazil}
\email{carvajal@im.ufrj.br}
\author{F. Cuba}
\address{Instituto de Matem\'atica, UFRJ, 21941-909, Rio de Janeiro, RJ, Brazil}
\email{fidel.cubab@unmsm.edu.pe }
\author{M. Panthee}
\address{Department of Matematics, University of Campinas (UNICAMP), Campinas, SP, Brazil}
\email{mpanthee@unicamp.br}

\keywords{Korteweg-de vries (KdV) equation, Initial value problems, Fourier-Lebesgue spaces,  Modulation spaces,  Well-posedness}
\subjclass[2020]{35A01, 35Q53}
\begin{abstract}
 In this work, we consider the initial value problem (IVP) for a system of modified Korteweg-de Vries (mKdV) equations
 \begin{equation*}
 \begin{cases}
\partial_t v  +  \partial_x^3 v+ \partial_x (v w^2)  = 0, \hspace{0.98 cm} v(x,0)=\psi(x),\\
\partial_t w  +  \alpha \partial_x^3 w+\partial_x (v^2 w)  = 0,\hspace{0.5 cm} w(x,0)=\phi(x).
\end{cases}
\end{equation*}
The main interest is in addressing the well-posedness issues of the IVP when the initial data are considered in the modulation  space $M_s^{2,p}(\R)$, $p\geq 2$. In the case when $0<\alpha\ne 1$, we derive new trilinear estimates in these spaces and prove that the IVP is locally well-posed for data in $M_s^{2,p}(\R)$ whenever $s> \frac14-\frac{1}{p}$ and $p\geq 2$. In deriving the trilinear estimate, the fact that the Fourier supports of the solution components $v$ and $w$  lie on distinct cubic curves, namely $\tau = \xi^3$ and $\tau = \alpha\xi^3$, introduces additional difficulties in handling the resonant case. This makes the analysis substantially different from what one encounters in the single-equation setting.  To overcome the difficulties arising in the resonant case, it was necessary to impose the more restrictive condition  $s> \frac14-\frac{1}{p}$  on the trilinear estimate, rather than the natural threshold $s> \frac14-\frac{3}{2p}$ , which would otherwise yield sharp local well-posedness for $s>-\frac12$  when $p=2$.
\end{abstract}

\maketitle

\section{Introduction}

In  this work, we consider the initial value problem (IVP) for  the following system of the modified Korteweg-de Vries (mKdV) type equations
\begin{equation}\label{PVI_Majda-Biello equation}
	\left\{
	\begin{aligned}
		& \partial_t v  +  \partial_x^3 v+ \partial_x (v w^2)  = 0, \hspace{0.98 cm} v(x,0)=\psi(x),\\
		& \partial_t w  +  \alpha \partial_x^3 w+\partial_x (v^2 w)  = 0,
		\hspace{0.5 cm} w(x,0)=\phi(x),
	\end{aligned}
	\right.
\end{equation}
with given data in modulation spaces (see \eqref{Def_Esp_Modulation} below), where $x,t\in\mathbb{R}$, $0<\alpha\neq 1$, $v=v(x,t)$ and $w=w(x,t)$ are real-valued functions.

For $\alpha=1$ the system (\ref{PVI_Majda-Biello equation}) reduces to a special case of broad class of nonlinear evolution cosidered by Ablowitz, Kaup, Newell and Segur \cite{M_Ablowitz} in the inverse scattering context. In this scenario, the issues of well-posedness, existence, and stability of solitary waves for this system have been extensively investigated in the literature.
\\
Applying the technique developed by Kenig, Ponce and Vega \cite{C_Kening_Ponce}, we can prove that the IVP (\ref{PVI_Majda-Biello equation}) is locally well-posed for given data in $H^s(\mathbb{R})\times H^s(\mathbb{R})$, for $s\geq \frac{1}{4}$ and $\alpha=1$. This approach utilizes the smoothing effect of the linear group, combined with the $L^p_x L^q_t$ Strichartz estimates and maximal function estimates. Tao \cite{Tao_2} demonstrated that this local result can also be established using the restricted norm method in the Fourier transform space $X^{s,b}$, as introduced by Bourgain \cite{Bourgain}. In \cite{Montenegro} Montenegro proved the local well-posedness for $s\geq\frac{1}{4}$ and, using the conservation laws of the system (\ref{PVI_Majda-Biello equation}), also proves global well-posedness for initial data in $H^s(\mathbb{R})\times H^s(\mathbb{R})$, for $s\geq 1$. For further improvement of the global result for $s>\frac14$, we refer to \cite{CP}.\\

For $0<\alpha<1$, the  well-posedness issue for the IVP (\ref{PVI_Majda-Biello equation}) is quite different. Although the technique of Kenig, Ponce and Vega \cite{C_Kening} readily yields the local well-posedness in $H^s(\mathbb{R})\times H^s(\mathbb{R})$ for $s\geq \frac{1}{4}$,  the Fourier transform restriction norm space introduced by Bourgain \cite{Bourgain}, provides a much better result as was shown in our previous work \cite{Carvajal_and_Panthee}. While using Bourgain space, the resonance term coming from different groups associated to the linear part behaves better to get the crucial trilinear estimate used in the local well-posedness result. In fact, we taking idea from Oh \cite{Tadahiro_1}, in \cite{Carvajal_and_Panthee}, we derived the following trilinear estimates for intereacting nonlinearity
\begin{equation}\label{tlint-m1}
\|(vw^2)_x\|_{X^{s,b'}}\lesssim  \|v\|_{X^{s,b}} \|w\|_{X_{\alpha}^{s,b}}^2
\end{equation}
and
\begin{equation}\label{tlint-m2}
 \|(v^2w)_x\|_{X_{\alpha}^{s,b'}}\lesssim  \|v\|_{X^{s,b}}^2 \|w\|_{X_{\alpha}^{s,b}},
\end{equation}
 which hold for $s>-\frac12$ when $0<\alpha<1$, thereby yielding the local well-posedness result in $H^s(\mathbb{R})\times H^s(\mathbb{R})$ for $s> -\frac12$ (For definition of $X^{s,b}$ and $X^{s,b}_\alpha$ spaces, see (\ref{Bourgain spaces adapted})).
 
At this point we mention the work of Oh \cite{Tadahiro_1,Tadahiro_2} on the KdV and the Majda-Biello system introduced in \cite{Majda_Biello}
\begin{equation}\label{Majda-Biello}
	\left\{
	\begin{aligned}
		& \partial_t v+\partial^3_x v+\partial_x(w^2) 
		=  0,
		\hspace{0.98 cm} v(x,0)=\psi(x),\\
		& \partial_t w+\alpha \partial^3_x w+\partial_x(v w) 
		=  0,
		\hspace{0.5 cm} w(x,0)=\phi(x),
	\end{aligned}
	\right.
\end{equation}
where $0< \alpha< 1$. The author in \cite{Tadahiro_1} used the Fourier transform restriction norm method and proved that the IVP (\ref{Majda-Biello}) is locally well posed for data with regularity $s\geq 0$. The main tool in the proof was the validity of the bilinear estimate for the interacting nonlinearity
\begin{equation}\label{Bilinear estimate_introduct}
	\|\partial_x(vw)\|_{X^{s,b'}_\alpha}\lesssim \|v\|_{X^{s,b}} \|w\|_{X^{s,b}_\alpha},
\end{equation}
whenever $s\geq 0$. Recall that the bilinear estimate (\ref{Bilinear estimate_introduct}) in the case $\alpha=1$ holds for $s>-\frac{3}{4}$ (see \cite{C_Kening_2}). In this sense the mKdV system with different groups behaves better than the KdV system with different groups. In the case $\alpha\in (0,1)$ the local result in  \cite{Carvajal_and_Panthee} was further extended in the work \cite{Carvajal_Esquivel_Santos} considering initial data in $H^s(\mathbb{R})\times H^k(\mathbb{R})$,  $k>-\frac{1}{2}$, $s>-\frac{1}{2}$ and $|k-s|\leq\frac{1}{2}$ (see also \cite{Carvajal_3}). 
\\

Recently, the study of the well-posedness of IVPs associated with nonlinear dispersive equations has been studied in function spaces other than the usual  $L^2$ based Sobolev spaces $H^s(\mathbb{R})$, namely, the Fourier-Lebesgue spaces $\mathbf{F}\mathbf{L}^{s,p}(\mathbb{R})$ with the norm
\begin{equation*}
	\|u\|_{\mathbf{F}\mathbf{L}^{s,p}(\mathbb{R})}=\|\langle\xi\rangle^s \widehat{u}(\xi)\|_{L^p},
\end{equation*}
and modulation spaces $M^{r,p}_s(\mathbb{R})$ with the norm given by \eqref{Def_Esp_Modulation}, below. More specifically, we highlight the local well-posedness result for the modified Korteweg-de Vries (mKdV) equation in $\mathbf{FL}^{s,p}(\mathbb{R})$ for $s\geq \frac{1}{2p}$ with $2\leq p<4$ obtained in \cite{Bar:04} and its improvement for the same range of $s$ with $2\leq p<\infty$ in \cite{Grunrock_2}.\\

For further discussion on these results, we refer readers to \cite{Bar:02}, where the authors considered the IVP for the complex-valued mKdV equation in the modulation spaces $M^{2,p}_s(\mathbb{R})$ and proved local well-posedness for $s\geq\frac{1}{4}$ with $2\leq p<\infty$. In \cite{Tadahiro_and_Y_Wang_2} T. Oh and Y. Wang proved that the cubic nonlinear Schr\"odinger (NLS) equation on $\mathbb{R}$:
\begin{equation}\label{one-dimensional cubic NLS}
	\left\{
	\begin{aligned}
		& i \partial_t  u  =  \partial_x^2 u \mp 2|u|^2 u, \hspace{0.5cm} x,t\in\mathbb{R},\\
		& u(x,0)  = u_0(x),
	\end{aligned}
	\right.
\end{equation}
is globally well-posed in $M^{2,p}(\mathbb{R})$ for any $p<\infty$. They also proved that the normalized cubic NLS on $\mathbb{T}$:
\begin{equation}\label{normalized cubic NLS}
	\left\{
	\begin{aligned}
		& i \partial_t  u  =  \partial_x^2 u \mp 2(|u|^2- \frac{1}{\pi}\int_{\mathbb{T}} |u|^2\,dx) u, \hspace{0.5cm} (x,t)\in \mathbb{T}\times\mathbb{R},\\
		& u(x,0)  = u_0(x),
	\end{aligned}
	\right.
\end{equation}
is globally well-posed in $\mathbf{F}\mathbf{L}^p(\mathbb{T})$ for any $1\leq p<\infty$ by introducing a new function space $H\!M^{\theta,p}$ whose norm is given by the $\ell^p$-sum of the modulated $H^{\theta}$-norm of a given function that agrees with the modulation space $M^{2,p}(\R)$ on the real line and Fourier-Lebesgue space $\mathcal{F}L^p(\T)$ on the circle.   Also, in very recent work \cite{Bar:03} the extended nonlinear Schr\"odinger (e-NLS) equation and the higher order nonlinear Schr\"odinger (h-NLS)  equation were considered and proved to be locally well-posed for given data in  the modulation space $M_s^{2,p}(\R)$ respectively for $s>-\frac14$ and $s\geq\frac14$ with $2\leq p<\infty$. Further, recent works \cite{BV-21},  \cite{BV-20}, \cite{CP-24} that respectively deal with  the BBM equation, Boussinesq equation and a higher order water wave model with given initial data in the modulation spaces are worth mentioning.
\\

The main objective of this work is in addressing the well-posedness issues for the IVP (\ref{PVI_Majda-Biello equation}) with given data in the modulation spaces. For $\alpha =1$, it is quite easy to adapt the ideas in \cite{Grunrock_2}, \cite{Grunrock_2} and \cite{Bar:02} and obtain the similar results for the IVP \eqref{PVI_Majda-Biello equation} as well. However, for $\alpha\ne 1$, the scenario changes drastically as described above. Taking in consideration this fact, we assume $\alpha\in (0, 1)$ and  consider the (\ref{PVI_Majda-Biello equation}) with given data in the modulation spaces. With this consideration , we derive new trilinear estimates for the interacting nonlinear terms that hold whenever  $s>\frac{1}{4}-\frac1p$. Consequently, we obtain local well-posedness results for the IVP (\ref{PVI_Majda-Biello equation}) for given data $(v_0,w_0)\in M^{2,p}_s(\mathbb{R})\times M^{2,p}_s(\mathbb{R})$, $s>\frac{1}{4}-\frac1p$ and $2\leq p<\infty$. More precisely, we establish the following local well-posedness result.

\begin{theorem}\label{maintheo}
	Let $\alpha\in (0, 1)$, $s> \frac{1}{4}-\frac{1}{p}$ and $2\leq p <\infty$, then for any $(v_0,w_0)\in M^{2,p}_s (\mathbb{R})\times M^{2,p}_s(\mathbb{R})$, there exist $T=T(\|(v_0,w_0)\|_{M^{2,p}_s\times M^{2,p}_s})>0$ and a unique solution $(v,w)\in X^{s,b}_p(I)\times X^{s,b}_{p,\alpha}(I)$ to the IVP (\ref{PVI_Majda-Biello equation}) in the time interval $I=[0,T]$. Moreover, the solution satisfies the estimate
	\begin{equation*}
		\|(v,w)\|_{X^{s,b}_p(I)\times X^{s,b}_{p,\alpha}(I)}\lesssim \|(v_0,w_0)\|_{M^{2,p}_s\times M^{2,p}_s}.
	\end{equation*}
	where the norm $\|\cdot\|_{X^{s,b}_p(I)}$ and $\|\cdot\|_{X^{s,b}_{p,\alpha}(I)}$ are as defined in \eqref{Bourgain spaces adapted} and \eqref{X^{s,b}_p(I)}.
\end{theorem}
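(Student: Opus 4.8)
The plan is to solve the IVP \eqref{PVI_Majda-Biello equation} on a short interval $I=[0,T]$ by a contraction mapping argument in the modulation-adapted Bourgain spaces $X^{s,b}_p(I)\times X^{s,b}_{p,\alpha}(I)$, with essentially all of the difficulty concentrated in a pair of new trilinear estimates. Write $U(t)=e^{-t\partial_x^3}$ and $U_\alpha(t)=e^{-\alpha t\partial_x^3}$ for the two linear propagators, fix $b$ slightly above $\tfrac12$ and $b'$ slightly above $-\tfrac12$ with $b-b'<1$ (so that a positive power of $T$ is available below), and choose a smooth temporal cutoff $\eta$ with $\eta\equiv 1$ on $[-1,1]$, $\eta_T(t):=\eta(t/T)$. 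Define
\begin{equation*}
\begin{aligned}
\Phi_1(v,w)&=\eta(t)\,U(t)\psi-\eta_T(t)\!\int_0^t U(t-t')\,\partial_x(vw^2)(t')\,dt',\\
\Phi_2(v,w)&=\eta(t)\,U_\alpha(t)\phi-\eta_T(t)\!\int_0^t U_\alpha(t-t')\,\partial_x(v^2w)(t')\,dt'.
\end{aligned}
\end{equation*}
A pair $(v,w)$ solving \eqref{PVI_Majda-Biello equation} on $I$ is precisely the restriction to $I$ of a fixed point of $\Phi=(\Phi_1,\Phi_2)$.

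The first task is to record the linear building blocks, which are the literal transcriptions, to the $\ell^p$-over-unit-frequency-cubes setting, of the classical $X^{s,b}$ estimates (cf.\ \cite{Bar:02,Bar:03}): the homogeneous bounds $\|\eta(t)U(t)\psi\|_{X^{s,b}_p}\lesssim\|\psi\|_{M^{2,p}_s}$ and $\|\eta(t)U_\alpha(t)\phi\|_{X^{s,b}_{p,\alpha}}\lesssim\|\phi\|_{M^{2,p}_s}$, which hold because the space-time Fourier transform of $U(t)\psi$ is supported on $\{\tau=\xi^3\}$ and $\widehat\eta$ is Schwartz, so the modulation weight $\langle\tau-\xi^3\rangle^b$ is harmless uniformly in $\xi$ and one recovers exactly the $M^{2,p}_s$-norm; the inhomogeneous (Duhamel) estimates $\bigl\|\eta_T(t)\int_0^t U(t-t')F(t')\,dt'\bigr\|_{X^{s,b}_p}\lesssim T^{\theta}\|F\|_{X^{s,b'}_p}$ for some $\theta=\theta(b,b')>0$, and its $U_\alpha$-analogue; and the embedding $X^{s,b}_p\hookrightarrow C(\R;M^{2,p}_s)$ for $b>\tfrac12$, which gives the solution its time-continuity and makes the initial condition meaningful. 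Since none of these manipulations touches the spatial frequency beyond the $M^{2,p}_s$-weight, their proofs are word-for-word those of the $L^2$-based theory.

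The decisive ingredient, and the technical heart of the paper, is the pair of trilinear estimates
\begin{equation*}
\|\partial_x(vw^2)\|_{X^{s,b'}_p}\lesssim\|v\|_{X^{s,b}_p}\,\|w\|_{X^{s,b}_{p,\alpha}}^2,\qquad
\|\partial_x(v^2w)\|_{X^{s,b'}_{p,\alpha}}\lesssim\|v\|_{X^{s,b}_p}^2\,\|w\|_{X^{s,b}_{p,\alpha}},
\end{equation*}
valid for $s>\tfrac14-\tfrac1p$ and $2\le p<\infty$ (with $b,b'$ as above). Granting these, the linear estimates combine to show that $\Phi$ maps the ball of radius $R\sim\|(v_0,w_0)\|_{M^{2,p}_s\times M^{2,p}_s}$ in $X^{s,b}_p(I)\times X^{s,b}_{p,\alpha}(I)$ into itself and is a contraction there once $T=T(R)>0$ is chosen small enough, the contraction factor coming from the $T^{\theta}$ gain and the difference bounds being immediate by multilinearity. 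The unique fixed point yields the solution and the asserted a priori estimate; uniqueness in the ball is automatic, uniqueness in all of $X^{s,b}_p(I)\times X^{s,b}_{p,\alpha}(I)$ follows by the standard iterate-over-subintervals argument, and the (locally Lipschitz, in fact real-analytic) dependence of the solution on $(v_0,w_0)$ comes from the same multilinear estimates.

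The only real obstacle is the trilinear estimate, and it is there that both hypotheses $s>\tfrac14-\tfrac1p$ and $0<\alpha\ne 1$ are used. After decomposing each factor into dyadic modulation pieces and unit frequency cubes and distributing the derivative, one is left with a weighted convolution that must be summed in $\ell^p$ over the cubes — in sharp contrast to the $H^s$ analysis of \cite{Carvajal_and_Panthee}, where $b>\tfrac12$ alone leaves ample room. The mechanism is the algebraic identity relating the output modulation to the three input modulations and the resonance function, which for the second equation is $\xi_1^3+\xi_2^3+\alpha\xi_3^3-\alpha(\xi_1+\xi_2+\xi_3)^3$ (and analogously, mixing the coefficients $1$ and $\alpha$, for the first): because $\alpha\ne 1$ this polynomial degenerates only on a lower-dimensional set, so on the genuinely resonant region at least one modulation must be large. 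The proof then splits into cases according to which of the four quantities — the three input modulations and the resonance — dominates, using bilinear $L^4_{x,t}$-type/Strichartz bounds localized to cubes in the low-modulation regimes and crude $L^2$ bounds powered by the resonance gain in the high-modulation ones, with Hölder's inequality in the cube index converting everything into the required $\ell^p$ sum. The surviving constraint across all cases is exactly $s>\tfrac14-\tfrac1p$: it is the $\alpha\ne 1$ non-resonance, exploited in the cube summation, that buys the $\tfrac1p$ improvement over the threshold $s\ge\tfrac14$ of the $\alpha=1$/single-equation case (cf.\ \cite{Bar:02}), and verifying that every case closes down to this level is the crux of the argument.
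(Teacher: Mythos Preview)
Your proposal is correct and follows essentially the same architecture as the paper: a contraction mapping in $X^{s,b}_p(I)\times X^{s,b}_{p,\alpha}(I)$ built from the homogeneous and inhomogeneous linear estimates (Lemmas~\ref{Lema 2.1}--\ref{Lema 2.2}) together with the trilinear estimates of Proposition~\ref{estimativ bilinear u1u2}, with the $T^{1-b+b'}$ gain supplying the contraction. One technical remark: the paper does not organize the trilinear proof around the mixed-$\alpha$ resonance polynomial you wrote down, but rather uses the pure KdV factorization $-\xi^3-\xi_1^3-\xi_2^3-\xi_3^3=3(\xi_1+\xi_2)(\xi_2+\xi_3)(\xi_3+\xi_1)$ in the resonant case and feeds the $\alpha$-dependence in through $\alpha$-mixed bilinear $L^2_{x,t}$ estimates (Lemma~\ref{lemma alpha difert to 0} and Corollary~\ref{Pi_mnalpha}, with Jacobian $|\xi_2^2-\alpha\xi_1^2|$); the resulting case analysis (Cases~A--D, with Subcases~D1.1--D1.5 and~D2.1--D2.2.2) is substantially more granular than your sketch indicates, and the threshold $s>\tfrac14-\tfrac1p$ is forced specifically by Sub-subcases~D1.5 and~D2.2.
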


\begin{remark}
As noted in \cite{Carvajal_and_Panthee}, the result stated in Theorem \ref{maintheo} holds for $\alpha>1$ as well. This can be justified by using symmetry of the system and scaling $\tilde{v}(x, t) =v(\alpha^{-\frac13}x, t)$ and $\tilde{w}(x, t) =w(\alpha^{-\frac13}x, t)$, so that $(\tilde{u}, \tilde{v})$ satisfy
 \begin{equation*}
\begin{cases}
\p_t\tilde{v}+\frac{1}{\alpha} \p_x^3\tilde{v} + \p_x(\tilde{v}\tilde{w}^2) =0,\\
\p_t\tilde{w} + \p_x^3\tilde{w} + \p_x(\tilde{v}^2\tilde{w}) =0,
\end{cases}
\end{equation*}
with $0<\frac{1}{\alpha} <1$.
\end{remark}

\begin{remark}
The main tool used to prove Theorem \ref{maintheo} is the trilinear estimate established in Proposition \ref{estimativ bilinear u1u2} below. A key source of difficulty in deriving this estimate arises from the fact that the Fourier supports of the solution components  $v$ and $w$ lie on the distinct cubic curves, namely $\tau = \xi^3$ and $\tau = \alpha\xi^3$, respectively. This structural difference significantly complicates the treatment of the resonant interactions, making the analysis fundamentally more involved than in the case of a single equation, where all components share the same dispersion relation (see the resonant Case D in the proof of trilinear estimate \eqref{trilinear estimate_1}, particularly the Sub-subcases D1.2 and D1.5; see also Lemma~\ref{lemma alpha difert to 0} and Corollary \ref{Pi_mnalpha}). In particular, the mismatch between the cubic phases leads to nontrivial interactions in the resonant regime that cannot be treated with the standard multilinear estimates typically employed for single equations. To overcome these difficulties, it becomes necessary to impose a more restrictive condition on the regularity index, namely $s> \frac14-\frac{1}{p}$, rather than the natural threshold $s> \frac14-\frac{3}{2p}$, which would otherwise suffice to recover the sharp local well-posedness threshold  $s>-\frac12$   in the case $p=2$ obtained in \cite{Carvajal_and_Panthee}. This loss reflects the additional complexity introduced by the interaction of components with differing dispersion relations in the resonant regime.
\end{remark}


\section{Function Spaces and basic estimates}


In this section, we introduce the function spaces, their properties and some basic estimates that will be useful throughout the work. We begin by introducing the modulation spaces, originally defined by H. G. Feichtinger in 1983 \cite{Feich-83} as part of an effort to establish smoothness spaces over locally compact Abelian groups. Notably, Feichtinger also developed a parallel theory to the well-known Besov spaces, and modulation spaces have inspired the broader framework of coorbit spaces \cite{Feich-1, Feich-2, Feich-88}. For a historical account of the development of modulation spaces, we refer to an expository paper by the same author \cite{F-06}. In this work, we will employ the modulation space formulation based on an equivalent norm, as presented in \cite{Bar:02}.
\begin{definition}
	Let $\omega>0$ and $\psi^{\omega} \in \mathbb{S}(\R)$ be such that
	\begin{center}
		$supp (\psi^{\omega})\subset [-\frac1{\omega},\frac1{\omega}]$ \,\, and\,\, $\sum_{n\in\mathbb{Z}}\psi^{\omega} (\frac{\xi}{\omega}-\frac{n}{\omega})=1$.
	\end{center}
For given $n\in \mathbb{Z}$ define
	\begin{equation}\label{Projetor L-Pg}
		\widehat{f_n^{\omega}} (\xi):=\widehat{\Pi_n^{\omega} f}(\xi):= \psi^{\omega} \left(\frac{\xi}{\omega}-\frac{n}{\omega}\right)\widehat{f}(\xi),
	\end{equation}
	and for simplicity of exposition denote by $\psi:=\psi^{1}$, $\psi_n(\xi):=\psi(\xi-n)$, $\Pi_n:=\Pi_n^{1}$, and $f_n:=f_n^{1}$.
	
	Given $1\leq r,p\leq \infty$ and $s\in\mathbb{R}$,  the modulation space $M^{r,p}_s(\mathbb{R})$ is defined as the collection of all tempered distributions  
	$f\in \mathcal{S}'(\mathbb{R})$ \,such that\, $\|f\|_{M^{r,p}_s(\mathbb{R})}<\infty$, where the norm
	\,$M^{r,p}_s(\mathbb{R})$\, is defined by 
	\begin{equation}\label{Def_Esp_Modulation}
		\|f\|_{M^{r,p}_s(\mathbb{R})}:=
		\|\langle n\rangle^s\|\psi_n(D)f\|_{L^r_x(\mathbb{R})}\|_{\ell^{p}_n(\mathbb{Z})},
	\end{equation}
	and $\,\psi_n(D)\,$ is the Fourier multiplier operator with the multiplier
	$\psi_n(\xi).$
\end{definition}

\begin{definition}
	Given dyadic $N\geq 1$, we define  the Littlewood-Paley projector $P_N$ onto the (spatial) frequencies $\{|\xi| \sim N\}$ via
	\begin{equation*}
		\widehat{P_N f}(\xi):=\varphi_N(\xi)\widehat{f}(\xi),
	\end{equation*}
	where $\varphi\in\mathcal{S}(\mathbb{R})$, $\varphi_N(\xi):=\varphi(\frac{\xi}{N})$, $supp\,\varphi \subseteq \{1\leq |\xi|\leq 4\}$ and $\varphi=1$ in $\{2\leq |\xi|\leq 3\}$.\\
	Also, for given $n\in \mathbb{Z}$ we define
	\begin{equation}\label{Projetor L-P}
		\widehat{\Pi_n f}(\xi):= \psi_n(\xi)\widehat{f}(\xi),
	\end{equation}
	where $\psi_n(\xi)=\psi(\xi-n)$, in particular if $\xi\in supp\,{\psi}$, then $\xi=n+O(1)$.
\end{definition}
\begin{lemma}\text{({Bernstein inequalities})}
	For each $1\leq q\leq p\leq \infty$, one has
	\begin{equation}\label{Desigual_Bernstein}
		\|P_N f\|_{L^p_x} 
		\lesssim N^{\frac{1}{q}-\frac{1}{p}} \|f\|_{L^q_x},			
	\end{equation}
	\begin{equation}\label{Desigual_Bernstein_1}
		\|\Pi_n f\|_{L^p_x}\lesssim \|f\|_{L^q_x}.
	\end{equation}
\end{lemma}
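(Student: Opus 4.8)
The plan is to deduce both inequalities from the classical observation that a Fourier multiplier with Schwartz symbol is convolution against a Schwartz kernel, after which Young's convolution inequality does the rest. First I would write $P_N f = K_N * f$, where $K_N := \mathcal{F}^{-1}\varphi_N$ and $\varphi_N(\xi)=\varphi(\xi/N)$; by the dilation property of the Fourier transform, $K_N(x) = N\,K(Nx)$ with $K := \mathcal{F}^{-1}\varphi \in \mathcal{S}(\R)$. Given $1\le q\le p\le\infty$, I would pick $r$ through the Young relation $1+\tfrac1p = \tfrac1q+\tfrac1r$, i.e. $\tfrac1r = 1-\tfrac1q+\tfrac1p$; the hypothesis $q\le p$ is exactly what guarantees $r\in[1,\infty]$, so $r$ is admissible. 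Then Young's inequality gives
\[
\|P_N f\|_{L^p_x} = \|K_N * f\|_{L^p_x} \le \|K_N\|_{L^r_x}\,\|f\|_{L^q_x},
\]
and a change of variables yields $\|K_N\|_{L^r_x} = N^{1-\frac1r}\|K\|_{L^r_x} = N^{\frac1q-\frac1p}\|K\|_{L^r_x}$. Since $\varphi\in\mathcal{S}(\R)$ forces $K=\mathcal{F}^{-1}\varphi\in\mathcal{S}(\R)\subset L^r(\R)$ for every $r$, the constant $\|K\|_{L^r_x}$ is finite and independent of $N$, which proves \eqref{Desigual_Bernstein}.

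For \eqref{Desigual_Bernstein_1} I would argue in the same way, noting that $\Pi_n f = \widetilde K_n * f$ with $\widetilde K_n := \mathcal{F}^{-1}\psi_n$ and $\psi_n(\xi)=\psi(\xi-n)$; now the relevant operation is a modulation rather than a dilation, so $\widetilde K_n(x) = e^{inx}\,\mathcal{F}^{-1}\psi(x)$ (up to the normalization constant in the chosen Fourier convention), and in particular $|\widetilde K_n(x)| = |\mathcal{F}^{-1}\psi(x)|$ for all $x$ and all $n$. Choosing $r$ as above, Young's inequality gives
\[
\|\Pi_n f\|_{L^p_x} \le \|\widetilde K_n\|_{L^r_x}\,\|f\|_{L^q_x} = \|\mathcal{F}^{-1}\psi\|_{L^r_x}\,\|f\|_{L^q_x},
\]
and since $\psi\in\mathcal{S}(\R)$ we have $\mathcal{F}^{-1}\psi\in\mathcal{S}(\R)\subset L^r(\R)$, so $\|\mathcal{F}^{-1}\psi\|_{L^r_x}<\infty$ uniformly in $n$. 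This is \eqref{Desigual_Bernstein_1}; note that, unlike in the Littlewood-Paley case, there is no $N$-type gain, precisely because a modulation preserves $L^r$ norms.

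I do not expect a genuine obstacle here: this is the textbook proof of Bernstein's inequality, and the only points requiring a little care are (i) checking that the Young exponent $r$ lies in $[1,\infty]$, which is exactly the assumption $q\le p$, and (ii) bookkeeping the scaling $K_N(x)=NK(Nx)$ that produces the factor $N^{1/q-1/p}$. The endpoint cases are automatically covered: when $q=p$ one has $r=1$ and invokes $\mathcal{F}^{-1}\varphi,\mathcal{F}^{-1}\psi\in L^1(\R)$, while for $p=\infty$ or $q=1$ the corresponding $L^\infty$ or $L^1$ kernel norms are again finite because the kernels are Schwartz functions.
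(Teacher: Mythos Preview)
Your argument is correct and is the standard textbook proof of Bernstein's inequality via Young's convolution inequality. Note, however, that the paper does not actually supply a proof of this lemma: it is stated as a classical fact and left unproved, so there is nothing to compare against. Your write-up would serve perfectly well as the omitted proof; the scaling computation $\|K_N\|_{L^r}=N^{1-1/r}\|K\|_{L^r}=N^{1/q-1/p}\|K\|_{L^r}$ and the modulation observation $|\widetilde K_n|=|\mathcal{F}^{-1}\psi|$ are exactly the two ingredients one expects.
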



\begin{definition}\label{Defi_esp_modula}
	The Bourgain type spaces $X^{s,b}_{p,\alpha}$ adapted to the modulation spaces $M^{2,p}_s(\mathbb{R})$ are defined  with the norm given by 
	\begin{equation}\label{Bourgain spaces adapted}
		\|f\|_{X^{s,b}_{p,\alpha}}:=\left(\sum_{n\in \mathbb{Z}} 
		\langle n \rangle^{sp}
		\|\langle\tau-\alpha\xi^3\rangle^b \widehat{f}(\xi, \tau)\|^p_{L^2_{\tau,\xi}(\mathbb{R}\times[n,n+1])}\right)^{\frac{1}{p}}
		\sim \|\|\Pi_n f\|_{X^{s,b}_\alpha}\|_{\ell^{p}_n}.
	\end{equation}
	For $p=2$, the space $X^{s,b}_{p,\alpha}$ simply reduces to the usual Bourgain's space $X^{s,b}_\alpha$.
\end{definition}


\begin{definition}\label{X^{s,b}_p(I)}
Given a time interval I, we define the local-in-time spaces $X^{s,b}_p(I):=X^{s,b}_p(\mathbb{R}\times I)$ as the collection of functions $u$ such that
\begin{equation*}
	\|u\|_{X^{s,b}_{p,\alpha}(\mathbb{R}\times I)}:=\inf \left\{ \|v\|_{X^{s,b}_{p,\alpha}}: v|_{\mathbb{R}\times I}=u\right\}
\end{equation*}
is finite.

\end{definition}

For the sake of simplicity in exposition we will adopt the notations  $X^{s,b}_{2, \alpha}:=X^{s,b}_{ \alpha}$ and $X^{s,b}_{p, 1}:=X^{s,b}_{ p}$.

In the following lemma we record some properties of the  Bourgain type spaces $X^{s,b}_{p,\alpha}$ adapted to the modulation spaces $M^{2,p}_s(\mathbb{R})$.

\begin{lemma}\label{imersao dos espacos de Bourgain tipo modulacao}
	Let $\alpha, s\in\mathbb{R}$, $\alpha\neq 0$, $b>\frac{1}{2}$ and $p\geq 1$, then
	\begin{equation}\label{Imersion_M_space}
		X^{s,b}_{p,\alpha}\subset C(\mathbb{R}; M^{2,p}_s(\mathbb{R})).
	\end{equation} 
	For $p\geq q\geq 1$, we have
	\begin{equation}\label{xpxq}
		\|u\|_{X^{s,b}_{p,\alpha}}\leq
		\|u\|_{X^{s,b}_{q,\alpha}}.
	\end{equation}
For $p\geq q\geq 1$, it holds that
\begin{equation}\label{estimate X q and p}
	\|P_N u\|_{X^{s,b}_{q,\alpha}}\lesssim N^{\frac{1}{q}-\frac{1}{p}}\|P_N u\|_{X^{s,b}_{p,\alpha}}.
\end{equation}
For $2\leq p< \infty$ and any $\epsilon>0$, the following estimate holds
\begin{equation}\label{N-epsilon}
	\sum_{\begin{smallmatrix}
			N\geq 1\\
			\text{dyadic}
	\end{smallmatrix}} 
	N^{-\epsilon}
	\|P_N u\|_{X^{s,b}_{p,\alpha}}\lesssim  
	\|u\|_{X^{s,b}_{p,\alpha}}.
\end{equation}

\end{lemma}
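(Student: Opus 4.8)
The plan is to establish the four assertions one at a time, each resting on the equivalent norm $\|u\|_{X^{s,b}_{p,\alpha}}\sim\big\|\,\|\Pi_n u\|_{X^{s,b}_{\alpha}}\,\big\|_{\ell^p_n}$ from \eqref{Bourgain spaces adapted} and on the elementary observation that $\langle\xi\rangle\sim\langle n\rangle$ on $\mathrm{supp}\,\psi_n$, so that $\|\Pi_n u\|_{X^{s,b}_{\alpha}}\sim\langle n\rangle^{s}\|\Pi_n u\|_{X^{0,b}_{\alpha}}$. Throughout I abbreviate $c_n:=\langle n\rangle^{s}\|\Pi_n u\|_{X^{0,b}_{\alpha}}$, so that $\|u\|_{X^{s,b}_{p,\alpha}}\sim\|(c_n)_n\|_{\ell^p_n}$.

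For the embedding \eqref{Imersion_M_space} I would first invoke the classical one-parameter statement $X^{0,b}_{\alpha}\hookrightarrow C(\mathbb{R};L^{2}_{x})$ for $b>\tfrac12$, in the quantitative form $\sup_{t}\|g(t)\|_{L^{2}_{x}}\lesssim\|g\|_{X^{0,b}_{\alpha}}$ (obtained by writing $g(t)$ as a $\tau$-superposition of free evolutions and applying Cauchy--Schwarz in $\tau$ using $b>\tfrac12$). Applying this blockwise with $g=\Pi_{n}u$ and then taking the weighted $\ell^{p}_{n}$ norm yields, for every $t$, $\|u(t)\|_{M^{2,p}_{s}}\lesssim\|(c_n)_n\|_{\ell^{p}_{n}}\sim\|u\|_{X^{s,b}_{p,\alpha}}$, hence $u\in L^{\infty}(\mathbb{R};M^{2,p}_{s})$; continuity in $t$ is then obtained by proving it first for $u$ with smooth, compactly supported space--time Fourier transform and passing to the general case by density, the limit being uniform in $t$ thanks to the same blockwise bound together with dominated convergence in $n$. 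This limiting step is the only genuinely delicate point of the lemma; the remaining estimates are soft consequences of Hölder's inequality and the finite overlap of the decompositions $(\Pi_n)$ and $(P_N)$.

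Estimate \eqref{xpxq} is immediate from $\ell^{q}_{n}\hookrightarrow\ell^{p}_{n}$ ($q\le p$) applied to $(c_n)_n$. For \eqref{estimate X q and p}, I would use that $\mathrm{supp}\,\varphi_{N}\subseteq\{N\le|\xi|\le 4N\}$ forces $n\mapsto\|\Pi_{n}P_{N}u\|_{X^{s,b}_{\alpha}}$ to be supported on a set $S_{N}$ with $|S_{N}|\sim N$; Hölder on $S_{N}$ with exponents $p/q$ and $(p/q)'$ then gives $\|a\|_{\ell^{q}(S_{N})}\le|S_{N}|^{\frac1q-\frac1p}\|a\|_{\ell^{p}(S_{N})}$, which is exactly \eqref{estimate X q and p} once $|S_N|\sim N$ is inserted.

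Finally, for \eqref{N-epsilon}: from $\widehat{\Pi_{n}P_{N}u}=\varphi_{N}\,\widehat{\Pi_{n}u}$ with $\|\varphi_{N}\|_{L^{\infty}_{\xi}}=\|\varphi\|_{L^{\infty}_{\xi}}$ one gets $\|\Pi_{n}P_{N}u\|_{X^{0,b}_{\alpha}}\lesssim\|\Pi_{n}u\|_{X^{0,b}_{\alpha}}$, and only $n$ with $|n|\sim N$ contribute, so $\|P_{N}u\|_{X^{s,b}_{p,\alpha}}\lesssim A_{N}:=\big(\sum_{|n|\sim N}c_{n}^{p}\big)^{1/p}$. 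Since each $n$ satisfies $|n|\sim N$ for only $O(1)$ dyadic values of $N$, finite overlap gives $\sum_{N}A_{N}^{p}\lesssim\sum_{n}c_{n}^{p}\sim\|u\|_{X^{s,b}_{p,\alpha}}^{p}$, i.e. $(A_{N})_{N}\in\ell^{p}$ with the correct norm; Hölder in the dyadic parameter then yields
\[
\sum_{\substack{N\ge1\\ \text{dyadic}}}N^{-\epsilon}\|P_{N}u\|_{X^{s,b}_{p,\alpha}}\lesssim\Big(\sum_{\substack{N\ge1\\ \text{dyadic}}}N^{-\epsilon p'}\Big)^{1/p'}\Big(\sum_{N}A_{N}^{p}\Big)^{1/p}\lesssim_{\epsilon}\|u\|_{X^{s,b}_{p,\alpha}},
\]
the last series converging since $\epsilon p'>0$. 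Here I expect no obstacle beyond selecting the Hölder pairing $(p,p')$ in the dyadic sum correctly; the hypothesis $2\le p<\infty$ is only used to keep $p'$ finite (and for consistency with the rest of the paper).
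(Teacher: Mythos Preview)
Your proposal is correct and follows essentially the same route as the paper: the paper dismisses \eqref{Imersion_M_space} as ``similar to the proof used in standard Bourgain's spaces,'' reduces \eqref{xpxq} to the embedding $\ell^{q}\hookrightarrow\ell^{p}$, and proves \eqref{estimate X q and p} by H\"older on the $O(N)$ indices $n$ in the support of $P_N$ with $\tfrac1q=\tfrac1p+\tfrac1r$, exactly as you do. The paper actually gives no argument for \eqref{N-epsilon}; your finite-overlap/H\"older-in-the-dyadic-parameter proof fills that gap cleanly (one minor remark: the condition $2\le p<\infty$ is really used to keep $p<\infty$ so that the $\ell^p$ bound $\sum_N A_N^p\lesssim\sum_n c_n^p$ makes sense, rather than to keep $p'$ finite).
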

\begin{proof}

The proof of \eqref{Imersion_M_space} is similar to the proof used in standard Bourgain's spaces. To prove \eqref{xpxq}, we can use the embedding $\ell^q(\mathbb{Z})\subset \ell^p(\mathbb{Z})$ for $1\leq q\leq p$.

To prove \eqref{estimate X q and p}, we use the definition of $X^{s,b}_{q,\alpha}$-norm. In fact, applying H\"older's inequality with $\frac{1}{q}=\frac{1}{p}+\frac{1}{r}$, we obtain
\begin{equation*}
	\begin{split}
		\|P_N u\|_{X^{s,b}_{q, \alpha} }
		& = \left (\sum_{n\in\mathbb{Z}} 
		\langle n\rangle^{sq} \|\langle \tau-\alpha\xi^3\rangle^b \widehat{P_N u}\|^q_{L^2_{\tau,\xi}(\mathbb{R},[n,n+1])}
		\right)^{\frac{1}{q}}\\
		& = \{\sum_{c_1N\leq n \leq c_2N} 
		{ (c_n.1)}^q \}^{\frac{1}{q}}\\
		& \leq \{\sum_{c_1N\leq n \leq c_2N} {c_n}^p\}^{\frac{1}{p}} 
		\{\sum_{c_1N\leq n \leq c_2N} 1^r\}
		^{\frac{1}{r}}\\
		& \sim N^{\frac{1}{r}} 
		\|P_N u\|_{X^{s,b}_{p, \alpha}}\\
		& = N^{\frac{1}{q}-\frac{1}{p}} \|P_N u\|_{X^{s,b}_{p, \alpha}},
	\end{split}
\end{equation*}
where $c_n=\langle n\rangle^{s} \|\langle \tau-\alpha\xi^3\rangle^b \widehat{P_N u}\|_{L^2_{\tau,\xi}(\mathbb{R},[n,n+1])}$.
\end{proof}

\begin{lemma}
Assume that $f,f_1,f_2$ and $f_3$ belong to Schwartz space in $\mathbb{R}^2$. Then we have
$$\int_{\mathbb{R}^6}\overline{\widehat{f}(\xi,\tau)}
\widehat{f_1}(\xi_1,\tau_1)\widehat{f_2}(\xi_2,\tau_2)\widehat{f_3}(\xi_3,\tau_3)
d\xi_1 d\tau_1 d\xi_2 d\tau_2 d\xi d\tau
=\int_{\mathbb{R}^2}\overline{f} f_1 f_2 f_3(x,t)dxdt,$$ 
where \,$\xi_3=\xi-\xi_1-\xi_2$ \,and\, $\tau_3=\tau-\tau_1-\tau_2$, i.e. $\xi_1+\xi_2+\xi_3=\xi$ \,and\, $\tau_1+\tau_2+\tau_3=\tau$.
\end{lemma}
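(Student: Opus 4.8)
The plan is to reduce the identity to the elementary multiplicative–convolutive duality of the space–time Fourier transform on $\mathbb{R}^2$, combined with Parseval's formula; no special structure of the symbols is needed, and everything is legitimate because all functions are Schwartz.

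First I would recall that under the (space–time) Fourier transform the product of functions passes to a convolution of their transforms, so that
$$
\widehat{f_1 f_2 f_3}(\xi,\tau)
= \int_{\mathbb{R}^4} \widehat{f_1}(\xi_1,\tau_1)\,\widehat{f_2}(\xi_2,\tau_2)\,
\widehat{f_3}(\xi-\xi_1-\xi_2,\,\tau-\tau_1-\tau_2)\, d\xi_1\, d\tau_1\, d\xi_2\, d\tau_2,
$$
with the normalization of $\widehat{\cdot}$ fixed as in the rest of the paper. This is obtained by writing each $f_j$ through Fourier inversion, multiplying the three resulting exponentials, and interchanging the order of integration; the interchange is justified since the integrand is Schwartz, hence absolutely integrable, in all variables. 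One then invokes Parseval's identity on $\mathbb{R}^2$,
$$
\int_{\mathbb{R}^2} \overline{f(x,t)}\, g(x,t)\, dx\, dt
= \int_{\mathbb{R}^2} \overline{\widehat{f}(\xi,\tau)}\, \widehat{g}(\xi,\tau)\, d\xi\, d\tau,
$$
applied with $g = f_1 f_2 f_3$, which is again Schwartz. Substituting the convolution formula for $\widehat{f_1 f_2 f_3}$ into the right-hand side, relabelling $\xi_3 := \xi - \xi_1 - \xi_2$ and $\tau_3 := \tau - \tau_1 - \tau_2$, and renaming the six integration variables produces exactly the claimed identity.

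The only point requiring (entirely routine) care is bookkeeping: checking that the Fourier-normalization constants appearing in the convolution identity and in Parseval's formula are consistent with the convention used throughout the paper, and confirming that each application of Fubini's theorem is legitimate. Both are immediate from the Schwartz hypothesis, so there is no genuine analytic obstacle; the lemma simply records the Fourier-side rewriting that will later be used to reduce the trilinear estimates to bounds on the associated multipliers.
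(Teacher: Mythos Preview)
Your argument is correct and is essentially the same as the paper's own proof, which simply cites Plancherel's identity together with the convolution theorem for the Fourier transform. There is no meaningful difference in approach.
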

\begin{proof}
This lemma easily follows using the Plancherel's identity and the convolution theorem for the Fourier transform (see Lemma 7 in \cite{Carvajal_3}).
\end{proof}

\begin{definition}
We say that $f=g+O(1)$\, if and only if \,$|f-g|\lesssim 1$.
\end{definition}

\begin{proposition}\label{O(1)}
{\bf  i)} Let $|x|\gg 1$, \,and\, $x=n+O(1)$, then
$$|x|\sim |n|\gg 1.$$
{\bf  ii)} Let $a,b\in\mathbb{R}$ such that $ab\neq 0$. If 
$
|a-b|\leq |a+b|
$
then one has $ab>0$ and $|a+b|=|a|+|b|$.
\\
{\bf  iii)} Let $a,b\in\mathbb{R}$, then
\begin{equation}\label{O(2)}
	\max\{|a-b|,|a+b|\}=|a|+|b|,
\end{equation}
and
\begin{equation*}
	\max\{|a|,|b|\}=\frac{|a+b|}{2}+\frac{|a-b|}{2}.
\end{equation*}
\end{proposition}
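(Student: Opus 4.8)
The plan is to dispatch the three items by elementary arithmetic, handling each in turn. For \textbf{(i)}, I would first make the notation explicit: $x = n + O(1)$ means $|x-n| \le C$ for an absolute constant $C$, while $|x| \gg 1$ means $|x| \ge C_0$ for a suitably large constant $C_0$, which we may take with $C_0 \ge 2C$. The triangle inequality then gives $|n| \le |x| + |x-n| \le |x| + C \le 2|x|$ and $|n| \ge |x| - |x-n| \ge |x| - C \ge \tfrac12 |x|$, so $\tfrac12 |x| \le |n| \le 2|x|$, which is exactly $|n| \sim |x|$; together with $|n| \ge \tfrac12 |x| \gg 1$ this proves the assertion. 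For \textbf{(ii)}, since both sides of the hypothesis are nonnegative, squaring yields the equivalent inequality $(a-b)^2 \le (a+b)^2$, i.e.\ $-4ab \le 0$, i.e.\ $ab \ge 0$; combined with $ab \ne 0$ this forces $ab > 0$, so $a$ and $b$ have the same sign, and then $|a+b| = |a| + |b|$ whether both are positive ($|a+b| = a+b$) or both negative ($|a+b| = -a-b$).

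For \textbf{(iii)}, the first identity I would obtain from (ii) by splitting on which term is larger. If $ab \ne 0$: when $|a-b| \le |a+b|$, part (ii) gives $\max\{|a-b|,|a+b|\} = |a+b| = |a| + |b|$; when $|a+b| \le |a-b|$, I apply part (ii) with $b$ replaced by $-b$ (legitimate since $a(-b) \ne 0$ and $|a-(-b)| \le |a+(-b)|$), obtaining $\max = |a-b| = |a| + |{-b}| = |a| + |b|$. In the degenerate case $ab = 0$, say $b = 0$ (the case $a=0$ is symmetric), the identity is immediate from $\max\{|a|,|a|\} = |a| = |a| + |b|$. For the second identity I would exploit that both $|a+b|$ and $|a-b|$ are invariant under the interchange $a \leftrightarrow b$ and under $(a,b) \mapsto (-a,-b)$; hence we may assume $a = |a| \ge |b| \ge 0$, in which case $a+b \ge 0$ and $a-b \ge 0$, so $\tfrac12|a+b| + \tfrac12|a-b| = \tfrac12\bigl((a+b)+(a-b)\bigr) = a = \max\{|a|,|b|\}$.

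There is no real obstacle: the proposition is a bookkeeping lemma collecting arithmetic facts that get used repeatedly in the resonance computations later on. The only points that call for a little care are pinning down the implicit constants behind the $O(1)$ and $\gg$ symbols in (i) so that the comparison $|n| \sim |x|$ is quantitatively justified, and noticing that part (ii) does not literally apply to the degenerate cases $ab = 0$ appearing in (iii), where one must check the (trivial) conclusion by hand.
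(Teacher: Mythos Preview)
Your proof is correct and complete. The paper itself does not give a proof of this proposition but simply refers the reader to Propositions~3.7--3.9 of \cite{Bar:03}; your direct elementary argument (triangle inequality for (i), squaring for (ii), case splitting and symmetry reduction for (iii)) is exactly the sort of verification one would expect, and is in fact more self-contained than what the paper provides.
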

\begin{proof}
See propositions 3.7, 3.8 and 3.9 in \cite{Bar:03}.
\end{proof}

\begin{proposition}\label{sumatorio lp_lp'}
{\bf 1.} \,Let $a,b\in\mathbb{Z}$, and $c_1+c_2>1$. Then, we have
\begin{equation}\label{serie1}
	S(a,b):=\sum_{\begin{smallmatrix}
			n\in\mathbb{Z}\\n \neq a,b
	\end{smallmatrix}}\frac{1}{|n-a|^{c_1}|n-b|^{c_2}}
	< C < \infty,
\end{equation}
where C is independent of $a$ and $b$.
\\
{\bf 2.} \,Let $r\in \R$, $p>1$. Then, one has
\begin{equation}\label{sumcomreal}
\sum_{\begin{smallmatrix}
			n\in\mathbb{Z}\\|n- r|>1
	\end{smallmatrix}}
\frac{1}{|n-r|^{p}}
	< C < \infty,
\end{equation}
where C is independent of $r$.
\\
{\bf 3.} \,Let $\frac{1}{p}+\frac{1}{p'}=1$, $p\geq 1$. Then, for each $\epsilon>0$, the following estimate holds
\begin{equation}\label{sumatorio lp_lp' enunciado}
	\sum_{\begin{smallmatrix}
			m,n\in\mathbb{Z}\\m\neq n
	\end{smallmatrix}}
	\frac{a_m b_n}{|m-n|\langle n\rangle^\epsilon}
	\leq c_\epsilon \|a_m \|_{\ell^p(\mathbb{Z})}\|b_{n}
	\|_{\ell^{p'}(\mathbb{Z})}.
\end{equation} 
\\
{\bf 4.} \,Let $M\gg1$ and $\kappa>0$. Then
\begin{equation}\label{sumkM}
	\sum_{|n| \sim M}
	\frac{1}{|n|^{\kappa}}
	\lesssim\frac{1}{M^{\kappa-1}}.
\end{equation} 
\end{proposition}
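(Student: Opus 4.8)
The plan is to establish the four parts separately, as they are independent elementary summation estimates. Parts \textbf{2} and \textbf{4} are immediate counting arguments: in \textbf{4}, the index set $\{n\in\Z:|n|\sim M\}$ contains $O(M)$ integers, each contributing $|n|^{-\kappa}\sim M^{-\kappa}$, so the sum is $O(M^{1-\kappa})$; in \textbf{2}, for each integer $k\ge1$ there are at most two integers $n$ with $k<|n-r|\le k+1$, whence $\sum_{|n-r|>1}|n-r|^{-p}\le 2\sum_{k\ge1}k^{-p}<\infty$, the bound being independent of $r$ since $p>1$.

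For \textbf{1}, I would split the sum over $n\in\Z\setminus\{a,b\}$ according to whether $|n-a|\le|n-b|$ or $|n-a|>|n-b|$. On the first set, $c_2\ge0$ and $|n-b|\ge|n-a|$ give $|n-b|^{-c_2}\le|n-a|^{-c_2}$, so the summand is $\le|n-a|^{-(c_1+c_2)}$; since $a\in\Z$ and $n\neq a$ force $|n-a|\ge1$, this contribution is $\le 2\sum_{k\ge1}k^{-(c_1+c_2)}<\infty$ because $c_1+c_2>1$. The second set is symmetric, so $S(a,b)$ is bounded by a constant depending only on $c_1+c_2$, uniformly in $a,b$.

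The substance of the proposition is \textbf{3}, which is also where the naive approach fails: bounding $|m-n|^{-1}$ directly and applying Young's inequality is impossible since $(|k|^{-1})_{k\neq0}\notin\ell^1(\Z)$, so the weight $\langle n\rangle^{-\epsilon}$ must be used. I would split the sum at the scale $|m-n|\sim\langle n\rangle$. On $\{|m-n|\le\langle n\rangle\}$ we have $\langle n\rangle^{\epsilon}\ge|m-n|^{\epsilon}$, hence $\tfrac{1}{|m-n|\langle n\rangle^{\epsilon}}\le\tfrac{1}{|m-n|^{1+\epsilon}}$, and as $(|k|^{-1-\epsilon})_{k\neq0}\in\ell^1(\Z)$, Young's inequality gives a bound $\lesssim_{\epsilon}\|a_m\|_{\ell^p}\|b_n\|_{\ell^{p'}}$. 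On $\{|m-n|>\langle n\rangle\}$ one has $|n|\le\langle n\rangle<|m-n|$, so $|m|\le|m-n|+|n|<2|m-n|$ and therefore $|m-n|\gtrsim\max\{|m|,\langle n\rangle\}$ (in particular $m\neq0$); writing $|m-n|=|m-n|^{\theta}|m-n|^{1-\theta}$ yields $\tfrac{1}{|m-n|\langle n\rangle^{\epsilon}}\lesssim\tfrac{1}{|m|^{\theta}\langle n\rangle^{1-\theta+\epsilon}}$, and Hölder's inequality then bounds this part by $\|a_m\|_{\ell^p}\,\big\||m|^{-\theta}\big\|_{\ell^{p'}}\,\|b_n\|_{\ell^{p'}}\,\big\|\langle n\rangle^{-(1-\theta+\epsilon)}\big\|_{\ell^p}$. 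The last two factors are finite provided $\theta p'>1$ and $(1-\theta+\epsilon)p>1$, i.e. $\tfrac{1}{p'}<\theta<\tfrac{1}{p'}+\epsilon$, and such a $\theta\in[0,1]$ exists for every $\epsilon>0$ and $1\le p<\infty$ (after harmlessly shrinking $\epsilon$). Locating the correct splitting scale $|m-n|\sim\langle n\rangle$ and the admissible window for $\theta$ — so that the $\langle n\rangle^{-\epsilon}$ weight just beats the logarithmic failure of the endpoint convolution estimate — is the only genuinely nontrivial point; parts \textbf{1}, \textbf{2}, \textbf{4} are routine.
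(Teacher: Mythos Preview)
Your proposal is correct and aligns with the paper's own argument, which is extremely terse: the paper simply asserts that \eqref{serie1} follows from convergence of the $p$-series, that \eqref{sumatorio lp_lp' enunciado} ``follows by applying H\"older's and Young's inequalities,'' proves \eqref{sumcomreal} by reducing to the integer case via $r\in[n_r,n_r+1)$, and does not treat \eqref{sumkM} at all. Your splitting at $|m-n|\sim\langle n\rangle$ in Part~\textbf{3} is exactly the mechanism that makes ``H\"older and Young'' work here and is the natural way to fill in the paper's one-line sketch; the remaining parts are routine, and your alternative shell-counting argument for Part~\textbf{2} is an equally valid route to the same bound.
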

\begin{proof}
	The inequality \eqref{serie1} follows from the convergence of the series $\sum_{n\in \Z^+}  {\frac{1}{n^p}}$ for $p>1$ and  \eqref{sumatorio lp_lp' enunciado} follows by applying Hölder’s and Young’s inequalities.
In order to prove \eqref{sumcomreal}, first note that if $r\in \Z$ the inequality is trivial. Therefore, we consider the case when $r\notin \Z$ and proceed as follows. Since $\R=\cup_{n\in \Z} [n, n+1)$,  there exist $n_r \in \Z$ such that $r\in [n_r, n_r+1)$. Thus,
\begin{equation*}
\begin{split}
\sum_{\begin{smallmatrix}
			n\in\mathbb{Z}\\|n- r|>1
	\end{smallmatrix}}
\frac{1}{|n-r|^{p}} &= \sum_{\begin{smallmatrix}
			n\in\mathbb{Z}\\ n-r>1\end{smallmatrix}} \frac1{(n-r)^p}+\sum_{\begin{smallmatrix}
			n\in\mathbb{Z}\\ r-n>1\end{smallmatrix}} \frac1{(r-n)^p}\\
&\leq \sum_{n> n_r+1} \frac1{(n-n_r-1)^p}+\sum_{n<n_r} \frac1{(n_r-n)^p}\\
&\lesssim \sum_{\begin{smallmatrix}
			n\in\mathbb{Z}\\|n|\geq1
	\end{smallmatrix}} \frac{1}{|n|^p}=C<\infty.
\end{split}
\end{equation*}
\end{proof}

\section{Local Well-posedness for the mKdV type System}\label{chapter 3}

This section is devoted to derive linear and multilinear estimates that play crucial role in the proof of the local well-posedness result. We start with the linear estimates.

\subsection{Linear estimates}
The following lemmas provide the homogeneous and non-homogeneous linear estimates whose  proofs are very similar to those commonly known in the classical Bourgain's spaces, see for example \cite{CKSTT}, \cite{Ginibre}, \cite{Tao} and references therein. So, we only state them without detailed proof.
\begin{lemma}\label{Lema 2.1}[Homogeneous Linear Estimates]
	Let $f\in M^{2,p}_s(\mathbb{R})$, $\eta\in C^\infty_c(\mathbb{R})$ be a cut-off function, and let  $\{S_\lambda(t)\}_{t\in\mathbb{R}}$ be the unitary
	group associated with the IVP (\ref{PVI_Majda-Biello equation}) given by $\widehat{S_\lambda(t)f}(\xi)=e^{-it \lambda \xi^3}\widehat{f}(\xi)$. For any $s,b\in\mathbb{R}$, $1\leq p< \infty$, $\lambda\in \R$  and $0< T\leq 1$, the following estimate hold
	\begin{equation}\label{Est_lineal_2}   
		\|\eta(t) S_\lambda(t)f\|_{X^{s,b}_{p,\alpha}([0,T])}
		\lesssim \|f\|_{M^{2,p}_s(\mathbb{R})}.
	\end{equation}
\end{lemma}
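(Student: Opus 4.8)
The statement to prove is the homogeneous linear estimate \eqref{Est_lineal_2}, namely that the free evolution $S_\lambda(t)f$, truncated by a smooth cutoff $\eta(t)$, is controlled in the modulation-adapted Bourgain space $X^{s,b}_{p,\alpha}$ by the $M^{2,p}_s$-norm of the data. The guiding principle is that, after the frequency projection $\Pi_n$ built into the $\ell^p_n$-summation in \eqref{Bourgain spaces adapted}, the problem decouples into the classical $X^{s,b}_\alpha$ (i.e. $p=2$) homogeneous estimate applied frequency-block by frequency-block, and then one simply reassembles the $\ell^p_n$ sum. So the first step is to write, using the equivalent norm in \eqref{Bourgain spaces adapted},
\[
\|\eta(t)S_\lambda(t)f\|_{X^{s,b}_{p,\alpha}}\sim \big\|\,\|\Pi_n(\eta(t)S_\lambda(t)f)\|_{X^{s,b}_\alpha}\,\big\|_{\ell^p_n}.
\]
Because $\Pi_n$ is a Fourier multiplier in $x$ only and $S_\lambda(t)$ is a Fourier multiplier in $x$ only, they commute, so $\Pi_n(\eta(t)S_\lambda(t)f)=\eta(t)S_\lambda(t)(\Pi_n f)$.

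**Reduction to the classical case.** The second step is to invoke the standard homogeneous linear estimate in the usual Bourgain space: for any $s,b\in\mathbb R$, any $\lambda\ne 0$, and $\eta\in C_c^\infty$, one has $\|\eta(t)S_\lambda(t)g\|_{X^{s,b}_\alpha}\lesssim \|g\|_{H^s}$, with implicit constant depending on $\eta$, $b$, $\lambda$ but not on $g$. (The case $\alpha$ and $\lambda$ are both allowed; actually only the dispersion symbol matters, and $\langle\tau-\alpha\xi^3\rangle$ is the weight in $X^{s,b}_\alpha$ while $S_\lambda$ carries $e^{-it\lambda\xi^3}$ — one needs to keep track that the relevant estimate is really $\|\eta S_\lambda g\|_{X^{s,b}_\alpha}\lesssim \|g\|_{H^{s}}$, which holds because $S_\lambda(t)g$ has space-time Fourier transform $\widehat g(\xi)\,\widehat\eta(\tau+\lambda\xi^3)$ up to constants, and $\langle \tau - \alpha\xi^3\rangle \le \langle\tau+\lambda\xi^3\rangle + \langle(\lambda+\alpha)\xi^3\rangle$ — so for $\lambda\ne-\alpha$ this loses a polynomial weight in $\xi$ which is harmless when applied only after the projection $\Pi_n$, since on the support of $\psi_n$ one has $\langle\xi\rangle\sim\langle n\rangle$. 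This is the one genuinely non-cosmetic point and I return to it below.) Applying this with $g=\Pi_n f$ gives
\[
\|\Pi_n(\eta(t)S_\lambda(t)f)\|_{X^{s,b}_\alpha}\lesssim \|\Pi_n f\|_{H^{s'}} \sim \langle n\rangle^{s'}\|\Pi_n f\|_{L^2_x}
\]
for an appropriate $s'$ (equal to $s$ in the benign case $\lambda=-\alpha$, and $s$ plus the number of derivatives absorbed in the $\langle\xi\rangle$-weight otherwise, which is still $\sim\langle n\rangle^{s}$ times a constant after using $\langle\xi\rangle\sim\langle n\rangle$ on $\mathrm{supp}\,\psi_n$). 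Taking the $\ell^p_n$ norm of both sides and recognizing the right-hand side as exactly $\|f\|_{M^{2,p}_s}$ by \eqref{Def_Esp_Modulation} closes the argument.

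**The main obstacle.** The only subtlety — and the step I would be most careful about — is the interplay between the group $S_\lambda$ (symbol $e^{-it\lambda\xi^3}$) and the weight $\langle\tau-\alpha\xi^3\rangle^b$ defining $X^{s,b}_{p,\alpha}$, since in the system \eqref{PVI_Majda-Biello equation} we apply this lemma both with $\lambda=1$ against the space $X^{s,b}_p=X^{s,b}_{p,1}$ (matched) and with $\lambda=\alpha$ against $X^{s,b}_{p,\alpha}$ (matched), so in the intended applications $\lambda$ and the weight parameter coincide and no derivative loss occurs; the clean statement with general $\lambda$ is then a bonus that costs at most a fixed power of $\langle n\rangle$ after projection. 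I would therefore first prove the matched case $\lambda=\alpha$ directly: $\widehat{\eta(t)S_\alpha(t)f}(\xi,\tau)=\widehat f(\xi)\,\widehat\eta(\tau+\alpha\xi^3)$ (up to a $2\pi$), so that $\langle\tau-\alpha\xi^3\rangle$ evaluated on this function, after the shift $\tau\mapsto\tau-\alpha\xi^3$, becomes $\langle\tau\rangle$, and the $\tau$-integral $\int\langle\tau\rangle^{2b}|\widehat\eta(\tau)|^2\,d\tau$ is finite because $\eta\in\mathcal S$; this reduces the whole $\tau$-dependence to a constant $C(\eta,b)$ and leaves precisely $\|\langle n\rangle^s\|\Pi_n f\|_{L^2_x}\|_{\ell^p_n}=\|f\|_{M^{2,p}_s}$. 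The harmless mismatch case is handled by the triangle inequality on the exponents as indicated. No blow-up in $T$ appears since $0<T\le 1$ and $\eta$ is fixed; localization to $[0,T]$ only makes the infimum in Definition \ref{X^{s,b}_p(I)} smaller, so the estimate on $\mathbb R$ implies the one on $[0,T]$.
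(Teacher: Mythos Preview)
Your argument is correct and is precisely the standard one the paper alludes to (the paper omits the proof entirely, referring to the classical Bourgain-space literature). The decoupling $\|\eta S_\lambda f\|_{X^{s,b}_{p,\alpha}}\sim\|\,\|\eta S_\lambda(\Pi_n f)\|_{X^{s,b}_\alpha}\,\|_{\ell^p_n}$ via commutation of $\Pi_n$ with the Fourier multiplier $S_\lambda$, followed by the explicit computation $\mathcal F_{x,t}[\eta S_\alpha g](\xi,\tau)=\widehat g(\xi)\widehat\eta(\tau+\alpha\xi^3)$ so that the $\tau$-integral factors out as $\int\langle\tau\rangle^{2b}|\widehat\eta(\tau)|^2\,d\tau<\infty$, is exactly the mechanism.

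One correction on a side remark: in the mismatched case $\lambda\ne\alpha$ (with the sign convention in the lemma, $\lambda\ne-\alpha$), the extra weight $\langle(\lambda+\alpha)\xi^3\rangle^b\sim\langle n\rangle^{3b}$ is \emph{not} harmless for $b>0$: it raises the regularity on the right from $s$ to $s+3b$, so the estimate as literally printed (arbitrary $\lambda\in\mathbb R$, fixed $\alpha$ in the space) fails. This is a typo in the paper rather than a defect in your proof---the space index should match the group, as it does in every actual use of the lemma in Section~4 ($S_1$ with $X^{s,b}_p$ and $S_\alpha$ with $X^{s,b}_{p,\alpha}$). Your proof of the matched case is complete and is all that is needed.
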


\begin{lemma}\label{Lema 2.2}[Non-Homogeneous Linear Estimates]
	Let $F\in X^{s,b'}_p(I)$, $\eta_T\in C^\infty_c(\mathbb{R})$ be a cut-off function, and let  $\{S_\lambda(t)\}_{t\in\mathbb{R}}$ be the unitary
	groups associated with the IVP (\ref{PVI_Majda-Biello equation}). For any $s,b,b'\in\mathbb{R}$ such that $-1/2<b'\leq 0\leq b\leq b'+1$, $1\leq p <\infty$, $\lambda\in \R$, and $I=[0,T]$ with $0<T \leq 1$, the following estimate hold
	\begin{equation}\label{estimativ_lineales_noh}
		\|\eta_T(t) \int_0^t S_{\lambda}(t-t')F(t')dt'\|_{X^{s,b}_p(I)}
		\lesssim T^{1-b+b'}\|F\|_{X^{s,b'}_p(I)}.
	\end{equation}
\end{lemma}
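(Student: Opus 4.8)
The plan is to reduce the estimate to the classical non-homogeneous linear estimate in the ordinary Bourgain space $X^{s,b}_\lambda$ (the case $p=2$) and then to reassemble the $\ell^p_n$ structure of $X^{s,b}_{p,\lambda}$. Write $\mathcal{D}G(t):=\int_0^t S_\lambda(t-t')G(t')\,dt'$ for the Duhamel operator, and recall $\eta_T=\eta(\cdot/T)$ for a fixed $\eta\in C^\infty_c(\mathbb{R})$ equal to $1$ near the origin. Everything rests on the equivalence \eqref{Bourgain spaces adapted}, namely $\|g\|_{X^{s,b}_{p,\lambda}}\sim\big\|\,\|\Pi_n g\|_{X^{s,b}_\lambda}\,\big\|_{\ell^p_n}$, together with the elementary fact that the frequency projector $\Pi_n$ (a Fourier multiplier in the space variable) commutes with multiplication by the time cut-off $\eta_T$, with the propagator $S_\lambda(t)$ and with $\mathcal{D}$; consequently
\begin{equation*}
\Pi_n\big(\eta_T\,\mathcal{D}G\big)=\eta_T\,\mathcal{D}(\Pi_n G),\qquad n\in\mathbb{Z},
\end{equation*}
for any space-time function $G$.

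First I would dispose of the local-in-time formulation: given $F\in X^{s,b'}_p(I)$, choose an extension $\widetilde F$ to $\mathbb{R}\times\mathbb{R}$ with $\|\widetilde F\|_{X^{s,b'}_{p,\lambda}}\le2\|F\|_{X^{s,b'}_p(I)}$. Since for $t\in I$ the value $(\mathcal{D}\widetilde F)(t)$ depends only on $\widetilde F|_{[0,t]}=F|_{[0,t]}$, the global function $\eta_T\,\mathcal{D}\widetilde F$ extends $\eta_T\,\mathcal{D}F$, so by the definition of the local norm it suffices to bound $\|\eta_T\,\mathcal{D}\widetilde F\|_{X^{s,b}_{p,\lambda}}$. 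By \eqref{Bourgain spaces adapted} and the commutation identity above, this quantity is $\sim\big\|\,\|\eta_T\,\mathcal{D}(\Pi_n\widetilde F)\|_{X^{s,b}_\lambda}\,\big\|_{\ell^p_n}$. For each fixed $n$ one applies, with $g=\Pi_n\widetilde F$, the scalar non-homogeneous estimate (valid for $-\tfrac12<b'\le0\le b\le b'+1$)
\begin{equation*}
\|\eta_T\,\mathcal{D}g\|_{X^{s,b}_\lambda}\lesssim T^{\,1-b+b'}\,\|g\|_{X^{s,b'}_\lambda},
\end{equation*}
whose implied constant depends only on $s,b,b',\lambda$ and $\eta$, hence is uniform in $n$. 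Taking the $\ell^p_n$-norm and using \eqref{Bourgain spaces adapted} once more yields $\|\eta_T\,\mathcal{D}\widetilde F\|_{X^{s,b}_{p,\lambda}}\lesssim T^{1-b+b'}\|\widetilde F\|_{X^{s,b'}_{p,\lambda}}\lesssim T^{1-b+b'}\|F\|_{X^{s,b'}_p(I)}$, which is the assertion; for $\lambda=1$ one has $X^{s,b}_{p,1}=X^{s,b}_p$, while $\lambda=\alpha$ provides the estimate in $X^{s,b}_{p,\alpha}(I)$ used for the $w$-equation.

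The remaining ingredient, the scalar estimate just displayed, is classical and I would simply cite \cite{CKSTT}, \cite{Ginibre} and \cite{Tao}: conjugating by $S_\lambda(t)$ passes to the Fourier side, after which one splits the modulation weight into the regions $\langle\tau-\lambda\xi^3\rangle\lesssim T^{-1}$ and $\langle\tau-\lambda\xi^3\rangle\gg T^{-1}$, Taylor-expands the time exponential on the first region while using its decay on the second, and in each case the time rescaling built into $\eta_T=\eta(\cdot/T)$ produces the gain $T^{1-b+b'}$ (recall $1-b+b'\ge0$ and $0<T\le1$, with genuine smallness only when $b<b'+1$, which is what the later contraction argument requires). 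I do not expect a real obstacle here: the only points needing (routine) verification are the commutation of $\Pi_n$ with $\mathcal{D}$ and with the time cut-off, the $n$-independence of the scalar constant, and the passage through the infimum defining the local norm --- the analytic work is entirely contained in the cited classical estimate.
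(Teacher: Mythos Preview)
Your proposal is correct and matches the paper's approach: the paper does not give a detailed proof at all but simply remarks that the argument is ``very similar to those commonly known in the classical Bourgain's spaces'' and cites \cite{CKSTT}, \cite{Ginibre}, \cite{Tao}. Your write-up makes explicit exactly the reduction the paper has in mind---commuting $\Pi_n$ through the Duhamel operator and time cut-off, applying the classical $X^{s,b}_\lambda$ estimate uniformly in $n$, and reassembling via the $\ell^p_n$ equivalence \eqref{Bourgain spaces adapted}---so you are filling in details the authors deliberately omitted rather than diverging from them.
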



\subsection{Bilinear estimates }  In what follows, we record some bilinear estimates that will be useful throughout this work.
\begin{definition}
	For given $\theta>0$, let $I^\theta:=(-\partial^2_x)^{\frac{\theta}{2}}$ be the Riesz potential of order $-\theta$, and let $I^\theta_-$ be defined by
	\begin{equation*}
		\mathcal{F}_x(I^\theta_-(f,g))(\xi):=\int_{\xi=\xi_1+\xi_2}
		|\xi_1-\xi_2|^\theta \widehat{f}(\xi_1) \widehat{g}(\xi_2)\,d\xi_1.
	\end{equation*}
\end{definition}

\begin{lemma}\label{Op de Riesz}
	Let $I^{\frac{1}{2}}$ and $I^\frac{1}{2}_-$ be the operators as defined above (with $\theta=\frac{1}{2}$). Then the following estimate holds
	\begin{equation*}
		\|I^{\frac{1}{2}} I^\frac{1}{2}_-(u_1,u_2)\|_{L^2_{x,t}}
		\lesssim \|u_1\|_{X^{0,\frac{1}{2}+}} \|u_2\|_{X^{0,\frac{1}{2}+}}.
	\end{equation*}
\end{lemma}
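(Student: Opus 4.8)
The plan is to reduce the estimate to a known $L^4$--Strichartz-type bound together with a bilinear smoothing estimate of Kenig--Ponce--Vega type. First I would pass to the Fourier side: writing $u_j$ via their space-time Fourier transforms and using the previous lemma on the Fourier representation of products, the quantity $\|I^{1/2}I^{1/2}_-(u_1,u_2)\|_{L^2_{x,t}}$ becomes, after Plancherel, an integral over $\xi=\xi_1+\xi_2$, $\tau=\tau_1+\tau_2$ of $|\xi|^{1/2}|\xi_1-\xi_2|^{1/2}\,\widehat{u_1}(\xi_1,\tau_1)\,\widehat{u_2}(\xi_2,\tau_2)$, whose $L^2_{\xi,\tau}$ norm we must dominate by the product of the $X^{0,1/2+}$ norms. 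Since $|\xi|^{1/2}|\xi_1-\xi_2|^{1/2}\le C\,|\xi_1^3+\xi_2^3 - (\xi_1+\xi_2)^3 + (\xi_1+\xi_2)^3|^{?}$ is not quite the identity I want, the cleaner route is the algebraic identity $\xi^3-\xi_1^3-\xi_2^3 = 3\xi\xi_1\xi_2$ together with $|\xi_1-\xi_2|^2 = (\xi_1+\xi_2)^2 - 4\xi_1\xi_2 = \xi^2 - 4\xi_1\xi_2$, so that $|\xi|\,|\xi_1-\xi_2| \lesssim |\xi|\,(|\xi| + |\xi_1\xi_2|^{1/2}) \lesssim |\xi|^2 + |\xi\,\xi_1\xi_2|$, and the second term is exactly $\tfrac13|\xi^3-\xi_1^3-\xi_2^3|$, which is controlled by $\langle\tau-\xi^3\rangle + \langle\tau_1-\xi_1^3\rangle + \langle\tau_2-\xi_2^3\rangle$ on the support of the convolution.

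Concretely, I would split into the region where $|\xi_1\xi_2| \lesssim |\xi|^2$ (the ``nonresonant'' factor is small, and $|\xi||\xi_1-\xi_2|\lesssim |\xi|^2 \lesssim$ product of the single frequencies, handled by two applications of the standard $X^{0,1/2+}$ bound $\|u\|_{L^4_{x,t}} \lesssim \|u\|_{X^{0,1/3}}$ for the Airy group, after moving one power of $|\xi|^{1/2}$ onto each factor) and the region $|\xi_1\xi_2| \gg |\xi|^2$, where $|\xi||\xi_1-\xi_2| \lesssim |\xi\xi_1\xi_2| \lesssim |\xi^3-\xi_1^3-\xi_2^3|$. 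In the latter region one of the three modulations $\langle\tau-\xi^3\rangle$, $\langle\tau_i-\xi_i^3\rangle$ is $\gtrsim |\xi^3-\xi_1^3-\xi_2^3|$; by symmetry and a further subdivision one distributes this gain: two of the three $\langle\cdot\rangle$ factors already carry exponent $\tfrac12+$, and the remaining needed weight $|\xi||\xi_1-\xi_2|$ is absorbed by the large modulation raised to a power $\le 1$, after which Cauchy--Schwarz in the ``free'' variable and Lemma \ref{sumatorio lp_lp'} / Fubini finish the estimate. This is essentially the bilinear estimate underlying the $L^2$ theory of mKdV, so I would cite \cite{C_Kening_2} or \cite{Tao} for the precise form and only indicate the modulation case-split.

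The main obstacle is the case-by-case bookkeeping of which modulation variable is the largest and ensuring that in every case one retains two ``$\tfrac12+$'' factors (needed for the Cauchy--Schwarz sums to converge via \eqref{serie1} and \eqref{sumcomreal}) while still having enough of the large modulation left to absorb the full weight $|\xi|^{1/2}|\xi_1-\xi_2|^{1/2}$; the borderline arithmetic is exactly $\tfrac12 + \tfrac12 = 1$, so there is no room to spare, which is why the statement is at the endpoint regularity $s=0$ with $b=\tfrac12+$. I expect the high-high-to-low interaction (where $|\xi| \ll |\xi_1| \sim |\xi_2|$ so $|\xi_1-\xi_2|$ could a priori still be small) to be the delicate sub-case, resolved by noting that there $|\xi_1\xi_2|\sim|\xi_1|^2 \gg |\xi|^2$ automatically, placing us in the resonant-gain regime.
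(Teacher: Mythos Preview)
The paper does not give its own proof of this lemma; it simply cites Gr\"unrock \cite{Bar:04} and Gr\"unrock--Vega \cite{Grunrock_2}. The argument there is quite different from yours and much shorter: by the standard transference principle one reduces to free solutions $u_j=S(t)\phi_j$, and then the $L^2_{x,t}$ norm is computed directly via the change of variables $(\xi_1,\xi_2)\mapsto (\xi_1+\xi_2,\,\xi_1^3+\xi_2^3)$, whose Jacobian $3|\xi_1^2-\xi_2^2|=3|\xi|\,|\xi_1-\xi_2|$ cancels the multiplier weight $|\xi|^{1/2}|\xi_1-\xi_2|^{1/2}$ \emph{exactly}. (This is the same mechanism the paper itself uses in the proof of Lemma~\ref{lemma alpha difert to 0} and Corollary~\ref{Pi_mnalpha}.) There is no case analysis, no Strichartz input, and no modulation splitting: the algebraic identity does all the work.

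Your route via the resonance function $\xi^3-\xi_1^3-\xi_2^3=3\xi\xi_1\xi_2$ and a modulation case-split is a legitimate general paradigm, but as written it has concrete gaps. First, the pointwise bound you state, $|\xi|\,|\xi_1-\xi_2|\lesssim |\xi|^2+|\xi\xi_1\xi_2|$, is false: take $\xi_1=\sqrt{\epsilon}$, $\xi_2=-\sqrt{\epsilon}+\epsilon$, so $\xi=\epsilon$; then the left side is $\sim\epsilon^{3/2}$ while the right side is $\sim\epsilon^2$. Second, the ``$L^4$ Strichartz'' you invoke, $\|u\|_{L^4_{x,t}}\lesssim\|u\|_{X^{0,1/3}}$, does not hold for the Airy group without a derivative correction (it is not scaling-admissible: the Airy $L^q_tL^r_x$ estimates come with a nonzero power of $D_x$ unless $(q,r)$ is the energy pair), so you cannot simply ``move one power of $|\xi|^{1/2}$ onto each factor'' and close with two $L^4$ norms. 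Third, in your Region~1 you would need to dominate $|\xi|^{1/2}|\xi_1-\xi_2|^{1/2}$ by something that factors over $\xi_1,\xi_2$ separately, and $|\xi|=|\xi_1+\xi_2|$ does not split that way.

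All of these issues disappear if you adopt the Jacobian argument: the weight $|\xi|^{1/2}|\xi_1-\xi_2|^{1/2}$ is precisely the square root of the Jacobian, so no inequality is needed, only an identity. If you wish to salvage your approach, the cleanest fix is to skip the $L^4$ step entirely and instead apply Cauchy--Schwarz in $\xi_1$ on the convolution integral after Plancherel; the resulting sup over $(\xi,\tau)$ of $\int |\xi|\,|\xi_1-\xi_2|\,\langle\tau_1-\xi_1^3\rangle^{-1-}\langle\tau_2-\xi_2^3\rangle^{-1-}\,d\xi_1\,d\tau_1$ is what one must bound, and integrating out $\tau_1$ and changing variables $\xi_1\mapsto \xi_1^3+(\xi-\xi_1)^3$ reproduces the Jacobian cancellation. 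In other words, even the ``direct'' $X^{s,b}$ route ultimately rests on the same change of variables, not on a modulation dichotomy.
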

\begin{proof}
	See \cite{Bar:04}, see also Lemma 3.1 and Corollary 3.2 in \cite{Grunrock_2}.
\end{proof}

\begin{corollary}\label{CorolPPi1}
	Let $N_1,N_2\geq 1$ be dyadic integers such that $N_1\gg N_2$.  Then, we have
	\begin{equation}\label{corolario_1}
		\|P_{N_1}v P_{N_2}u\|_{L^2_{x,t}(\mathbb{R}^2)}
		\lesssim \frac{1}{N_1}\|P_{N_1}v\|_{X^{0,\frac{1}{2}+}_{\alpha}} 
		\|P_{N_2}u\|_{X^{0,\frac{1}{2}+}_{\alpha}}.
	\end{equation}

	Let $m,n\in\mathbb{Z}$ such that $|m+n|,|m-n|>2$. Then, we have
	\begin{equation*}
		\|\Pi_m v \Pi_n u\|_{L^2_{x,t}(\mathbb{R}^2)}\lesssim 
		\frac{1}{\sqrt{|m+n||m-n|}}
		\|\Pi_m v\|_{X^{0,\frac{1}{2}+}} 
		\|\Pi_n u\|_{X^{0,\frac{1}{2}+}}.
	\end{equation*}
\end{corollary}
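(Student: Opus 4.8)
The plan is to derive both inequalities directly from the bilinear Riesz‑potential bound of Lemma \ref{Op de Riesz}, using the sharp frequency localization of the two factors to insert the operators $I^{1/2}$ and $I^{1/2}_-$ at essentially no cost. Recall that $\widehat{I^{1/2}h}(\xi)=|\xi|^{1/2}\widehat h(\xi)$ and
\[
\mathcal F_x\big(I^{1/2}I^{1/2}_-(f,g)\big)(\xi)=|\xi|^{1/2}\int_{\xi=\xi_1+\xi_2}|\xi_1-\xi_2|^{1/2}\,\widehat f(\xi_1)\,\widehat g(\xi_2)\,d\xi_1 .
\]
The idea is: given the product of two frequency‑localized functions, bound it pointwise on the Fourier side by a constant (coming from the localization) times $\mathcal F_x\big(I^{1/2}I^{1/2}_-(\cdot,\cdot)\big)$, then take $L^2_{x,t}$‑norms (Plancherel in $x$) and apply Lemma \ref{Op de Riesz}. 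Since the $X^{s,b}$‑type norms on the right‑hand side only see the modulus of the space–time Fourier transform, we may replace each factor by the function whose Fourier transform is the modulus of the original one; this makes all the convolution integrals below non‑negative without altering any norm and without changing the frequency supports.

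For \eqref{corolario_1}, let $\tilde F,\tilde G$ be the functions with $\widehat{\tilde F}=|\widehat{P_{N_1}v}|$ and $\widehat{\tilde G}=|\widehat{P_{N_2}u}|$, so $\widehat{\tilde F}$ is supported in $|\xi_1|\sim N_1$, $\widehat{\tilde G}$ in $|\xi_2|\sim N_2$, and $\|\tilde F\|_{X^{0,\frac{1}{2}+}_{\alpha}}=\|P_{N_1}v\|_{X^{0,\frac{1}{2}+}_{\alpha}}$, $\|\tilde G\|_{X^{0,\frac{1}{2}+}_{\alpha}}=\|P_{N_2}u\|_{X^{0,\frac{1}{2}+}_{\alpha}}$. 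Since $N_1\gg N_2$, on the convolution region $\xi=\xi_1+\xi_2$ there is no cancellation: $|\xi|\gtrsim|\xi_1|-|\xi_2|\gtrsim N_1$ and $|\xi_1-\xi_2|\gtrsim|\xi_1|-|\xi_2|\gtrsim N_1$, hence $1\lesssim N_1^{-1}|\xi|^{1/2}|\xi_1-\xi_2|^{1/2}$ there. Therefore, pointwise in the frequency variable,
\[
\big|\mathcal F_x\big(P_{N_1}v\,P_{N_2}u\big)(\xi)\big|\le\big(\widehat{\tilde F}*\widehat{\tilde G}\big)(\xi)\lesssim\frac1{N_1}\,\mathcal F_x\big(I^{1/2}I^{1/2}_-(\tilde F,\tilde G)\big)(\xi),
\]
and taking the $L^2_{x,t}$‑norm and applying Lemma \ref{Op de Riesz} gives
\[
\|P_{N_1}v\,P_{N_2}u\|_{L^2_{x,t}}\lesssim\frac1{N_1}\|I^{1/2}I^{1/2}_-(\tilde F,\tilde G)\|_{L^2_{x,t}}\lesssim\frac1{N_1}\|P_{N_1}v\|_{X^{0,\frac{1}{2}+}_{\alpha}}\|P_{N_2}u\|_{X^{0,\frac{1}{2}+}_{\alpha}} .
\]
In the last step we used that the conclusion of Lemma \ref{Op de Riesz} remains valid with $X^{0,\frac{1}{2}+}$ replaced by $X^{0,\frac{1}{2}+}_{\alpha}$ for any $\alpha\neq0$, which follows from the scaling $x\mapsto\alpha^{-1/3}x$ conjugating $e^{t\partial_x^3}$ to $e^{t\alpha\partial_x^3}$ and leaving the estimate invariant up to an $\alpha$‑dependent constant.

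The second estimate is proved in exactly the same fashion with $P_{N_1},P_{N_2}$ replaced by $\Pi_m,\Pi_n$. Now the supports of $\psi_m,\psi_n$ force $\xi_1=m+O(1)$ and $\xi_2=n+O(1)$ on the convolution region, so $\xi=m+n+O(1)$ and $\xi_1-\xi_2=m-n+O(1)$; since $|m+n|>2$ and $|m-n|>2$, Proposition \ref{O(1)}(i) gives $|\xi|\sim|m+n|$ and $|\xi_1-\xi_2|\sim|m-n|$, whence $1\lesssim(|m+n|\,|m-n|)^{-1/2}|\xi|^{1/2}|\xi_1-\xi_2|^{1/2}$ on that region. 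Running the previous argument with this factor in place of $N_1^{-1}|\xi|^{1/2}|\xi_1-\xi_2|^{1/2}$ and invoking Lemma \ref{Op de Riesz} gives the desired bound. The only delicate point in the whole argument is this elementary frequency bookkeeping: the two gains come respectively from the absence of cancellation in $\xi_1\pm\xi_2$ when $N_1\gg N_2$, and from the hypotheses $|m\pm n|>2$, which are precisely what keep $|\xi|$ and $|\xi_1-\xi_2|$ bounded away from zero despite the $O(1)$ uncertainty in the frequency supports; once this is settled, both inequalities are immediate transcriptions of Lemma \ref{Op de Riesz}.
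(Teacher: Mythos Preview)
Your proposal is correct and is precisely the standard derivation one would expect: the paper itself does not give a proof but simply cites Corollary~1 in \cite{Bar:02}, whose argument is exactly the one you wrote---insert the weight $|\xi|^{1/2}|\xi_1-\xi_2|^{1/2}$ using the frequency localization (via $N_1\gg N_2$ in the first part, via $|m\pm n|>2$ in the second) and then invoke Lemma~\ref{Op de Riesz}. Your additional remark that the $X^{0,\frac12+}_\alpha$ version of Lemma~\ref{Op de Riesz} follows from the $\alpha=1$ case by the dilation $x\mapsto\alpha^{-1/3}x$ is also correct and is the mechanism the paper uses elsewhere (cf.\ \eqref{4.1.5_2_2}--\eqref{4.1.5_2_2x}); the only minor imprecision is that you mix spatial and space--time Fourier transforms in the displayed pointwise bound, but the intended inequality (work with the full space--time transform and use that the $X^{0,b}$--norm depends only on $|\widehat u(\xi,\tau)|$) is clear and valid.
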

\begin{proof}
See Corollary 1 in \cite{Bar:02}.
\end{proof}


\begin{lemma}\label{lemma alpha difert to 0}
	Let $u,v\in \mathbb{S}(\mathbb{R}^2)$ such that $supp\,\widehat{u}\subset \{|\xi|\sim N\}$, $supp\,\widehat{v}\subset \{|\xi|\ll N\}$ and $\alpha\neq 0$. Then
	\begin{equation}\label{lema_esti_bilineal_con alfa}
		\|u v\|_{L^2_x L^2_t}\lesssim \frac{1}{N}\|u\|_{X^{0,\frac{1}{2}+}_\alpha}
		\|v\|_{X^{0,\frac{1}{2}+}}.
	\end{equation}
\end{lemma}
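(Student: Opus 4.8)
\emph{Approach.} The estimate is of the standard $L^2$-bilinear type, and the plan is to prove it by a direct computation on the Fourier side, exactly as in the proof of Corollary~\ref{CorolPPi1}, the only new feature being that the two factors are adapted to the \emph{different} groups $S_1(t)$ and $S_\alpha(t)$. By Plancherel's theorem $\|uv\|_{L^2_{x,t}}=\|\widehat u\ast\widehat v\|_{L^2_{\xi,\tau}}$, where the convolution is in both $\xi$ and $\tau$. Writing
\[
\widehat u(\xi_1,\tau_1)=\frac{f(\xi_1,\tau_1)}{\langle\tau_1-\alpha\xi_1^3\rangle^{\frac12+}},\qquad \widehat v(\xi_2,\tau_2)=\frac{g(\xi_2,\tau_2)}{\langle\tau_2-\xi_2^3\rangle^{\frac12+}},
\]
so that $\|f\|_{L^2}=\|u\|_{X^{0,\frac12+}_\alpha}$, $\|g\|_{L^2}=\|v\|_{X^{0,\frac12+}}$, with $f$ supported in $|\xi_1|\sim N$ and $g$ in $|\xi_2|\ll N$, the Cauchy--Schwarz inequality in the convolution variables $(\xi_1,\tau_1)$ gives $|\widehat u\ast\widehat v(\xi,\tau)|^2\le K(\xi,\tau)\,\big(|f|^2\ast|g|^2\big)(\xi,\tau)$ with
\[
K(\xi,\tau):=\int_{\substack{|\xi_1|\sim N\\|\xi-\xi_1|\ll N}}\int_{\R}\frac{d\tau_1\,d\xi_1}{\langle\tau_1-\alpha\xi_1^3\rangle^{1+}\langle(\tau-\tau_1)-(\xi-\xi_1)^3\rangle^{1+}}.
\]
Since $\|\,|f|^2\ast|g|^2\,\|_{L^1_{\xi,\tau}}=\|f\|_{L^2}^2\|g\|_{L^2}^2$, the lemma reduces to the uniform bound $\sup_{\xi,\tau}K(\xi,\tau)\lesssim N^{-2}$.

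\emph{The kernel bound.} First I would integrate in $\tau_1$: by the elementary inequality $\int_{\R}\langle\tau_1-a\rangle^{-(1+)}\langle\tau_1-c\rangle^{-(1+)}\,d\tau_1\lesssim\langle a-c\rangle^{-(1+)}$ (valid since both exponents exceed $1$), applied with $a=\alpha\xi_1^3$ and $c=\tau-(\xi-\xi_1)^3$, one gets
\[
K(\xi,\tau)\lesssim\int_{\substack{|\xi_1|\sim N\\|\xi-\xi_1|\ll N}}\frac{d\xi_1}{\langle h(\xi_1)-\tau\rangle^{1+}},\qquad h(\xi_1):=\alpha\xi_1^3+(\xi-\xi_1)^3.
\]
Then I change variables $\mu=h(\xi_1)-\tau$. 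We have $h'(\xi_1)=3\alpha\xi_1^2-3(\xi-\xi_1)^2$; on the domain of integration $|\xi_1|\sim N$ and $|\xi-\xi_1|\ll N$ (hence also $|\xi|\sim N$), so, taking the implicit constant in the relation $|\xi-\xi_1|\ll N$ small enough in terms of $\alpha$, one obtains $|h'(\xi_1)|=3|\alpha\xi_1^2-(\xi-\xi_1)^2|\gtrsim|\alpha|N^2\sim N^2$. In particular $h'$ is nonvanishing, hence of constant sign, on each of the two intervals comprising $\{|\xi_1|\sim N\}$, so $h$ is strictly monotone there and the substitution yields $d\xi_1=|h'(\xi_1)|^{-1}\,d\mu\lesssim N^{-2}\,d\mu$, whence
\[
\int_{\substack{|\xi_1|\sim N\\|\xi-\xi_1|\ll N}}\frac{d\xi_1}{\langle h(\xi_1)-\tau\rangle^{1+}}\lesssim\frac{1}{N^2}\int_{\R}\frac{d\mu}{\langle\mu\rangle^{1+}}\lesssim\frac{1}{N^2},
\]
uniformly in $(\xi,\tau)$. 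This is the required bound on $K$, and the lemma follows.

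\emph{Main obstacle.} The only delicate point is the lower bound $|h'(\xi_1)|\gtrsim N^2$; this is exactly where $\alpha\neq0$ and the frequency separation $|\xi-\xi_1|\ll N\sim|\xi_1|$ are used, and it reflects the fact that for well-separated frequencies there is effectively no resonance regardless of the value of $\alpha$. (When the frequencies are comparable, the cubic phase $\alpha\xi_1^3+\xi_2^3+\xi_3^3$ genuinely degenerates at $\alpha=1$, which is why $\alpha\neq1$ is needed in the trilinear estimates elsewhere in the paper, but that issue does not arise here.) The remaining points --- the $\tau_1$-convolution inequality and performing the change of variables separately on the two components of $\{|\xi_1|\sim N\}$ --- are routine.
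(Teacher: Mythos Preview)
Your argument is correct and complete; the key non-resonance computation $|h'(\xi_1)|=3|\alpha\xi_1^2-(\xi-\xi_1)^2|\gtrsim|\alpha|N^2$ on the region $|\xi_1|\sim N$, $|\xi-\xi_1|\ll N$ is exactly the heart of the matter.

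The paper's proof takes a different but closely related route. Rather than bounding the convolution kernel directly, it first invokes the standard transfer principle (reducing to the case $u=S_\alpha(t)\phi$, $v=S(t)\psi$ of free solutions), then uses duality against $F\in L^2_{x,t}$ and the change of variables $(\xi_1,\xi_2)\mapsto(s,r)=(\xi_1+\xi_2,\,\alpha\xi_1^3+\xi_2^3)$, whose Jacobian $3|\xi_2^2-\alpha\xi_1^2|\sim N^2$ is precisely your $|h'(\xi_1)|$. Your approach is somewhat more self-contained---it avoids the reduction to free evolutions and the duality step, working instead entirely on the Fourier side via Cauchy--Schwarz and the elementary $\tau_1$-convolution lemma---at the cost of an explicit kernel bound. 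The paper's approach is more conceptual (it makes the bilinear-Strichartz structure transparent) and generalizes cleanly to Corollary~\ref{Pi_mnalpha}. Both are standard techniques in the $X^{s,b}$ literature, and the substantive analytic content---the Jacobian lower bound exploiting $\alpha\neq0$ and the frequency separation---is identical.
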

\begin{proof}
	Using arguments from \cite{Carvajal_2} and \cite{Colliander_1}, to prove the estimate (\ref{lema_esti_bilineal_con alfa}), it suffices to show that
	\begin{equation*}
		\|u v\|_{L^2_x L^2_t}\lesssim \frac{1}{N} \|\phi\|_{L^2} \|\psi\|_{L^2},
	\end{equation*}
	where $u=S_\alpha(t)\phi$ and $v=S(t)\psi$.\\
	Using duality and Fubini's theorem, we have
	\begin{equation}\label{eqlemx1}
		\begin{split}
			\|u v\|_{L^2_x L^2_t}
			& = \sup_{\|F\|\leq 1}\left|\int_{\mathbb{R}^2} uv \overline{F}\, dxdt\right|\\ 
			& = \sup_{\|F\|\leq 1}\left|\int_{\mathbb{R}}\int_{\xi_1+\xi_2=\xi_3}\widehat{u}(\xi_1) 
			\widehat{v}(\xi_2)\overline{\widehat{F}}(\xi_3)\, d\xi_1 d\xi_2dt\right|\\
			& = \sup_{\|F\|\leq 1}\left|\int_{\mathbb{R}}\int_{\xi_1+\xi_2=\xi_3}e^{it\alpha \xi_1^3}\widehat{\phi}(\xi_1) 
			e^{it \xi_2^3}\widehat{\psi}(\xi_2)\overline{\widehat{F}}(\xi_3)\, d\xi_1 d\xi_2dt\right|\\
			& = \sup_{\|F\|\leq 1}\left|\int_{\xi_1+\xi_2=\xi_3}
			\widehat{\phi}(\xi_1) \widehat{\psi}(\xi_2)
			\int_{\mathbb{R}}
			e^{it\alpha \xi_1^3+it\xi_2^3}
			\overline{\widehat{F}}(\xi_3)\, dt d\xi_1 d\xi_2\right|,
		\end{split}
	\end{equation}
	where we used the notation $\|F\|:=\|F\|_{L^2_x L^2_t}$. Thus, we get
	\begin{equation}\label{eqlemx2}
		\begin{split}
			\|u v\|_{L^2_x L^2_t}            
			& = \sup_{\|F\|\leq 1}\left|\int_{\xi_1+\xi_2=\xi_3}
			\widehat{\phi}(\xi_1) \widehat{\psi}(\xi_2)
			\overline{\int_{\mathbb{R}}
				e^{-it(\alpha\xi_1^3+\xi_2^3)}
				\widehat{F}(\xi_3,t)\, dt} d\xi_1 d\xi_2\right|\\
			& = \sup_{\|F\|\leq 1}\left|\int_{\xi_1+\xi_2=\xi_3}
			\widehat{\phi}(\xi_1) \widehat{\psi}(\xi_2)
			\overline{\widehat{F}(\xi_3,\alpha\xi_1^3+\xi_2^3)}\, d\xi_1 d\xi_2\right|\\
			& \leq \sup_{\|F\|\leq 1}\int_{\xi_1+\xi_2=\xi_3}
			\left|\widehat{\phi}(\xi_1) \widehat{\psi}(\xi_2)
			\overline{\widehat{F}(\xi_3,\alpha\xi_1^3+\xi_2^3)}\right|\, d\xi_1 d\xi_2\\
			& = \sup_{\|F\|\leq 1}\int_{\mathbb{R}^2}
			\left|\widehat{\phi}(\xi_1) \widehat{\psi}(\xi_2)
			\overline{\widehat{F}(\xi_1+\xi_2,\alpha\xi_1^3+\xi_2^3)}\right|\, d\xi_1 d\xi_2\\
			& \sim \sup_{\|F\|\leq 1}\int_{\mathbb{R}^2}
			\left|\widehat{\phi}(\xi_1(s,r)) 
			\widehat{\psi}(\xi_2(s,r))
			\overline{\widehat{F}(s,r)}\right|\,\frac{1}{N^2} ds dr,
		\end{split}
	\end{equation}
	in the last inequality, we made a change of variables, setting $\xi_1+\xi_2=s$ and $\alpha \xi_1^3+\xi_2^3=r$, with the corresponding Jacobian 
	$$
	\left|\dfrac{\partial(s,r)}{\partial(\xi_1, \xi_2)}\right|=3|(\xi_2^2- \alpha\xi_1^2)| \sim N^2.
	$$
	 Now, due to H\"older's inequality,
	\begin{equation*}
		\begin{split}
			\|u v\|_{L^2_x L^2_t} & \leq \sup_{\|F\|\leq 1}
			\frac{1}{N^2}
			\|\widehat{\phi}\widehat{\psi}\|_{L^2_{s,r}} \|\widehat{F}\|_{L^2_{s,r}}\\
			& = \sup_{\|F\|\leq 1} \frac{1}{N^2}
			\left(\int_{\mathbb{R}^2} |\widehat{\phi}(\xi_1(s,r))|^2 |\widehat{\psi}(\xi_2(s,r))|^2ds dr\right)^\frac{1}{2} \|F\|_{L^2_x L^2_t}\\
			& \sim \sup_{\|F\|\leq 1} \frac{1}{N^2}
			\left(\int_{\mathbb{R}^2} |\widehat{\phi}(\xi_1)|^2|\widehat{\psi}(\xi_2)|^2 N^2 d\xi_1 d\xi_2\right)^\frac{1}{2} \|F\|_{L^2_x L^2_t}\\
			& = \frac{1}{N} \|\phi\|_{L^2} \|\psi\|_{L^2},
		\end{split}
	\end{equation*}
	in the third line, we performed the inverse change of variables, returning to the original variables $\xi_1$, $\xi_2$. This completes the proof of the  estimate (\ref{lema_esti_bilineal_con alfa}).
\end{proof}

\begin{corollary}\label{Pi_mnalpha}
	Let $m,n\in\mathbb{Z}$ be such that $|m\pm \sqrt{\alpha}\, n|\gg1 $. Then, we have
	\begin{equation*}
		\|\Pi_m v \Pi_n u\|_{L^2_{x,t}(\mathbb{R}^2)}\lesssim 
		\frac{1}{|m+\sqrt{\alpha}\, n|^{1/2}|m-\sqrt{\alpha}\, n|^{1/2}}
		\|\Pi_m v\|_{X^{0,\frac{1}{2}+}_{\alpha}} 
		\|\Pi_n u\|_{X^{0,\frac{1}{2}+}}.
	\end{equation*}
\end{corollary}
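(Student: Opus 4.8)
The plan is to deduce Corollary \ref{Pi_mnalpha} from Lemma \ref{lemma alpha difert to 0} by a frequency-localization argument, exactly paralleling the passage from Lemma \ref{Op de Riesz} to Corollary \ref{CorolPPi1}. First I would set $u_1 = \Pi_m v$ and $u_2 = \Pi_n u$, so that $\widehat{u_1}$ is supported in $\{\xi_1 = m + O(1)\}$ and $\widehat{u_2}$ in $\{\xi_2 = n + O(1)\}$. The key observation is that on these supports the Jacobian factor appearing in the proof of Lemma \ref{lemma alpha difert to 0}, namely $3|\xi_2^2 - \alpha\xi_1^2| = 3|\xi_2 - \sqrt\alpha\,\xi_1|\,|\xi_2 + \sqrt\alpha\,\xi_1|$, is comparable to $|n - \sqrt\alpha\, m|\,|n + \sqrt\alpha\, m|$ — provided this quantity is $\gg 1$ so that the $O(1)$ perturbations from the unit-length supports are negligible. (One should be a little careful here: the statement is phrased with $|m \pm \sqrt\alpha\, n|$, so I would either relabel the roles of $m,n$ consistently with Lemma \ref{lemma alpha difert to 0}, or simply note the symmetry $|m - \sqrt\alpha\, n|\,|m + \sqrt\alpha\, n| = \alpha\,|n - m/\sqrt\alpha|\,|n + m/\sqrt\alpha|$ and absorb constants.)

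The main steps, in order, would be: (1) reduce as in Lemma \ref{lemma alpha difert to 0} to the free-solution case $\Pi_m v = \Pi_m S_\alpha(t)\phi$, $\Pi_n u = \Pi_n S(t)\psi$, with $\widehat\phi$ supported near $\xi_1 = m$, $\widehat\psi$ supported near $\xi_2 = n$; (2) run the duality/Fubini computation of \eqref{eqlemx1}–\eqref{eqlemx2} verbatim, arriving at the change of variables $s = \xi_1 + \xi_2$, $r = \alpha\xi_1^3 + \xi_2^3$; (3) observe that on the relevant support the Jacobian satisfies $\left|\frac{\partial(s,r)}{\partial(\xi_1,\xi_2)}\right| = 3|\xi_2^2 - \alpha\xi_1^2| \sim |m + \sqrt\alpha\, n|\,|m - \sqrt\alpha\, n|$; (4) apply Hölder in $(s,r)$ and change back, picking up this Jacobian to the power $-1 + \tfrac12 = -\tfrac12$ on each of the two factors, exactly as in the last display of the proof of Lemma \ref{lemma alpha difert to 0}; (5) conclude $\|\Pi_m v\,\Pi_n u\|_{L^2_{x,t}} \lesssim |m+\sqrt\alpha\,n|^{-1/2}|m-\sqrt\alpha\,n|^{-1/2}\|\phi\|_{L^2}\|\psi\|_{L^2}$, and finally remove the free-solution reduction by the standard $X^{0,\frac12+}$ transference principle (writing a general $X^{0,\frac12+}$ function as a superposition of modulated free solutions and using $\frac12+ > \frac12$ to sum), yielding the stated bound with the $X^{0,\frac12+}_\alpha$ and $X^{0,\frac12+}$ norms on the right.

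The step I expect to be the only real (though minor) obstacle is step (3): verifying that $|\xi_2^2 - \alpha\xi_1^2| \sim |m + \sqrt\alpha\, n|\,|m - \sqrt\alpha\, n|$ uniformly over the unit-length frequency cubes. Writing $\xi_1 = m + O(1)$ and $\xi_2 = n + O(1)$, one has $\xi_2^2 - \alpha\xi_1^2 = (n - \sqrt\alpha\, m)(n + \sqrt\alpha\, m) + O(|m| + |n| + 1)$, so the comparison is valid precisely under a hypothesis of the form $|m \pm \sqrt\alpha\, n| \gg 1$ together with (implicitly) $\max(|m|,|n|) \lesssim |m+\sqrt\alpha\,n|\,|m-\sqrt\alpha\,n|$; since $|m^2 - \alpha n^2| \geq \max(|m|,|n|)$ fails only in a bounded region, the hypothesis $|m \pm \sqrt\alpha\, n| \gg 1$ as stated is what makes the error terms harmless — I would invoke Proposition \ref{O(1)}(i) to make this rigorous, just as Corollary \ref{CorolPPi1} uses it. Everything else is a direct transcription of the proof of Lemma \ref{lemma alpha difert to 0}.
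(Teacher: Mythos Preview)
Your proposal is correct and follows essentially the same route as the paper: rerun the duality computation \eqref{eqlemx1}--\eqref{eqlemx2} from Lemma~\ref{lemma alpha difert to 0} on the unit-cube supports and read off the Jacobian. The only simplification worth noting is that your step~(3) worry dissolves if you factor \emph{before} estimating: the paper writes $3|\xi_2^2-\alpha\xi_1^2|=3|\xi_2-\sqrt\alpha\,\xi_1|\,|\xi_2+\sqrt\alpha\,\xi_1|$ and then observes $|\xi_2\pm\sqrt\alpha\,\xi_1|=|m\pm\sqrt\alpha\,n+O(1)|\sim|m\pm\sqrt\alpha\,n|$ directly from the hypothesis $|m\pm\sqrt\alpha\,n|\gg1$ (Proposition~\ref{O(1)}(i)), so no product-level error term $O(|m|+|n|)$ ever appears.
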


\begin{proof}
We use similar argument as in \eqref{eqlemx1} and \eqref{eqlemx2}. In this case, the Jacobian is
$$
	\left|\dfrac{\partial(s,r)}{\partial(\xi_1, \xi_2)}\right|=3|(\xi_2^2- \alpha\xi_1^2)|=3|\xi_2- \sqrt{\alpha}\,\xi_1|\,|\xi_2+\sqrt{\alpha}\,\xi_1|,
$$
where $\xi_1=n+O(1)$ and $\xi_2=m+O(1)$.  Observe that
$$
|\xi_2\pm \sqrt{\alpha}\,\xi_1|=|m \pm \sqrt{\alpha}\,n+ O(1)| \sim |m \pm \sqrt{\alpha}\,n|.
$$
\end{proof}


\subsection{Trilinear estimates}
In this subsection we derive  trilinear estimates that will be fundamental in establishing the local well- posedness result for the IVP \eqref{PVI_Majda-Biello equation} with initial data in the modulation space $M^{2,p}_s$ where $p\ge2$ and $s> \frac{1}{4}-\frac1{p}$.

\begin{proposition}\label{estimativ bilinear u1u2}
	Let $\alpha\in (0, 1)$,  $2\leq p <\infty$, and   $0<T\leq 1$. Given $v\in X^{s,b}_p([0,T])$ and $w_1,w_2\in X^{s,b}_{p,\alpha}([0,T])$, the following trilinear estimate
	\begin{equation}\label{trilinear estimate_1}
		\|\partial_x (v\, w_1 w_2)
		\|_{X^{s,b'}_p([0,T])}
		\lesssim
		\|v\|_{X^{s,b}_p([0,T])}
		\|w_1\|_{X^{s,b}_{p,\alpha}([0,T])} 
		\|w_2\|_{X^{s,b}_{p,\alpha}([0,T])},
	\end{equation}
	hold for each $s> \frac{1}{4}-\frac1{p}$ and for some $b>\frac{1}{2}$, $b'>-\frac{1}{2}$. 
	
	 Similarly, given $v_1, v_2\in X^{s,b}_p([0,T])$ and $w\in X^{s,b}_{p,\alpha}([0,T])$, the following trilinear estimate
	\begin{equation}\label{trilinear estimate_2}
		\|\partial_x (v_1 v_2\, w)
		\|_{X^{s,b'}_{p,\alpha}([0,T])}
		\lesssim
		\|v_1\|_{X^{s,b}_p([0,T])}
		\|v_2\|_{X^{s,b}_p([0,T])} 
		\|w\|_{X^{s,b}_{p,\alpha}([0,T])},
	\end{equation}
	hold for each $s> \frac{1}{4}-\frac1{p}$, and for some $b>\frac{1}{2}$, $b'>-\frac{1}{2}$.
\end{proposition}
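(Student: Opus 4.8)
The plan is to prove the two trilinear estimates \eqref{trilinear estimate_1} and \eqref{trilinear estimate_2} simultaneously, since they are structurally symmetric: in both cases one has two factors governed by one group and one factor by the other, and the only thing that matters is the resonance function coming from the three linear propagators. By the standard Bourgain-space localization argument (extending functions off $[0,T]$ and using Lemmas \ref{Lema 2.1}--\ref{Lema 2.2}), it suffices to prove the global-in-time estimates, i.e. with $X^{s,b}_p([0,T])$ replaced by $X^{s,b}_p$ and similarly for the $\alpha$-spaces. First I would dualize: writing $f$ for the dual variable in $X^{-s,-b'}_p$ (or rather using the $\ell^p$--$\ell^{p'}$ duality inherent in the modulation-adapted norm \eqref{Bourgain spaces adapted}), the estimate reduces to bounding a sextuple integral
\[
\left|\int_{\mathbb{R}^6} \overline{\widehat{f}(\xi,\tau)}\, \xi\, \widehat{v}(\xi_1,\tau_1)\widehat{w_1}(\xi_2,\tau_2)\widehat{w_2}(\xi_3,\tau_3)\right|
\]
with the constraints $\xi=\xi_1+\xi_2+\xi_3$, $\tau=\tau_1+\tau_2+\tau_3$, by the product of the four relevant norms. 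The derivative $\partial_x$ contributes the factor $|\xi|\lesssim \max\langle n_j\rangle$, which after Littlewood--Paley/modulation decomposition $v=\sum \Pi_{n_1}v$, etc., is absorbed (up to the $\langle n\rangle^s$ weights) by arranging that the largest frequency sits on one of the input factors or on $f$, and using the weight-distribution inequality $\langle\xi\rangle^s\lesssim \langle n_1\rangle^{|s|}\langle n_2\rangle^{|s|}\langle n_3\rangle^{|s|}$ together with a gain from the resonance.

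Second, I would split into the high-high-low / high-low-low frequency interaction regimes, decomposing each function into unit-frequency pieces $\Pi_n$. The crucial point --- and the reason $\alpha\neq 1$ helps --- is the resonance identity: for the $(vw_1w_2)_x$ term the modulation symbol is $\tau-\xi^3$ on the output and $\tau_1-\xi_1^3$, $\tau_2-\alpha\xi_2^3$, $\tau_3-\alpha\xi_3^3$ on the inputs, so
\[
(\tau-\xi^3)-(\tau_1-\xi_1^3)-(\tau_2-\alpha\xi_2^3)-(\tau_3-\alpha\xi_3^3) = -\xi^3+\xi_1^3+\alpha\xi_2^3+\alpha\xi_3^3 =: H(\xi_1,\xi_2,\xi_3),
\]
and when $\alpha\ne 1$ this resonance function does not vanish generically; in the high-low-low and high-high-low regimes it is comparable to a positive power of the largest frequency, which means at least one of the four modulation weights $\langle\tau_j-\alpha_j\xi_j^3\rangle$ is large. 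Assigning that gain to $f$ (the worst case) one reduces to a bilinear $L^2_{x,t}$ estimate on the remaining two factors, which is exactly what Corollary \ref{CorolPPi1} and Corollary \ref{Pi_mnalpha} provide: the product $\Pi_m v\,\Pi_n u$ with $|m\pm\sqrt\alpha n|\gg1$ gains $|m+\sqrt\alpha n|^{-1/2}|m-\sqrt\alpha n|^{-1/2}$, and for the same-group product one gains $|m+n|^{-1/2}|m-n|^{-1/2}$. The resulting $\ell^p_n$-sums over the frequency lattice are then controlled by Proposition \ref{sumatorio lp_lp'}, particularly \eqref{serie1}, \eqref{sumatorio lp_lp' enunciado} and \eqref{sumkM}; the constraint $s>\frac14-\frac1p$ is precisely what makes these sums converge after one pays $N^{1/p-1/2}$ from passing between $X^{s,b}_p$ and $X^{s,b}_2$ via \eqref{estimate X q and p} and $N^{-\epsilon}$-summability via \eqref{N-epsilon}.

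The main obstacle I expect is the \emph{nearly-resonant region} where $|\xi_2^2-\alpha\xi_1^2|$ (equivalently $|m-\sqrt\alpha n|$) is small, so that the Jacobian bound behind Corollary \ref{Pi_mnalpha} degenerates; there the bilinear smoothing gain is weak, and one must instead extract the gain from the fact that in this region $H$ is still large (it cannot vanish together with its relevant derivative when $\alpha\ne 1$), i.e. handle the case distinctions on $\max\langle\tau_j-\alpha_j\xi_j^3\rangle$ carefully rather than always dumping the modulation gain onto $f$. Balancing these two sources of smoothing --- bilinear $L^2$ gain versus modulation gain --- across all frequency configurations, while keeping the $\langle n\rangle^s$ weights and the $\ell^p$ structure under control, is the technical heart of the argument; once the worst interaction (high-high output against a high input, with comparable-sized frequencies and the near-resonance) is dispatched, the remaining cases are easier and follow the same template. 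A symmetric computation, with the roles of the two groups interchanged and the resonance function $H'(\xi_1,\xi_2,\xi_3)=-\alpha\xi^3+\xi_1^3+\xi_2^3+\alpha\xi_3^3$, gives \eqref{trilinear estimate_2}.
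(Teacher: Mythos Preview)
Your outline has the right architecture---duality, frequency decomposition, bilinear $L^2$ smoothing from Corollaries \ref{CorolPPi1} and \ref{Pi_mnalpha}, and $\ell^p$ summation via Proposition \ref{sumatorio lp_lp'}---but the specific resonance mechanism you describe is not the one the paper uses, and your identification of the bottleneck is off. In the non-resonant and semi-resonant regimes (Cases B and C in the paper: $N_{\max}\gg N_{\min}$) the paper does \emph{not} invoke any modulation/resonance argument at all; it simply uses the bilinear smoothing of Corollary \ref{CorolPPi1} twice to recover a full $N_{\max}^{-2}$ gain, which already suffices. The modulation argument only enters in the fully resonant case $N_1\sim N_2\sim N_3\sim N\gg 1$ (Case D), and there the paper does \emph{not} use the mixed-$\alpha$ resonance function $H=-\xi^3+\xi_1^3+\alpha\xi_2^3+\alpha\xi_3^3$ you wrote down (which does not factor). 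Instead it introduces auxiliary single-group modulations $\sigma^j=\tau_j-\xi_j^3$ and uses the clean factorization $-\xi^3-\xi_1^3-\xi_2^3-\xi_3^3=3(\xi_1+\xi_2)(\xi_2+\xi_3)(\xi_3+\xi_1)$ (see \eqref{sigmas_sumatorio}), converting back to $X^{s,b}_\alpha$ norms via the dilation $\overset{\alpha}{f}(x)=\alpha^{1/3}f(\alpha^{1/3}x)$ (equations \eqref{4.1.5_2_2}--\eqref{4.1.5_2_2x}). The dependence on $\alpha\ne 1$ enters not through a non-vanishing resonance polynomial but through the bilinear estimate of Corollary \ref{Pi_mnalpha} (Jacobian $|m\pm\sqrt\alpha\,n|$) and, crucially, through a direct block-by-block application of the $X^{s,b}$ trilinear estimate from \cite{Carvajal_and_Panthee} (valid for $s>-\tfrac12$) in the diagonal sub-subcase D1.1.

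You locate the hardest case at the degeneration of Corollary \ref{Pi_mnalpha} ($|m-\sqrt\alpha\,n|$ small), but the paper's actual bottleneck---where the threshold $s>\tfrac14-\tfrac1p$ is saturated---is Sub-subcases D1.5 and D2.2 (see the Remark following the proof). These are the regimes where all three input frequencies are comparable and all pairwise sums and differences $|\xi_i\pm\xi_j|$ are large, so bilinear smoothing is available but only yields partial gains; what remains is a delicate triple $\ell^p$ summation handled by repeated H\"older/Young inequalities (the computations around \eqref{x1sub subcase 4.1.2xw}--\eqref{x1sub subcase 4.1.2xw4} and \eqref{4.1.5_0x}). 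Your proposal does not address this summation, and the mixed-$\alpha$ resonance function you plan to exploit still has zero sets in this regime, so the claim that it is ``comparable to a positive power of the largest frequency'' would itself need a case analysis comparable in depth to the paper's. In short: the strategy is sound, but the execution the paper carries out is organized around a different (and cleaner) resonance identity and a different worst case than the ones you single out.
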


\begin{proof}
	We will provide a detailed proof for the estimate \eqref{trilinear estimate_1}. The proof of the estimate \eqref{trilinear estimate_2} follows with an analogous argument.
	
	Given  $0<\epsilon\ll 1$, let us define $b'=-\frac{1}{2}+2\epsilon$ and $b=\frac{1}{2}+\epsilon$. In this setting, to get \eqref{trilinear estimate_1} we need to prove that
	\begin{equation}\label{Estimativ_Bilinearl_Prop}
		\|\partial_x (v w_1 w_2)\|_{X^{s,-\frac{1}{2}+2\epsilon}_p}\lesssim
		\|v\|_{X^{s,\frac{1}{2}+\epsilon}_p}
		\|w_1\|_{X^{s,\frac{1}{2}+\epsilon}_{p,\alpha}} 
		\|w_2\|_{X^{s,\frac{1}{2}+\epsilon}_{p,\alpha}},
	\end{equation}
	for $s> \frac{1}{4}-\frac1{p}$.
	
	Using a duality argument, one has
	\begin{equation}\label{dual-I}
		\|\partial_x(v w_1 w_2)\|_{X^{s,-\frac{1}{2}+2\epsilon}_p}=\sup_{
			\begin{smallmatrix}
				\widetilde{z} \,\in\, X^{-s,\frac{1}{2}-2\epsilon}_{p'}\\
				\|\widetilde{z}\|\leq 1	\end{smallmatrix}}\left|\int_{\mathbb{R}^2}\partial_x(v w_1 w_2)\widetilde{z}\,dx dt\right|,
	\end{equation}
	where $\frac{1}{p}+\frac{1}{p'}=1$ and $\|\widetilde{z}\|=\|\widetilde{z}\|_{X^{-s,\frac{1}{2}-2s}_{p'}}$.\\
	In the light of \eqref{dual-I}, proving the trilinear estimate (\ref{Estimativ_Bilinearl_Prop}) is equivalent to proving 
	\begin{equation}\label{Estimativ_Bilinearl_Prop_2}
		\left|\int_{\mathbb{R}^2}  v w_1 w_2 \langle\partial_x\rangle^s \partial_x z\, dxdt\right|\lesssim 
		\|v\|_{X^{s,\frac{1}{2}+\epsilon}_p}
		\|w_1\|_{X^{s,\frac{1}{2}+\epsilon}_{p,\alpha}} \|w_2\|_{X^{s,\frac{1}{2}+\epsilon}_{p,\alpha}} \|z\|_{X^{0,\frac{1}{2}-2\epsilon}_{p'}},
	\end{equation}
	with $\widehat{\widetilde{z}}=\langle\xi\rangle^s \widehat{z}$ e $\widehat{\langle\partial_x\rangle^s z}(\xi)=\langle\xi\rangle^s\widehat{z}$.
	
	Using Fourier transform properties, proving (\ref{Estimativ_Bilinearl_Prop_2})  is equivalent to proving
	\begin{equation}\label{Estimativ_Bilinearl_Prop_3}
		\left|\int_{\begin{smallmatrix}
				\xi+\xi_1+\xi_2+\xi_3=0\\
				\tau+\tau_1+\tau_2+\xi_3=0
		\end{smallmatrix}}\xi 
		\langle\xi\rangle^s \widehat{v} \widehat{w_1}\widehat{w_2}       
		\widehat{z}\right|
		\lesssim 
		\|v\|_{X^{s,\frac{1}{2}+\epsilon}_p}
		\|w_1\|_{X^{s,\frac{1}{2}+\epsilon}_{p,\alpha}} \|w_2\|_{X^{s,\frac{1}{2}+\epsilon}_{p,\alpha}}   
		\|z\|_{X^{0,\frac{1}{2}-2\epsilon}_{p'}}.
	\end{equation}
	
	In what follows, we use $\xi_{max},\, \xi_{med},\, \xi_{min}$ to denote a rearrangement of $\xi_1,\, \xi_2,\, \xi_3$ such that $|\xi_{max}|\geq |\xi_{med}|\geq |\xi_{min}|$.
	Since $\xi+\xi_1+\xi_2+\xi_3=0$, we have $|\xi|=|\xi_1+\xi_2+\xi_3|\lesssim |\xi_{max}|$.
	
	Also, we will use a dyadic decomposition $|\xi_j|\sim N_j$ and $|\xi|\sim N$ for $N_j, N\geq 1$ dyadic. In this case, we use the notation $N_{max}\sim |\xi_{max}|$, $N_{med}\sim |\xi_{med}|$, and $N_{min}\sim |\xi_{min}|$.
	
	We use $\sigma,\sigma_j$ to denote
	$$\sigma:=\tau-\xi^3, \quad \sigma_j:=\tau_j-\alpha \xi^3_j, \,\, j=1,2,\quad \textrm{and} \quad \sigma_3:=\tau_3- \xi^3$$
	and 
	\begin{equation*}
		\sigma_{max}:=\max\{|\sigma|,|\sigma_1|,|\sigma_2|,|\sigma_3|\}.
	\end{equation*}
	Finally, we use the notation 
	\begin{equation*}
		u_N:=P_N u \;\text{ and }\; u_n:=\Pi_n u,
	\end{equation*}
	where $P_N$ is the Littlewood-Paley projector and $\Pi_n$ is as defined in (\ref{Projetor L-P}).
	
	With these notations, using Parseval's identity and inverse Fourier transform, proving (\ref{Estimativ_Bilinearl_Prop_3}) is equivalent
	to proving
	\begin{equation}\label{Estimativ_Bilinearl_Prop_4}
		\begin{split}        
			I&:=\sum_{\begin{smallmatrix}
					N_1,N_2,N_3,N\geq 1\\
					\text{dyadic}
			\end{smallmatrix}}
			N^{s+1}\left|
			\int_{\mathbb{R}^2} 
			(w_1)_{N_1}
			(w_2)_{N_2}
			v_{N_3}
			z_N \,dx dt\right|\\
			& \lesssim 
			\|w_1\|_{X^{s,\frac{1}{2}+}_{p,\alpha}} \|w_2\|_{X^{s,\frac{1}{2}+}_{p,\alpha}} 
			\|v\|_{X^{s,\frac{1}{2}+}_p}
			\|z\|_{X^{0,\frac{1}{2}-2\epsilon}_{p'}}.
		\end{split}
	\end{equation}
	
	We prove the estimate \eqref{Estimativ_Bilinearl_Prop_4} dividing in the following four different cases
	\\
	{\bf A. Trivial cases:} $|\xi_{max}|\lesssim 1$ or $\langle \sigma_{max}\rangle\gg N_{max}^{10}$,\\
	{\bf B. Non-resonant case:} $N_{max}\gg N_{med}\geq N_{min}$,\\
	{\bf C. Semi-resonant case:} $N_{max}\sim N_{med}\gg N_{min}$,\\
	and\\
	{\bf D. Resonant case:} $N_{max}\sim N_{med}\sim N_{min}$.\\

	\noindent
	\textbf{Case A. Trivial cases:} In this case, we prove \eqref{Estimativ_Bilinearl_Prop_4} further dividing in  two subcases $|\xi_{max}|\lesssim 1$ or $\langle \sigma_{max}\rangle\gg \langle\xi_{max}\rangle^{10}$.\\
	
	\noindent
	\textbf{Subcase A1.} $|\xi_{max}|\lesssim 1$: In this case $|\xi|=|\xi_1+\xi_2+\xi_3|\leq 3|\xi_{max}|\lesssim 1$ and $N\lesssim N_{max}$. Thus,
	using the fact $s+1=\frac{5}{4}-\frac1p>0$,  we apply H\"older's inequality, followed by Bernstein's inequality, to get
	\begin{equation*}
		\begin{split}
			I 
			& \lesssim \sum_{N_{max},N\lesssim 1} N^{s+1}_{max}
			\left|\int_{\mathbb{R}^2}
			(w_1)_{N_1}
			(w_2)_{N_2} v_{N_3} z_N dxdt\right|\\
			&  \lesssim \sum_{N_{max},N\lesssim 1} N^{s+1}_{max}
			\|(w_1)_{N_1}\|_{L^2_{x,t}}
			\|(w_2)_{N_2}\|_{L^\infty_{x,t}}
			\|v_{N_3}\|_{L^\infty_{x,t}}
			\|z_N\|_{L^2_{x,t}}\\
			& \lesssim \sum_{N_{max},N\lesssim 1} N^{s+2}_{max}
			\|(w_1)_{N_1}\|_{L^2_{x,t}}
			\|(w_2)_{N_2}\|_{L^\infty_t L^2_x}
			\|v_{N_3}\|_{L^\infty_t L^2_x}
			\|z_N\|_{L^2_{x,t}}.
		\end{split}
	\end{equation*}
	Next, we will apply the Sobolev embedding theorem, the definitions of the norms $X^{s,b}_\alpha(\mathbb{R}^2)$ and $X^{s,b}(\mathbb{R}^2)$, and their respective properties, to obtain
	\begin{equation*}
		\begin{split}
			I             
			& \lesssim \sum_{N_{max},N\lesssim 1} N^{s+2}_{max}
			\|(w_1)_{N_1}\|_{X^{0,\frac{1}{2}+\epsilon}_\alpha}
			\|(w_2)_{N_2}\|_{X^{0,\frac{1}{2}+\epsilon}_\alpha}
			\|v_{N_3}\|_{X^{0,\frac{1}{2}+\epsilon}}       
			\|z_N\|_{X^{0,\frac{1}{2}-2\epsilon}}\\
			& \sim \sum_{N_{max},N\lesssim 1} N^{s+2}_{max}
			N_1^{-s}\|(w_1)_{N_1}\|_{X^{s,\frac{1}{2}+\epsilon}_\alpha}
			N_2^{-s}\|(w_2)_{N_2}\|_{X^{s,\frac{1}{2}+\epsilon}_\alpha}
			N_3^{-s}
			\|v_{N_3}\|_{X^{0,\frac{1}{2}+\epsilon}}
			\|z_N\|_{X^{0,\frac{1}{2}-2\epsilon}}\\
			& \lesssim \sum_{N_{max},N\lesssim 1} N^{-2s+2}_{max}
			\|(w_1)_{N_1}\|_{X^{s,\frac{1}{2}+\epsilon}_\alpha}
			\|(w_2)_{N_2}\|_{X^{s,\frac{1}{2}+\epsilon}_\alpha}
			\|v_{N_3}\|_{X^{0,\frac{1}{2}+\epsilon}}
			\|z_N\|_{X^{0,\frac{1}{2}-2\epsilon}}.
		\end{split}
	\end{equation*}
	Finally, using the Lemma \ref{imersao dos espacos de Bourgain tipo modulacao}, one gets
	\begin{equation}\label{casos triviales_1}
		\begin{split}
			I                 
			& \lesssim \sum_{N_{max},N\lesssim 1} N^{-2s+2}_{max}
			N_1^{\frac{1}{2}-\frac{1}{p}}\|(w_1)_{N_1}\|_{X^{s,\frac{1}{2}+\epsilon}_{p,\alpha}}
			N_2^{\frac{1}{2}-\frac{1}{p}}\|(w_2)_{N_2}\|_{X^{s,\frac{1}{2}+\epsilon}_{p,\alpha}}
			N_3^{\frac{1}{2}-\frac{1}{p}}
			\|v_{N_3}\|_{X^{0,\frac{1}{2}-2\epsilon}_p}
			\|z_N\|_{X^{0,\frac{1}{2}-2\epsilon}_{p'}},
		\end{split}
	\end{equation}
	 where in this case $N_{\max}^a \sim 1$ for $a \in \R$ is used. Summing over the dyadic blocks $ 1\leq N_1,N_2,N_3,N\lesssim 1$, estimate (\ref{casos triviales_1}) yields the required estimate (\ref{Estimativ_Bilinearl_Prop_4}).
	\\
	
	\noindent
	\textbf{Subcase A2.} $\langle \sigma_{max}\rangle\gg \langle\xi_{max}\rangle^{10}$: First, we consider the case $\langle \sigma_{max}\rangle=\langle \sigma_2\rangle$.  As in the previous
	case, we apply H\"older's inequality and Bernstein's inequality, to obtain
	\begin{equation*}
		\begin{split}
			I  \lesssim \sum_{N_1,N_2,N_3,N\geq 1} N^{s+2}_{max} \|(w_1)_{N_1}\|_{L^\infty_t L^2_x}
			\|(w_2)_{N_2}\|_{L^2_{x,t}}
			\|v_{N_3}\|_{L^\infty_t L^2_x}
			\|z_N\|_{L^2_{x,t}}.
		\end{split}
	\end{equation*}
	In the following, using the definitions of $X^{s,b}_\alpha(\mathbb{R}^2)$ and  $X^{s,b}(\mathbb{R}^2)$ norms and their properties, we get
	\begin{equation*}
		\begin{split}
			I 
			& \lesssim \sum_{N_1,N_2,N_3,N\geq 1} N^{s+2}_{max} 
			\|(w_1)_{N_1}\|_{X^{0,\frac{1}{2}+\epsilon}_\alpha}
			\|\langle\sigma_2\rangle^{-\frac{1}{2}-\epsilon}(w_2)_{N_2}\|_{X^{0,\frac{1}{2}+\epsilon}_\alpha}
			\|v_{N_3}\|_{X^{0,\frac{1}{2}+\epsilon}}
			\|z_N\|_{X^{0,\frac{1}{2}-2\epsilon}}\\     
			& \lesssim \sum_{N_1,N_2,N_3,N\geq 1} 
			N^{s+2}_{max} 
			\frac{1}{N_{max}^{5+10\epsilon}}
			N_1^{-s}\|(w_1)_{N_1}\|_{X^{s,\frac{1}{2}+\epsilon}_\alpha}
			N_2^{-s}\|(w_2)_{N_2}\|_{X^{s,\frac{1}{2}+\epsilon}_\alpha}
			N_3^{-s}\|v_{N_3}\|_{X^{0,\frac{1}{2}+\epsilon}}\\
			& \hspace{0.5cm} \times
			\|z_N\|_{X^{0,\frac{1}{2}-2\epsilon}}.
		\end{split}
	\end{equation*}
	Using the Lemma \ref{imersao dos espacos de Bourgain tipo modulacao}, one obtains
	\begin{equation}\label{caso trivial_2}
		\begin{split}
			I           
			& \lesssim \sum_{N_1,N_2,N_3,N\geq 1} 
			N^{-2s-3-10\epsilon+\frac{3}{2}-\frac{3}{p}}_{max}            
			\|(w_1)_{N_1}\|_{X^{s,\frac{1}{2}+\epsilon}_{p,\alpha}}
			\|(w_2)_{N_2}\|_{X^{s,\frac{1}{2}+\epsilon}_{p,\alpha}}
			\|v_{N_3}\|_{X^{0,\frac{1}{2}+\epsilon}_p}\\
			& \hspace{0.5cm}\times
			\|z_N\|_{X^{0,\frac{1}{2}-2\epsilon}_{p'}}
		\end{split}
	\end{equation}
	Since that $-2s-\frac32-\frac3{p}-10\epsilon<0$, summing over the dyadic blocks $N_1,N_2,N_3,N\geq 1$, the estimate (\ref{caso trivial_2}) yields the required estimate (\ref{Estimativ_Bilinearl_Prop_4}).
	
	The complementary cases $\langle\sigma_{max}\rangle=\langle\sigma_1\rangle$ or $\langle\sigma_{max}\rangle=\langle\sigma_3\rangle$ can be handled in an analogous manner.

	In what follows, we consider the case where
	\begin{equation}\label{casos_restantes}
		|\xi_{max}|\gg 1 \,\,\text{ and }\,\, \langle\sigma_{max}\rangle\lesssim
		\langle\xi_{max}\rangle^{10}.
	\end{equation}
	We split this into three diferent cases: the non-resonant case $N_{max}\gg N_{med}\geq N_{min}$,  the semi-resonant case $N_{max}\sim N_{med}\gg N_{min}$ and the resonant case $N_{max}\sim N_{med}\sim N_{min}$.	
\\

\noindent
	\textbf{Case B. Non-resonant case:} $N_{max}\gg N_{med}\geq N_{min}$\textbf{.}
	Using the symmetry of the nonlinearity, we can assume $N_1\gg N_2\geq N_3$. We observe that $N\lesssim 3N_{max}=3N_1$ and 
	$$N\sim |\xi|=|\xi_1+\xi_2+\xi_3|\geq |\xi_1|-|\xi_2|-\xi_3| \geq N_1 /2,$$
	so that $N\sim N_1$.
	
	Applying H\"older's inequality, Corollary \ref{CorolPPi1}, and condition (\ref{casos_restantes}), we obtain
	\begin{equation*}
		\begin{split}
			I 
			& \lesssim \sum_{N_1\sim N\gg N_2\geq N_3} N^{s+1}
			\frac{1}{N_1}
			\|(w_1)_{N_1}\|_{X^{0,\frac{1}{2}+\epsilon}_\alpha}
			\|(w_2)_{N_2}\|_{X^{0,\frac{1}{2}+\epsilon}_\alpha}
			\frac{1}{N}
			\|v_{N_3}\|_{X^{0,\frac{1}{2}+\epsilon}}
			\|z_N\|_{X^{0,\frac{1}{2}+\epsilon}}\\
			& \lesssim \sum_{N_1\sim N\gg N_2\geq N_3} 
			N^{s-1}
			\|(w_1)_{N_1}\|_{X^{0,\frac{1}{2}+\epsilon}_\alpha}
			\|(w_2)_{N_2}\|_{X^{0,\frac{1}{2}+\epsilon}_\alpha}
			\|v_{N_3}\|_{X^{0,\frac{1}{2}+\epsilon}}
			N^{30\epsilon}\|z_N\|_{X^{0,\frac{1}{2}-2\epsilon}}\\
			& \sim \sum_{N_1\sim N\gg N_2\geq N_3} 
			N^{s-1+30\epsilon-s}
			\|(w_1)_{N_1}\|_{X^{s,\frac{1}{2}+\epsilon}_\alpha}
			N_2^{-s}
			\|(w_2)_{N_2}\|_{X^{s,\frac{1}{2}+\epsilon}_\alpha}
			N_3^{-s}
			\|v_{N_3}\|_{X^{s,\frac{1}{2}+\epsilon}}      
			\|z_N\|_{X^{0,\frac{1}{2}-2\epsilon}}.        
		\end{split}
	\end{equation*}
	Using Lemma \ref{imersao dos espacos de Bourgain tipo modulacao}, we obtain
	\begin{equation}\label{non resonant case}
		\begin{split}
			I             
			& \lesssim 
			\sum_{N_1\sim N\gg N_2\geq N_3} 
			N^{-\frac{1}{2}+30\epsilon-\frac{1}{p}}   (N_2N_3)^{-s+\frac12-\frac{1}{p}} 
			\|(w_1)_{N_1}\|_{X^{s,\frac{1}{2}+\epsilon}_{p,\alpha}}
			\|(w_2)_{N_2}\|_{X^{s,\frac{1}{2}+\epsilon}_{p,\alpha}}
			\|v_{N_3}\|_{X^{s,\frac{1}{2}+\epsilon}_p}     
			\|z_N\|_{X^{0,\frac{1}{2}-2\epsilon}_{p'}}
		\end{split}
	\end{equation}
	Observe that if $-s+\frac12-\frac{1}{p}\leq 0$, then 
	$N^{-\frac{1}{2}+30\epsilon-\frac{1}{p}}   (N_2N_3)^{-s+\frac12-\frac{1}{p}} \lesssim N^{-\frac{1}{2}+30\epsilon-\frac{1}{p}}$
	and if $-s+\frac12-\frac{1}{p}> 0$, then 
	$
	N^{-\frac{1}{2}+30\epsilon-\frac{1}{p}} (N_2N_3)^{-s+\frac12-\frac{1}{p}} \leq N^{\frac12-2s -\frac{3}{p}+30\epsilon}
	$, where $\frac12-2s -\frac{3}{p}+30\epsilon<0$, since $s>\frac14-\frac3{2p}$.

	Applying in both cases \eqref{N-epsilon} with $\epsilon$ small, we get
	\eqref{Estimativ_Bilinearl_Prop_4}.
	\\
	
\noindent
	\textbf{Case C. Semi-resonant case:} $N_{max}\sim N_{med}\gg N_{min}$\textbf{.} Without loss of generality, it is enough to consider $N_{max}=N_1$ and $N_{med}=N_2$. We further divide this case into  two subcases, depending on the relationship between $N_{max}$ and $N$.\\
	
	\noindent
	\textbf{Subcase C1.} $N\ll N_{max}$\textbf{:} In this subcase, we have $N\ll N_2$ and $N_3\ll N_1$. Therefore, using H\"older's inequality and Corollary \ref{CorolPPi1}, we obtain
	\begin{equation*}
		\begin{split}
			I 
			& \lesssim \sum_{N_1\sim N_2\gg N_3,N}
			N^{s+1}
			\|(w_1)_{N_1} v_{N_3}\|_{L^2_{x,t}}
			\|(w_2)_{N_2} z_N\|_{L^2_{x,t}}\\
			& \lesssim \sum_{N_1\sim N_2\gg N_3,N}
			N^{s+1+30\epsilon}_{max}
			N_{max}^{-2}
			\|(w_1)_{N_1}\|_{X^{0,\frac{1}{2}+\epsilon}_\alpha}
			\|v_{N_3}\|_{X^{0,\frac{1}{2}+\epsilon}}
			\|(w_2)_{N_2}\|_{X^{0,\frac{1}{2}+\epsilon}_\alpha}
			\|z_N\|_{X^{0,\frac{1}{2}-2\epsilon}}\\
			& \sim \sum_{N_1\sim N_2\gg N_3,N}
			N^{-s-1+30\epsilon}_{max}
			\|(w_1)_{N_1}\|_{X^{s,\frac{1}{2}+\epsilon}_\alpha}
			N_3^{-s}\|v_{N_3}\|_{X^{s,\frac{1}{2}+\epsilon}}
			\|(w_2)_{N_2}\|_{X^{s,\frac{1}{2}+\epsilon}_\alpha}
			\|z_N\|_{X^{0,\frac{1}{2}-2\epsilon}}.
		\end{split}
	\end{equation*}
	Using Lemma \ref{imersao dos espacos de Bourgain tipo modulacao} and $s\leq \frac{1}{2}-\frac{1}{p}$ for $p\geq 2$, we obtain
	\begin{equation}\label{semi-resonant case}
		\begin{split}
			I
			& \lesssim \sum_{N_1\sim N_2\gg N_3,N}
			N^{-s-\frac{2}{p}+30\epsilon}_{max}
			N_{3}^{\frac{1}{2}-\frac{1}{p}-s}
			\|(w_1)_{N_1}\|_{X^{s,\frac{1}{2}+\epsilon}_{p,\alpha}}
	\|v_{N_3}\|_{X^{s,\frac{1}{2}+\epsilon}_p}
			\|(w_2)_{N_2}\|_{X^{s,\frac{1}{2}+\epsilon}_{p,\alpha}}
			\|z_N\|_{X^{0,\frac{1}{2}-2\epsilon}_{p'}}\\
			& \leq \sum_{N_1\sim N_2\gg N_3,N}
			N^{-2s-\frac{3}{p}+\frac12+30\epsilon}_{max}            
			\|(w_1)_{N_1}\|_{X^{s,\frac{1}{2}+\epsilon}_{p,\alpha}}
			\|v_{N_3}\|_{X^{s,\frac{1}{2}+\epsilon}}      
			\|(w_2)_{N_2}\|_{X^{s,\frac{1}{2}+\epsilon}_{p,\alpha}}
			\|z_N\|_{X^{0,\frac{1}{2}-2\epsilon}_{p'}}
		\end{split}
	\end{equation}
	where $-2s-\frac{3}{p}+\frac12+30\epsilon<0$ since $s>\frac14- \frac3{2p}$ and taking $0<30\epsilon < 2s+\frac{3}{p}-\frac12$. Applying \eqref{N-epsilon} in the estimate (\ref{semi-resonant case}), we obtain \eqref{Estimativ_Bilinearl_Prop_4}.
	\\
	
	\noindent
	\textbf{Subcase C2.} $N\sim N_{max}$\textbf{:} In this case, the condition $\xi_1+\xi_2+\xi_3+\xi=0$, implies that $|\xi_1+\xi_2+\xi|=|\xi_3|\sim N_3\ll N\sim N_{max}$. Consequently, not all $\xi_1,\, \xi_2,\, \xi$ can have the same sign. Therefore, we must have $\xi_1 \xi_2 < 0$ or $\xi_1 \xi < 0$ or $\xi_2 \xi < 0$.\\
	We will consider the case $\xi_1 \xi_2 < 0$, the other cases are similar.
	
	First note that
	\begin{enumerate}
		\item [$\bullet$] $|\xi-\xi_3||\xi+\xi_3|
		\sim  N_{max}^2$.
		\item [$\bullet$] $|\xi_1-\xi_2||\xi_1+\xi_2|= \left(|\xi_1|+|\xi_2|\right)|\xi+\xi_3|
		\sim N_{max}^2$.
	\end{enumerate}
	
	Now, similarly as above, using H\"older's inequality, Lemma \ref{Op de Riesz},  Lemma \ref{imersao dos espacos de Bourgain tipo modulacao}, and Corollary \ref{CorolPPi1} we get
	\begin{equation*}
		\begin{split}
			I & = \sum_{N_1\sim N_2\sim N\gg N_3}
			N^{s+1}\left|\int_{\mathbb{R}^2}(w_1)_{N_1} (w_2)_{N_2} v_{N_3} z_N\, dx dt\right|\\
			& \leq \sum_{N_1\sim N_2\sim N\gg N_3}
			N^{s+1}
			\|(w_1)_{N_1} (w_2)_{N_2}\|_{L^2_{x,t}}
			\|z_N v_{N_3}\|_{L^2_{x,t}}\\
			& \lesssim 
			\sum_{N_1\sim N_2\sim N\gg N_3}
			N_{max}^{s+1}
			\frac{1}{N_{max}}\|(w_1)_{N_1}\|_{X^{0,\frac{1}{2}+\epsilon}_\alpha} 
			\|(w_2)_{N_2}\|_{X^{0,\frac{1}{2}+\epsilon}_\alpha}
			\frac{1}{N}
			\|z_N\|_{X^{0,\frac{1}{2}+\epsilon}}
			\|v_{N_3}\|_{X^{0,\frac{1}{2}+\epsilon}}\\
			& \lesssim 
			\sum_{N_1\sim N_2\sim N\gg N_3}
			N^{-s-\frac2{p}+30\epsilon}_{\max}N_3^{-s+\frac12-\frac1{p}}        
			\|(w_1)_{N_1}\|_{X^{s,\frac{1}{2}+\epsilon}_{p,\alpha}} 
			\|(w_2)_{N_2}\|_{X^{s,\frac{1}{2}+\epsilon}_{p,\alpha}}            
			\|v_{N_3}\|_{X^{s,\frac{1}{2}+\epsilon}_p}\|z_N\|_{X^{0,\frac{1}{2}-2\epsilon}_{p'}}.
		\end{split}
	\end{equation*}

Finally, as in \eqref{semi-resonant case}, applying \eqref{N-epsilon}, we obtain the required estimate \eqref{Estimativ_Bilinearl_Prop_3}.
	\\
	
	\noindent
	\textbf{Case D. Resonant case:} $N_{max}\sim N_{med}\sim N_{min}$ \textbf{:}
	Since $N\sim |\xi|=|\xi_1+\xi_2+\xi_3|\leq |\xi_1|+|\xi_2|+|\xi_3|\lesssim N_1$, in this case, we can assume $N\sim N_1$ because, for $N\ll N_1$ the proof follows as above. So, from now on we consider
	\begin{equation}\label{condition resonant case}
		N\sim N_1\sim N_2\sim N_3\gg1.
	\end{equation}
	
	For further analysis we will use the unit cube decomposition $u=\sum_{n\in\mathbb{Z}} u_n=\sum_{n\in\mathbb{Z}} \Pi_n u$ and for
	the sake of clarity we use the notation $I_n:=[n,n+1)$ with $n\in\mathbb{Z}$. Now, we proceed to derive the required estimate dividing in  several sub-cases.
	\\
	
	\noindent
	\textbf{Subcase D1.} $|\xi_i-\xi_j|\geq |\xi_i+\xi_j|$ \textbf{for some pair} $(i,j) $\textbf{:} Without loss of generality, we will assume $(i,j)=(1,2)$ and proceed by considering five different cases.\\
	
	\noindent
	\textbf{Sub-subcase D1.1.} $|\xi_1+\xi_2|\lesssim 1$ \textbf{and} $\kappa=\min (|\xi_1-\sqrt{\alpha}\, \xi_3|,|\xi_1+\sqrt{\alpha}\, \xi_3|)\lesssim 1$\textbf{:} In order to simplify the notation let $\mathring{\alpha}:=\sqrt{\alpha}$. We will only consider $\kappa=|\xi_1+\mathring{\alpha} \xi_3|\lesssim 1$, the other case is similar.

	Suppose that $\xi_1\in I_n=[n,n+1)$, so that $\xi_1=n+O(1)$,  $\xi_2=-n+O(1)$, $\xi_1+\mathring{\alpha} \xi_3=O(1)$. Therefore $\xi_3=-\frac{n}{\mathring{\alpha}}+O(1)$ and 
	$$
		\xi=-\xi_1-\xi_2-\xi_3=-n-(-n)-(-\frac{n}{\mathring{\alpha}}) +O(1)=\frac{n}{\mathring{\alpha}}+ O(1).
	$$
	For the sake of clarity, we assume $\xi_2\in I_{-n}$, $\xi\in I_n^{\alpha}=[\frac{n}{\mathring{\alpha}}, \frac{n+1}{\mathring{\alpha}})$ ($\R=\cup_n I_n^{\alpha}$) and $\xi_3\in I_{-n}^{\alpha}$.
	Now, using Cauchy Schwartz inequality, (\ref{casos_restantes})  and Proposition~$2.3$ of \cite{Carvajal_and_Panthee} for $s>-\frac{1}{2}$ and $0<\alpha<1$,  we get
	\begin{equation*}
		\begin{split}
			I
			& :=\sum_{n\in\mathbb{Z}}\langle                                        n\rangle^{s+1}\left|\int_{\mathbb{R}^2}(w_1)_n (w_2)_{-n}
			v_{-n}^{\mathring{\alpha}} z_n^{\mathring{\alpha}}\,dxdt\right|\\
			& = \sum_{n\in\mathbb{Z}}\langle n\rangle^{s+1} 
			\left|\int_{\mathbb{R}^2}\mathcal{F}_{x,t}\{(w_1)_n (w_2)_{-n}
			v_{-n}^{\mathring{\alpha}}\}(\xi,\tau) \mathcal{F}_{x,t} \{z_n^{\mathring{\alpha}}\}(\xi,\tau)\,d\xi d\tau\right|\\
			& \leq \sum_{n\in\mathbb{Z}}\langle n\rangle^{s+1} 
			\|(w_1)_n (w_2)_{-n}
			v_{-n}^{\mathring{\alpha}}\|_{X^{0,-\frac{1}{2}+2\epsilon}} \|z_n^{\mathring{\alpha}}\|_{X^{0,\frac{1}{2}-2\epsilon}}\\
			& \sim \sum_{n\in\mathbb{Z}} 
			\|\partial_x((w_1)_n (w_2)_{-n}
			v_{-n}^{\mathring{\alpha}})\|_{X^{s,-\frac{1}{2}+2\epsilon}} 
			\|z_n^{\mathring{\alpha}}\|_{X^{0,\frac{1}{2}-2\epsilon}}\\
			& \lesssim \sum_{n\in\mathbb{Z}}
			\|(w_1)_n\|_{X^{s,\frac{1}{2}+\epsilon}_\alpha}
			\|(w_2)_{-n}\|_{X^{s,\frac{1}{2}+\epsilon}_\alpha}
			\|v_{-n}^{\mathring{\alpha}}\|_{X^{s,\frac{1}{2}+\epsilon}}
			\|z_n^{\mathring{\alpha}}\|_{X^{0,\frac{1}{2}-2\epsilon}}.
		\end{split}
	\end{equation*}
	where $v_{-n}^{\mathring{\alpha}}$ and $z_n^{\mathring{\alpha}}$ are defined as in \eqref{Projetor L-Pg}.
	
	Next, using Holder's inequality twice, we get
	\begin{equation*}
		\begin{split}
			I & \lesssim \left\| \|(w_1)_n\|_{X^{s,\frac{1}{2}+\epsilon}_\alpha}
			\|(w_2)_{-n}\|_{X^{s,\frac{1}{2}+\epsilon}_\alpha}
			\|v_{-n}^{\mathring{\alpha}}\|_{X^{s,\frac{1}{2}+\epsilon}}\right\|_{\ell^p(\mathbb{Z})}
			\left\|\|z_n^{\mathring{\alpha}}\|_{X^{0,\frac{1}{2}-2\epsilon}}\right\|_{\ell^{p'}(\mathbb{Z})}\\
			& \leq \left\|\|(w_1)_n\|_{X^{s,\frac{1}{2}+\epsilon}_\alpha}\right\|_{\ell^{3p}(\mathbb{Z})}
			\left\|\|(w_2)_{-n}\|_{X^{s,\frac{1}{2}+\epsilon}_\alpha}\right\|_{\ell^{3p}(\mathbb{Z})}
			\left\|\|v_{-n}^{\mathring{\alpha}}\|_{X^{s,\frac{1}{2}+\epsilon}}\right\|_{\ell^{3p}(\mathbb{Z})}
			\left\|\|z_n^{\mathring{\alpha}}\|_{X^{0,\frac{1}{2}-2\epsilon}}\right\|_{\ell^{p'}(\mathbb{Z})}\\
			& \sim \|w_1\|_{X^{s,\frac{1}{2}+\epsilon}_{3p,\alpha}}
			\|w_2\|_{X^{s,\frac{1}{2}+\epsilon}_{3p,\alpha}}
			\|v\|_{X^{s,\frac{1}{2}+\epsilon}_{3p}}
			\|z\|_{X^{0,\frac{1}{2}-2\epsilon}_{p'}}\\
			& \leq \|w_1\|_{X^{s,\frac{1}{2}+\epsilon}_{p,\alpha}}
			\|w_2\|_{X^{s,\frac{1}{2}+\epsilon}_{p,\alpha}}
			\|v\|_{X^{s,\frac{1}{2}+\epsilon}_{p}}
			\|z\|_{X^{0,\frac{1}{2}-2\epsilon}_{p'}}.
		\end{split}
	\end{equation*}
	
	\noindent
	\textbf{Sub-subcase D1.2.} $|\xi_1+\xi_2|\lesssim 1$ \,\textbf{and}\, $|\xi_1\pm\mathring{\alpha}\, \xi_3|\gg 1$\textbf{:}  
	In this sub-case we assume that $\xi_1\in I_n$ and $\xi_3\in I_m$, so that $\xi_1=n+O(1)$, $\xi_2=-n+O(1)$, $\xi_3=m+O(1)$ and $\xi=-\xi_1-\xi_2-\xi_3= -n-(-n)-m+O(1)=-m+O(1)$.
	
	Again, for simplicity, we estimate only the case when \,$\xi_2\in I_{-n}$ and $\xi\in I_{-m}$.
	From Proposition~\ref{O(1)}, we have $|\xi_1\pm\alpha \xi_3|\sim |n\pm \alpha m|\gg 1$. Also, recall that from 
	(\ref{casos_restantes}) and (\ref{condition resonant case}), in this case, $|m|\sim|n|\gg 1$.\\
	Without loss of generality, we can assume $|\xi_1+\mathring{\alpha} \xi_3|\geq |\xi_1-\mathring{\alpha} \xi_3|$.
	Using Holder's inequality, Corollary
	\ref{CorolPPi1}  twice, (\ref{casos_restantes}), \,and\, Proposition \ref{O(1)}, we obtain:
	\begin{equation}\label{sub subcase 4.1.2}
		\begin{split}
			\sum_{\begin{smallmatrix}
				|m|\sim |n|\gg 1\\|n \pm \mathring{\alpha} m| \gg  1
		\end{smallmatrix}} 
		|m|^{s+1}&\left| \int_{\mathbb{R}^2} (w_1)_n (w_2)_{-n} v_{m} z_{-m}\, dxdt\right|
		\leq
		\sum_{\begin{smallmatrix}
				|m|\sim |n|\gg 1\\ |n \pm \mathring{\alpha} m| \gg  1
		\end{smallmatrix}}
		|m|^{s+1}\|(w_1)_n v_m\|_{L^2_{x,t}} \|(w_2)_{-n} z_{-m}\|_{L^2_{x,t}}
		\\
		& \lesssim 
			\sum_{\begin{smallmatrix}
					|m|\sim|n|\gg 1\\|n \pm \mathring{\alpha} m| \gg  1
			\end{smallmatrix}}
			\frac{|n|^{s+1}}{|n+\mathring{\alpha} m||n-\mathring{\alpha} m|}
			\|(w_1)_n \|_{X_\alpha^{0,\frac{1}{2}+}}
			\|v_m \|_{X^{0,\frac{1}{2}+}}
			\|(w_2)_{-n}\|_{X_\alpha^{0,\frac{1}{2}+}}
			\|z_{-m} \|_{X^{0,\frac{1}{2}+}}\\
			& \lesssim 
			\sum_{\begin{smallmatrix}
					|m|\sim |n|\gg 1\\|n \pm \mathring{\alpha} m| \gg  1
			\end{smallmatrix}}
			\frac{|n|^{-2s+1+\epsilon}}{|n \pm \mathring{\alpha} m|(|n|+|\mathring{\alpha} m|)}
			\|(w_1)_n\|_{X_\alpha^{s,\frac{1}{2}+}}
			\|v_m\|_{X^{s,\frac{1}{2}+}}
			\|(w_2)_{-n}\|_{X_\alpha^{s,\frac{1}{2}+}}
			\|z_{-m} \|_{X^{0,\frac{1}{2}-}}\\
			& \sim
			\sum_{\begin{smallmatrix}
					|m|,|n|\gg 1\\|n \pm \mathring{\alpha} m| \gg  1
			\end{smallmatrix}}
			\frac{1}{|n-\mathring{\alpha} m| |n|^{2s-\epsilon}}
			\|(w_1)_n \|_{X_\alpha^{s,\frac{1}{2}+}}
			\|v_m \|_{X^{s,\frac{1}{2}+}}
			\|(w_2)_{-n}\|_{X_\alpha^{s,\frac{1}{2}+}}
			\|z_{-m}\|_{X^{0,\frac{1}{2}-}}\\
			& =
			\sum_{\begin{smallmatrix}
					|m|,|n|\gg 1\\|n \pm \mathring{\alpha} m| \gg  1
			\end{smallmatrix}}
			\frac{1}{|n-\mathring{\alpha} m|\langle n\rangle^{2s-\epsilon}}
			a_n
			b_n
			c_m
			d_m
		\end{split}
	\end{equation}
	where $a_n=\|(w_1)_n \|_{X_\alpha^{s,\frac{1}{2}+}}$, $b_n=\|(w_2)_{-n}\|_{X_\alpha^{s,\frac{1}{2}+}}$, $c_m=\|v_m \|_{X^{s,\frac{1}{2}+}}$, and $d_m=\|z_{-m}\|_{X^{0,\frac{1}{2}-}}$.

	Notice that
	\begin{equation}\label{x1sub subcase 4.1.2xw}
		\begin{split}
	\sum_{\begin{smallmatrix}
					|m|,|n|\gg 1\\|n \pm \mathring{\alpha} m| \gg  1
			\end{smallmatrix}}
			\frac{1}{|n-\mathring{\alpha} m|\langle n\rangle^{2s-\epsilon}}
			a_n
			b_n
			c_m
			d_m&\leq \sum_{|n-\mathring{\alpha} m|\geq |n|\geq1}\frac{1}{|n-\mathring{\alpha} m|\langle n\rangle^{2s-\epsilon}}
			a_n
			b_n
			c_m
			d_m\\
			&\quad +\sum_{1 \leq |n-\mathring{\alpha} m|< |n|}\frac{1}{|n-\mathring{\alpha} m|\langle n\rangle^{2s-\epsilon}}
			a_n
			b_n
			c_m
			d_m\\
			&
			=:T_1+T_2
		\end{split}
	\end{equation}
	
	To estimate $T_1$, we use Holder's inequality, to get
	\begin{equation}\label{x1sub subcase 4.1.2xw3}
		\begin{split}
	T_1\leq \sum_{ |n-\mathring{\alpha} m|\geq |n|\geq 1}\frac{1}{\langle n\rangle^{1+2s-\epsilon}}
			a_n
			b_n
			c_m
			d_m&\leq \sum_n \frac{1}{\langle n\rangle^{1+2s-\epsilon}} a_n b_n\sum _m c_m d_m\\
			&\leq \sum_n \frac{1}{\langle n\rangle^{1+2s-\epsilon}} a_n b_n\|c_m\|_{\ell^p}\|d_m\|_{\ell^{p'}}\\
			&\leq \|a_n\|_{\ell^p}\left\|\frac{b_n}{\langle n\rangle^{1+2s-\epsilon}}\right\|_{\ell^{p'}} \|c_m\|_{\ell^p}\|d_m\|_{\ell^{p'}}\\
			&\leq \|a_n\|_{\ell^p}\|b_n\|_{\ell^p}\left\|\frac{1}{\langle n\rangle^{1+2s-\epsilon}}\right\|_{\ell^{\frac{p}{p-2}}} \|c_m\|_{\ell^p}\|d_m\|_{\ell^{p'}}\\
			&\lesssim
			\|a_n\|_{\ell^p}\|b_n\|_{\ell^p} \|c_m\|_{\ell^p}\|d_m\|_{\ell^{p'}},
	\end{split}
	\end{equation}
	where the condition $\frac{(1+2s)p}{p-2}>1$ for  $p>2$ is used, which is indeed true since $s>\frac14-\frac{1}{p}>-\frac{1}{p}$. The case $p=2$ is similar considering $\ell^{\infty}$ instead of $\ell^{\frac{p}{p-2}}$. 
	
	Similarly, to estimate $T_2$, we use Holder's inequality, Minkowsky inequality and inequality \eqref{sumcomreal}, and obtain
	\begin{equation}\label{x1sub subcase 4.1.2xw4}
		\begin{split}
	T_2&\leq \sum_{ 1\leq |n-\mathring{\alpha} m|< |n|}\frac{1}{\langle  n-\mathring{\alpha}a m\rangle^{1+2s-\epsilon}}
			a_n
			b_n
			c_m
			d_m\\
			&\leq \sum_n  a_n b_n\sum _m \frac{c_m d_m}{\langle n-\mathring{\alpha} m\rangle^{1+2s-\epsilon}}\\
			&\leq \|a_n\|_{\ell^p}\left\|b_n \sum _m \frac{c_m d_m}{\langle n-\mathring{\alpha} m\rangle^{1+2s-\epsilon}}\right\|_{\ell^{p'}_n} \\
			&\leq \|a_n\|_{\ell^p}\|b_n\|_{\ell^p} \left\|\sum _m \frac{c_m d_m}{\langle n-\mathring{\alpha} m\rangle^{1+2s-\epsilon}}\right\|_{\ell^{\frac{p}{p-2}}_n }\\
			&\leq \|a_n\|_{\ell^p}\|b_n\|_{\ell^p} \sum_m |c_m d_m|\left\|\frac{1}{\langle n-\mathring{\alpha} m\rangle^{1+2s-\epsilon}}\right\|_{\ell^{\frac{p}{p-2}}_n }\\
			&\lesssim
			\|a_n\|_{\ell^p}\|b_n\|_{\ell^p} \|c_m\|_{\ell^p}\|d_m\|_{\ell^{p'}}.
	\end{split}
	\end{equation}
\\

\noindent
	\textbf{Sub-subcase D1.3.} $|\xi_1+\xi_2|\gg 1$ \,\textbf{and}\, $|\xi_i-\xi_j|\lesssim 1$ \textbf{for some} $(i,j)\neq (1,2)$: Without loss of generality, we may assume $(i,j)=(1,3)$. We will prove the required estimate in considering  $|\xi-\xi_3|\lesssim 1$  and $|\xi-\xi_3|\gg 1$ separately.\\

	First, consider $|\xi-\xi_3|\lesssim 1$: Let $\xi_1\in I_n$ so that $\xi_1=n+O(1)$, and thus $\xi_3=n+O(1)$. Therefore, $|\xi-n|=|\xi-\xi_3+\xi_3-n|\leq |\xi-\xi_3|+|\xi_3-n|\lesssim 1$. Consequently, $\xi=n+O(1)$ and $\xi_2=-\xi_1-\xi_3-\xi=-3n+O(1)$.\\
	For clarity, we assume $\xi\in I_n$, $\xi_2\in I_{-3n}$ and $\xi_3\in I_n$.
	Now, similarly as in the \textbf{Sub-subcase D1.1}, we obtain
	\begin{equation*}
		\begin{split}
			I 
			& =\sum_{n\in\mathbb{Z}}\langle n\rangle^{s+1}\left|\int_{\mathbb{R}^2}(w_1)_n (w_2)_{-3n} v_{n} z_n\,dxdt\right|\\
			& \leq \|w_1\|_{X^{s,\frac{1}{2}+\epsilon}_{p,\alpha}}
			\|w_2\|_{X^{s,\frac{1}{2}+\epsilon}_{p,\alpha}}
			\|v\|_{X^{s,\frac{1}{2}+\epsilon}_{p}}
			\|z\|_{X^{0,\frac{1}{2}-2\epsilon}_{p'}}.
		\end{split}
	\end{equation*}

	Now, consider $|\xi-\xi_3|\gg 1$: Let $\xi_1\in I_n$ and $\xi\in I_m$ so that $\xi_1=n+O(1)$, $\xi=m+O(1)$, $\xi_3=n+O(1)$ and $\xi_2=-\xi_1-\xi_3-\xi=-2n-m+O(1)$.
	\\
	
	We will suppose that $|\xi-\xi_3|\geq |\xi+\xi_3|$, the case $|\xi+\xi_3|\geq |\xi-\xi_3|$ follows similarly. Since $\xi-\xi_3=m-n+O(1)$, using the Proposition \ref{O(1)} and Proposition \ref{O(2)}, we obtain 
	\begin{equation}\label{4.1.3.2_1}
		|m-n|\sim |\xi-\xi_3|=\max\{|\xi\pm                 
		\xi_3|\}=|\xi|+|\xi_3|\sim |m|\sim |n|.
	\end{equation}
	Similarly, since $\xi_1-\xi_2=3n+m+O(1)$, one has
	\begin{equation*}
		|3n+m|\sim |\xi_1-\xi_2|=\max\{|\xi_1\pm\xi_2|\}=|\xi_1|+|\xi_2|\sim |m|\sim |n|.
	\end{equation*}
	As $\xi_1+\xi_2=-m-n+O(1)$, again by  Proposition \ref{O(1)}, we get that $|m+n|\sim |\xi_1+\xi_2|\gg 1$.
	\\
	As above we only consider the contribution when $\xi_3\in I_n$ and  $\xi_2\in I_{-2n-m}$.
	Using H\"older's inequality, Corollary \ref{Pi_mnalpha} \,two times, (\ref{casos_restantes}), (\ref{4.1.3.2_1}), we have
	\begin{equation*}
		\begin{split}
			I &:=\sum_{\begin{smallmatrix}
					m,n\in\mathbb{Z}\\|m|\sim |n|, |m+n|\gg 1\\
					|m+3n|,|m-n|\gtrsim |n|
			\end{smallmatrix}} 
		|m|^{s+1}\left| \int_{\mathbb{R}^2} (w_1)_n (w_2)_{-2n-m} v_{n} z_{m}\, dxdt\right|\\
		&\leq
			\sum_{\begin{smallmatrix}
					m,n\in\mathbb{Z}\\|m|\sim |n|, |m+n|\gg 1\\
					|m+3n|,|m-n|\gtrsim |n|
			\end{smallmatrix}}
			|n|^{s+1}\|(w_1)_n (w_2)_{-2n-m}\|_{L^2_{x,t}} 
			\|v_{n} z_{m}\|_{L^2_{x,t}}\\ 
			&\lesssim 
			\sum_{\begin{smallmatrix}
					m,n\in\mathbb{Z}\\|m|\sim |n|, |m+n|\gg 1\\
					|m+3n|,|m-n|\gtrsim |n|
			\end{smallmatrix}}
			\frac{|n|^{s+1}}{|m+n|\sqrt{|m-n||m+3n|}}
			\|(w_1)_n \|_{X^{0,\frac{1}{2}+}_\alpha}
			\|(w_2)_{-2n-m}\|_{X^{0,\frac{1}{2}+}_\alpha}
			\|v_n\|_{X^{0,\frac{1}{2}+}}
			\|z_m\|_{X^{0,\frac{1}{2}+}}\\
			& \lesssim 
			\sum_{\begin{smallmatrix}
					m,n\in\mathbb{Z}\\|m|\sim |n|, |m+n|\gg 1\\
					|m+3n|,|m-n|\gtrsim |n|
			\end{smallmatrix}}
			\frac{1}{|m+n|\, |n|^{2s-}}
			\|(w_1)_n\|_{X^{s,\frac{1}{2}+}_\alpha}
			\|(w_2)_{-2n-m}\|_{X^{s,\frac{1}{2}+}_\alpha}
			\|v_n\|_{X^{s,\frac{1}{2}+}}
			\|z_m\|_{X^{0,\frac{1}{2}-}}\\
			& =
			\sum_{\begin{smallmatrix}
					|m|, |n|, |m+ n|\gg 1
			\end{smallmatrix}}
			\frac{1}{|m+n|\, |n|^{2s-}}
			a_n b_n  c_{-2n-m} d_m\\
			& \lesssim
			\sum_{\begin{smallmatrix}
					|m+ n|\geq |n|\geq 1
			\end{smallmatrix}}
			\frac{1}{|n|^{1+2s-}}
			a_n b_n  c_{-2n-m}d_m+ \sum_{\begin{smallmatrix}
					1\leq |m+ n|< |n|
			\end{smallmatrix}}
			\frac{1}{|m+n|^{1+2s-}}
			a_n b_n  c_{-2n-m}d_m,
		\end{split}
	\end{equation*}
	the rest of the proof is the same as in \eqref{x1sub subcase 4.1.2xw3} and \eqref{x1sub subcase 4.1.2xw4}, so we omit the details.
	\\
	
	\noindent
	\textbf{Sub-subcase D1.4.} $|\xi_1+\xi_2|\gg 1$ \,\textbf{and}\, $|\xi_i+\xi_j|\lesssim 1$ \textbf{for some} $(i,j)\neq (1,2)$ \textbf{:}
	In this case we follow the same arguments used in \textbf{Sub-subcase D1.3.}
	\\
	
	\noindent
	\textbf{Sub-subcase D1.5.} $|\xi_1+\xi_2|\gg 1$ \,\textbf{and}\, $|\xi_i+\xi_j|\gg 1$ \textbf{for some} $(i,j)\neq (1,2)$ \textbf{:}
	In the \textbf{Subcase D1} we considered $|\xi_1-\xi_2|\geq |\xi_1+\xi_2|\gg 1$, and consequently we have $|\xi_i\pm\xi_j|\gg 1$ for all $(i,j)$. Moreover $|\xi+\xi_j|=|-\xi_1-\xi_2-\xi_3+\xi_j|=\left|\sum_{i\in \{1,2,3\}\backslash\{j\}}\xi_i\right|\gg 1, j=1,2,3$.  
	For $\tau+\tau_1+\tau_2+\tau_3=0$ and $\xi+\xi_1+\xi_2+\xi_3=0$, we have
	\begin{equation}\label{sigmas_sumatorio}
\begin{split}
	\sigma+\sigma^1+\sigma^2+\sigma_3&=(\tau-\xi^3)+(\tau_1- \xi_1^3)+(\tau_2- \xi_2^3)+(\tau_3- \xi_3^3)\\
&=-\xi^3- \xi_1^3-\xi_2^3- \xi_3^3
\\
& =3(\xi_1+\xi_2)(\xi_2+\xi_3)(\xi_3+\xi_1)
\end{split}
	\end{equation}
where $\sigma^j=\sigma^j(\tau_j,\xi_j)=\tau-\xi^3_j$, $j=1,2$.
Let
 \begin{equation*}
		\sigma^{max}:=\max\{|\sigma|,|\sigma^1|,|\sigma^2|,|\sigma_3|\}.
	\end{equation*}

	Without loss of generality, we assume that $|\sigma^1|=|\sigma^{max}|$.  Let $\xi_j\in I_{n_j}$, $j=1,2,3$ and $\xi\in I_n$.
	Using  Proposition \ref{O(1)}, one obtains
	$$\langle\sigma^1\rangle\gtrsim|\xi_1+\xi_2||\xi_2+\xi_3|
	|\xi_1+\xi_3|\sim|n_1+n_2||n_2+n_3||n_1+n_3|\gg 1.$$
	Observe that 
	\begin{equation}\label{4.1.5_1_2}
		n_1+n_2+n_3+n\leq 0=\xi+\xi_1+\xi_2+\xi_3\leq n_1+n_2+n_3+n+4.
	\end{equation}

	Let\, $\overset{\alpha}{f}(x):=\alpha^{1/3}f(\alpha^{1/3}x)$. With this definition, we have
	\begin{equation}\label{4.1.5_2_2}
		\|w_{n_2}\|_{L^\infty_t L^2_x}\sim\|\overset{\alpha}{w_{n_2}}\|_{L^\infty_t L^2_x}
		\lesssim\|\overset{\alpha}{w_{n_2}}\|_{X^{0,\frac{1}{2}+}}\sim\|w_{n_2}\|_{X_{\alpha}^{0,\frac{1}{2}+}},
\end{equation}
and 
\begin{equation}\label{4.1.5_2_2x}
\begin{split}
		\|w_{n_1}\|_{L^2_{x,t}} &\sim \|\overset{\alpha}{w_{n_1}}\|_{L^2_{x,t}}\lesssim\frac{1}{\sqrt{|n_1+n_2|
				|n_2+n_3||n_1+n_3|}}
		\|\overset{\alpha}{w_{n_1}}\|_{X^{0,\frac{1}{2}+}} \\
&\sim\frac{1}{\sqrt{|n_1+n_2|
				|n_2+n_3||n_3+n_1|}}\|w_{n_1}\|_{X_\alpha^{0,\frac{1}{2}+}}.
\end{split}
	\end{equation}
	From \eqref{4.1.5_1_2}, \eqref{4.1.5_2_2},  \eqref{4.1.5_2_2x},  Bernstein and Holder's inequalities and Corollary \ref{Pi_mnalpha}, we have
	\begin{equation*}
		\begin{split}
			I
			&:=\sup_{\begin{smallmatrix}
					k\in \Z\\|k| \gg  1
			\end{smallmatrix}}\sum_{\begin{smallmatrix}|n|, |n_1|, |n_2|, |n_3|\sim |k|\\
			n_1+n_2+n_3+n=O(1)
			\end{smallmatrix}} |n|^{s+1}
			\left|\int_{\mathbb{R}^2}w_{n_1}w_{n_2}v_{n_3}z_n\,         dxdt\right|\\
			& \leq \sup_{\begin{smallmatrix}
					k\in \Z\\|k| \gg  1
			\end{smallmatrix}}\sum_{\begin{smallmatrix}|n|, |n_1|, |n_2|, |n_3|\sim |k|\\
			n_1+n_2+n_3+n=O(1)
			\end{smallmatrix}}|n|^{s+1}
			\|w_{n_1}w_{n_2}\|_{L^2_{x,t}}
			\|v_{n_3}z_n\|_{L^2_{x,t}}\\
			& \lesssim \sup_{\begin{smallmatrix}
					k\in \Z\\|k| \gg  1
			\end{smallmatrix}}\sum_{\begin{smallmatrix}|n|, |n_1|, |n_2|, |n_3|\sim |k|\\
			n_1+n_2+n_3+n=O(1)
			\end{smallmatrix}}|n|^{s+1}
			\|w_{n_1}\|_{L^2_{x,t}}
			\|w_{n_2}\|_{L^\infty_t L^2_x}
			\|v_{n_3}z_n\|_{L^2_{x,t}}.
		\end{split}
	\end{equation*}
	
Now,
	\begin{equation*}
		\begin{split}
			I
			&\lesssim \sup_{\begin{smallmatrix}
					k\in \Z\\|k| \gg  1
			\end{smallmatrix}}\sum_{\begin{smallmatrix}|n|, |n_1|, |n_2|, |n_3|\sim |k|\\
			n_1+n_2+n_3+n=O(1)
			\end{smallmatrix}}|n|^{s+1+}
			\frac{1}{\sqrt{|n_1+n_2|
					|n_2+n_3||n_3+n_1|}}
			\frac{1}{\sqrt{|n_3+n||n_3-n|}}\times\\
			& \hspace{0.5cm}\times
			\|w_{n_1}\|_{X_\alpha^{0,\frac{1}{2}+}}
			\|w_{n_2}\|_{X_\alpha^{0,\frac{1}{2}+}}
			\|v_{n_3}\|_{X^{0,\frac{1}{2}+}}
			\|z_n\|_{X^{0,\frac{1}{2}-}}.
		\end{split}
	\end{equation*}
	Using Proposition \ref{O(1)}, we obtain
	\begin{enumerate}
		\item [a)]$|n_1-n_2|\sim|\xi_1-\xi_2|=\max\{|\xi_1\pm\xi_2|\}=
		|\xi_1|+|\xi_2|\sim |n|$, 
		\item [b)] $n_{03}=\max\{|n\pm n_3|\}=|n|+|n_3|\sim|n|$,
		\item [c)] $n_{13}=\max\{|n_1+n_3|,|n_2+n_3|\}=
		\frac{|n_1+2n_3+n_2|}{2}+\frac{|n_1-n_2|}{2}
		\gtrsim |n|$.
	\end{enumerate}
	Without loss of generality, we assume that $n_{03}=|n_3-n|\gtrsim |n|$ \,and\, $n_{13}=|n_1+n_3|\gtrsim |n|$. Consequently, from a), b), c) above and  (\ref{sub subcase 4.1.2}), we get
	\begin{equation*}
		\begin{split}
			I
			&\lesssim \sup_{\begin{smallmatrix}
					k\in \Z\\|k| \gg  1
			\end{smallmatrix}}|k|\sum_{\begin{smallmatrix}|n_1|, |n_2|, |n_3|\sim |k|\\
			n_1+n_2+n_3+n=O(1)
			\end{smallmatrix}} \frac{a_{n_1} b_{n_2}c_{n_3}d_{n}}{|n+n_3||n_2+n_3|^{1/2}|n|^{1+2s-}}\\
			&=:\sup_{\begin{smallmatrix}
					k\in \Z\\|k| \gg  1
			\end{smallmatrix}}|k| \quad \mathcal{X}(k)
		\end{split}	
	\end{equation*}
	where $a_{n_1} =\|w_{n_1}\|_{X^{s,\frac{1}{2}+}_\alpha}$, $b_{n_2}=\|w_{n_2}\|_{X^{s,\frac{1}{2}+}_\alpha}$, $c_{n_3}=\|v_{n_3}\|_{X^{s,\frac{1}{2}+}}$, $d_{n}=\|z_n\|_{X^{0,\frac{1}{2}-}}$. Considering $a_{n_1}$ as $a_{-n-\gamma}$, $\gamma \in \Z$ and considering the sum on $n_2$, $n_3$ and $n$, we have two cases, viz.,  $|n+n_3|\geq |n|$ and $|n+n_3|\leq |n|$.

	First, consider the case $|n+n_3|\geq |n|$. In this case, using H\"older's and Young inequalities
	\begin{equation*}
		\begin{split}
			\mathcal{X}(k)
			&\lesssim \|b_{n_2}\|_{\ell^p}\left\|\sum_{|n_3|\sim |k|} \frac{ c_{n_3}}{|n_3|^{2+2s-}|n_2+n_3|^{1/2}}\sum_{n} a_{-n -\gamma}d_n\right\|_{\ell^{p'}(|n_2|\sim |k|)}\\
			&\lesssim \|a_{n_1}\|_{\ell^p}\|b_{n_2}\|_{\ell^p}\|d_{n}\|_{\ell^{p'}}\left\|\frac{c_{n_3}}{|n_3|^{2+2s-}}\right\|_{\ell^{r_1}(|n_3|\sim |k|)}\left\|\frac{1}{|n_2|^{1/2}}\right\|_{\ell^{r_2}(|n_2|\sim |k|)}\\
			&\lesssim \|a_{n_1}\|_{\ell^p}\|b_{n_2}\|_{\ell^p}\|c_{n_3}\|_{\ell^p}\|d_{n}\|_{\ell^{p'}}\left\|\frac{1}{|n_3|^{2+2s-}}\right\|_{\ell^{r_3}(|n_3|\sim |k|)}\left\|\frac{1}{|n_2|^{1/2}}\right\|_{\ell^{r_2}(|n_2|\sim |k|)}\\
&\lesssim \|a_{n_1}\|_{\ell^p}\|b_{n_2}\|_{\ell^p}\|c_{n_3}\|_{\ell^p}\|d_{n}\|_{\ell^{p'}}\frac{1}{|k|^{(2+2s-)-1/r_3}}\frac{1}{|k|^{1/2-1/r_2}}
		\end{split}	
	\end{equation*}
	where 
	\begin{equation}\label{indices1}
	\frac{1}{p'}+1=\frac{1}{r_1}+\frac{1}{r_2}, \quad \textrm{and} \quad \frac{1}{r_1}=\frac{1}{p}+\frac{1}{r_3}.
	\end{equation}
	
	The relations in \eqref{indices1} and the relation $s>\frac14-\frac1{p}$ imply that
	$$
	{(2+2s-)+\frac12-\frac1{r_2}-\frac1{r_3}}>1.
	$$

	Now, consider the case  $|n+n_3|\leq |n|$. In this case too, using H\"older's and Young inequalities, we get
	\begin{equation}\label{4.1.5_0x}
		\begin{split}
			\mathcal{X}(k)
			&\lesssim \|b_{n_2}\|_{\ell^p}\left\|\sum_{|n_3|\sim |k|} \frac{ c_{n_3}}{|n_2+n_3|^{1/2}}\sum_{|n|\sim |k|} \frac{a_{-n -\gamma}d_n}{|n+n_3|^{2+2s-}}\right\|_{\ell^{p'}(|n_2|\sim |k|)}.
	\end{split}	
	\end{equation}
	
	We estimate \eqref{4.1.5_0x} dividing in the following two cases:
	
	\noindent
	{\bf i) $|n+n_3|\geq |n_2+n_3|$:}
	In this case we have
	\begin{equation*}
		\begin{split}
			\mathcal{X}(k)
			&\lesssim \|a_{n_1}\|_{\ell^p}\|b_{n_2}\|_{\ell^p}\|d_{n}\|_{\ell^{p'}}\left\|\sum_{|n_3|\sim |k|} \frac{ c_{n_3}}{|n_2+n_3|^{5/2+2s-}}\right\|_{\ell^{p'}(|n_2|\sim |k|)}\\
			&\lesssim \|a_{n_1}\|_{\ell^p}\|b_{n_2}\|_{\ell^p}\|c_{n_3}\|_{\ell^p}\|d_{n}\|_{\ell^{p'}}\left\|\sum_{|n_3|\sim |k|} \frac{1}{|n_3|^{5/2+2s-}}\right\|_{\ell^{\frac{p}{(2(p-1)}}(|n_2|\sim |k|)}\\
			&\lesssim \|a_{n_1}\|_{\ell^p}\|b_{n_2}\|_{\ell^p}\|c_{n_3}\|_{\ell^p}\|d_{n}\|_{\ell^{p'}}\frac{1}{|k|^{\frac52+2s-\frac{2(p-1)}{p}}}, 
	\end{split}	
	\end{equation*}
	and $\frac52+2s-\frac{2(p-1)}{p}>1$ since $s>\frac14-\frac1{p}$.\\

	\noindent
	{\bf ii) $|n+n_3|\leq |n_2+n_3|$: }	
	In this case using H\"older's and Young inequalities, we have
	\begin{equation*}
		\begin{split}
			\mathcal{X}(k)
			&\lesssim \|b_{n_2}\|_{\ell^p}\left\|\sum_{|n_3|\sim |k|} c_{n_3} \sum_{|n|\sim |k|} \frac{a_{-n -\gamma}d_n}{|n+n_3|^{5/2+2s-}}\right\|_{\ell^{p'}(|n_2|\sim |k|)}\\
		&\lesssim |k|^{1/p'}\|b_{n_2}\|_{\ell^p} \|c_{n_3}\|_{\ell^p}\left\|\sum_{|n|\sim |k|} \frac{a_{-n -\gamma}d_n}{|n+n_3|^{5/2+2s-}}\right\|_{\ell^{p'}(|n_3|\sim |k|)}\\	
		&\lesssim |k|^{1/p'}\|b_{n_2}\|_{\ell^p} \|c_{n_3}\|_{\ell^p}\|a_{-n -\gamma}d_n\|_{\ell^1}\left\|\sum_{|n|\sim |k|} \frac{1}{|n_3|^{5/2+2s-}}\right\|_{\ell^{p'}(|n_3|\sim |k|)}\\	
		&\lesssim |k|^{1/p'}\|a_{n_1}\|_{\ell^p}\|b_{n_2}\|_{\ell^p} \|c_{n_3}\|_{\ell^p}\|d_n\|_{\ell^{p'}}\frac{1}{|k|^{\frac52+2s-\frac{1}{p'}}},
	\end{split}	
	\end{equation*}
	and $\frac52+2s-\frac{1}{p'}>\frac{1}{p'}+1$ if $s>\frac14-\frac1{p}$.\\
	
	\noindent
	\textbf{Subcase D2.} $|\xi_i-\xi_j|\leq |\xi_i+\xi_j|$ \textbf{for all pair} $(i,j)$ \textbf{:} In this case, by Proposition \ref{O(1)}, for all $j=1,2,3$, $\xi_j$ has the same sign and $|\xi_i+\xi_j|=|\xi_i|+|\xi_j|$. Furthermore since $\xi+\xi_1+\xi_2+\xi_3=0$, we get
	\begin{equation*}
		|\xi+\xi_j|=\left|\sum_{i\in\{1,2,3\}\setminus\{j\}} \xi_i\right| \sim |\xi_{max}|,\,\,|\xi-\xi_j|=|\xi|+|\xi_j|\sim |\xi_{max}|.
	\end{equation*}
	From (\ref{condition resonant case}) and (\ref{sigmas_sumatorio}), we also have 
	\begin{equation}\label{Subcase 4.2}
		\langle \sigma_{max}\rangle \gtrsim |\xi_{max}|^3.
	\end{equation}
	
	\noindent
	\textbf{Sub-subcase D2.1.} \,$\sigma_j=\sigma_{\max}$ \textbf{for some} $j=1,2,3$:  In this case we use a dyadic descomposition. We assume  $\sigma_1=\sigma_{\max}$ and can proceed similarly in the other cases.
	
	Using Bernstein's inequality, (\ref{Subcase 4.2}), Lemma \ref{Op de Riesz} $(|\xi+\xi_3|^\frac{1}{2} |\xi-\xi_3|^\frac{1}{2}\gtrsim N)$ together with the inequality (\ref{casos_restantes}) $(\langle\sigma\rangle^{\frac{1}{2}+}\lesssim \langle\sigma\rangle^{\frac{1}{2}-} N^{0+})$,  similarly as in the non-resonant case, we have that
	\begin{equation*}
		\begin{split}
			I
			& := \sum_{N_{min}\sim N_{med}\sim N} N^{s+1} \left|\int_{\mathbb{R}^2}(w_1)_{N_1} (w_2)_{N_2} v_{N_3} z_N \,dxdt\right|\\
			& \lesssim \sum_{N_{min}\sim N_{med}\sim N} N^{s+1+\frac{1}{2}-1}
			\|(w_1)_{N_1}\|_{L^2_{x,t}} \|(w_2)_{N_2}\|_{L^2_x L^\infty_t} \|v_{N_3}\|_{X^{0,\frac{1}{2}+}} \|z_N\|_{X^{0,\frac{1}{2}+}}\\
			& \lesssim \sum_{N_{min}\sim N_{med}\sim N} N^{s+\frac{1}{2}}
			N_1^{-3(\frac{1}{2}+\epsilon)}
			\|(w_1)_{N_1}\|_{X^{0,\frac{1}{2}+}_\alpha} 
			\|(w_2)_{N_2}\|_{X^{0,\frac{1}{2}+}_\alpha} 
			\|v_{N_3}\|_{X^{0,\frac{1}{2}+}} \|z_N\|_{X^{0,\frac{1}{2}+}}\\
			& \lesssim \sum_{N_{min}\sim N_{med}\sim N} N^{s-1-3s+3(\frac12-\frac1{p})+}
			\|(w_1)_{N_1}\|_{X^{s,\frac{1}{2}+}_{p,\alpha}} 
			\|(w_2)_{N_2}\|_{X^{s,\frac{1}{2}+}_{p,\alpha}} 
			\|v_{N_3}\|_{X^{s,\frac{1}{2}+}_p} \|z_N\|_{X^{0,\frac{1}{2}+}_p}
		\end{split}
	\end{equation*}
	and $s-1-3s+3(\frac12-\frac1{p})+<0$ if $s>\frac14-\frac3{2p}$.
	\\
	
	\noindent
	\textbf{Sub-subcase D2.2.} $\sigma=\sigma_{\max}$\textbf{:} In this case we will use the unit-cube decomposition. We proceed by dividing this into further two sub-cases:
	
	\noindent
	\textbf{Sub-subcase D2.2.1.} $\sigma=\sigma_{\max}$ \textbf{and} $|\xi_i-\xi_j|\lesssim 1$ \textbf{for some} $i\neq j$ \textbf{:} Without loss of generality, we may assume $|\xi_1-\xi_2|\lesssim 1$ and analyse considering $|\xi_1-\xi_3|\lesssim 1$ and  $|\xi_1-\xi_3|\gg 1$ separately.
	\\
	
	First consider the case when $|\xi_1-\xi_3|\lesssim 1$. In this case, let $\xi_1\in I_n$, so that $\xi_2=n+O(1)$, $\xi_3=n+O(1)$ and $\xi=-3n+O(1)$.
	\\
	Considering only the diagonal case, and using the H\"older's and Bernstein's inequalities along with (\ref{Subcase 4.2}), we have that
	\begin{equation}\label{4.2.2.1}
		\begin{split}
			\sum_{n\in\mathbb{Z}} |n|^{s+1} &\left|\int_{\mathbb{R}^2}(w_1)_n (w_2)_n v_n z_{-3n}\,dxdt\right| \leq \sum_{n\in\mathbb{Z}} |n|^{s+1} \|(w_1)_n (w_2)_n\|_{L^2_{x,t}} \|v_n z_{-3n}\|_{L^2_{x,t}}\\
			& \leq \sum_{n\in\mathbb{Z}} |n|^{s+1}  \|(w_1)_n\|_{L^\infty_{x,t}}
			\|(w_2)_n\|_{L^2_{x,t}} \|v_n\|_{L^\infty_{x,t}} \|z_{-3n}\|_{L^2_{x,t}}\\
			& \lesssim \sum_{n\in\mathbb{Z}} |n|^{s+1}
			\|(w_1)_n\|_{X^{0,\frac{1}{2}+}_\alpha}
			\|(w_2)_n\|_{X^{0,\frac{1}{2}+}_\alpha} 
			\|v_n\|_{X^{0,\frac{1}{2}+}} 
			|n|^{-\frac{3}{2}+3\epsilon}\|z_{-3n}\|_{X^{0,\frac{1}{2}-}}\\
			& \sim \sum_{n\in\mathbb{Z}} \langle n\rangle^{-2s-\frac{1}{2}+3\epsilon}
			\|(w_1)_n\|_{X^{s,\frac{1}{2}+}_\alpha}
			\|(w_2)_n\|_{X^{s,\frac{1}{2}+}_\alpha} 
			\|v_n\|_{X^{s,\frac{1}{2}+}} 
			\|z_{-3n}\|_{X^{0,\frac{1}{2}-}}.
		\end{split}
	\end{equation}
	Now, using H\"older's inequality we arrive at the required estimate as in the {\bf Subcase D1.1}.\\
	
	Now, consider the case when $|\xi_1-\xi_3|\gg 1$. Let $\xi_1\in I_n$, $\xi_3\in I_m$, then $\xi_2\in I_{-2n-m+k}$, $j,k=O(1)$.
	\\ 
	By Proposition \ref{O(1)} we have than $|\xi_1-\xi_3|\sim |n-m|\gg 1$ and $|n+m|\sim |\xi_1+\xi_3|=|\xi_1|+|\xi_3|\sim |n|$. Considering only the diagonal case $j=k=0$. Using Bernstein's inequality (\ref{Desigual_Bernstein_1}), Corollary \ref{CorolPPi1} and (\ref{Subcase 4.2}), we obtain
	\begin{equation*}
	\begin{split}
		\sup_{\begin{smallmatrix}
					k\in \Z\\|k| \gg  1
			\end{smallmatrix}} &\sum_{\begin{smallmatrix}
				m,n\in\mathbb{Z}\\|m|\sim |n|\sim|k|
		\end{smallmatrix}} |n|^{s+1}\left|\int_{\mathbb{R}^2}
		(w_1)_n (w_2)_n v_m z_{-2n-m}\, dxdt\right| \\
		 &\lesssim  \sup_{\begin{smallmatrix}
					k\in \Z\\|k| \gg  1
			\end{smallmatrix}} \sum_{\begin{smallmatrix}
				m,n\in\mathbb{Z}\\|m|\sim |n|\sim |k|
		\end{smallmatrix}} |n|^{s+1} \|(w_1)_n v_m\|_{L^2_{x,t}}
		\|(w_2)_n z_{-2n-m}\|_{L^2_{x,t}}\\
			& \lesssim \sup_{\begin{smallmatrix}
					k\in \Z\\|k| \gg  1
			\end{smallmatrix}}\sum_{\begin{smallmatrix}
					m,n\in\mathbb{Z}\\|m|\sim |n|\sim |k|\\|m\pm n|\gg 1
			\end{smallmatrix}} \frac{|n|^{-2s-\frac{1}{2}+}}{\sqrt{|m-n||m+n|}}
			\|(w_1)_n\|_{X^{s,\frac{1}{2}+}_\alpha}
			\|v_m\|_{X^{s,\frac{1}{2}+}}
			\|(w_2)_n\|_{X^{s,\frac{1}{2}+}_\alpha}
			\|z_{-m-2n}\|_{X^{0,\frac{1}{2}-}}\\
			& \lesssim 
			\|w_1\|_{X^{s,\frac{1}{2}+}_{\infty,\alpha}}
			\|z\|_{X^{s,\frac{1}{2}-}_\infty}
			\sup_{\begin{smallmatrix}
					k\in \Z\\|k| \gg  1
			\end{smallmatrix}}\sum_{\begin{smallmatrix}
					m,n\in\mathbb{Z}\\|m|\sim |n|\sim |k|\\|m- n|\gg 1
			\end{smallmatrix}} \frac{1}{|m-n|\langle n\rangle^{1/2+2s-}}  
			\|(w_2)_n\|_{X^{s,\frac{1}{2}+}_\alpha}          
			\|v_m\|_{X^{s,\frac{1}{2}+}}\\
			& =:
			\|w_1\|_{X^{s,\frac{1}{2}+}_{\infty,\alpha}}
			\|z\|_{X^{s,\frac{1}{2}-}_\infty}
			\sup_{\begin{smallmatrix}
					k\in \Z\\|k| \gg  1
			\end{smallmatrix}}\mathcal{Y}(k)
		\end{split}
	\end{equation*}
	Let $a_n=\|(w_2)_n\|_{X^{s,\frac{1}{2}+}_\alpha} $ and $b_m=\|v_m\|_{X^{s,\frac{1}{2}+}}$. Then, we have
	\begin{equation*}
	\begin{split}
	\mathcal{Y}(k)&=\sum_{\begin{smallmatrix}
					m,n\in\mathbb{Z}\\|m|\sim |n|\sim |k|\\|m-n|\gg 1
			\end{smallmatrix}} \frac{1}{|m-n|\langle n\rangle^{1/2+2s-}}  
			a_n b_m\\
			& \lesssim \|b_m\|_{\ell^p}\left\| \sum_{|n|\sim|k|} \frac{a_n}{|n|^{1/2+2s-}|m-n|} \right\|_{\ell^{p'}(|m|\sim |k|)}\\
		& \lesssim \|b_m\|_{\ell^p}\left\| \frac{a_n}{|n|^{1/2+2s-}}\right\|_{\ell^{r_1}(|n|\sim |k|)}\left\| \frac{1}{|m|}\right\|_{\ell^{r_2}(|m|\sim |k|)}\\	
		\\
		& \lesssim \|a_n\|_{\ell^p}\|b_m\|_{\ell^p} \frac{1}{|k|^{(\frac12+2s-)-\frac{1}{r_3}}}\frac{1}{|k|^{1-\frac{1}{r_2}}},
	\end{split}
	\end{equation*}
	where 
	\begin{equation}\label{indices2}
	\frac{1}{p'}+1=\frac{1}{r_1}+\frac{1}{r_2}, \quad \textrm{and} \quad \frac{1}{r_1}=\frac{1}{p}+\frac{1}{r_3}
	\end{equation}
	The relations in \eqref{indices2} and the relation $s>\frac14-\frac1{p}$ imply that
	$$
	{(\frac12+2s-)+1-\frac1{r_2}-\frac1{r_3}}>0.
	$$
	
	\noindent
	\textbf{Sub-subcase D2.2.2.} $\sigma=\sigma_{\max}$ \textbf{and} $|\xi_i-\xi_j|\gg 1$ \textbf{for all} $i\neq j$. Let $\xi_j\in I_{n_j}$, $j=1,2,3$ and $\xi\in I_n$.\\
	By Proposition \ref{O(1)}, we have that $|n_i-n_j|\sim |\xi_i-\xi_j|\gg 1$ and also $|n_i+n_j|\sim |\xi_i+\xi_j|\sim |\xi_i|+|\xi_j|\sim |n_i|+|n_j|\sim |\xi_{\max}|\gg 1$.
	\\
	
	Using Bernstein's inequality (\ref{Desigual_Bernstein_1}), Corollary \ref{CorolPPi1}, and (\ref{Subcase 4.2}), we obtain
	\begin{equation*}
		\begin{split}
			I & := \sum_{n_1+n_2+n_3+n=O(1)} |n|^{s+1}\left|\int_{\mathbb{R}^2}(w_1)_{n_1} (w_2)_{n_2} v_{n_3} z_n\, dxdt\right|\\
			& \leq \sum_{n_1+n_2+n_3+n=O(1)} |n|^{s+1} \|(w_1)_{n_1} (w_2)_{n_2}\|_{L^2_{x,t}} \|v_{n_3} z_n\|_{L^2_{x,t}}\\
			& \lesssim \sum_{n_1+n_2+n_3+n=O(1)}
			|n|^{s+1} \|(w_1)_{n_1} (w_2)_{n_2}\|_{L^2_{x,t}}
			\|v_{n_3}\|_{L^\infty_t L^2_x} \|z_n\|_{L^2_{x,t}}\\
			& \lesssim \sum_{n_1+n_2+n_3+n=O(1)}
			|n|^{s+1} \frac{|n|^{-\frac{3}{2}+}}{\sqrt{|n_1+n_2||n_1-n_2|}} \|(w_1)_{n_1}\|_{X^{0,\frac{1}{2}+}_\alpha} \|(w_2)_{n_2}\|_{X^{0,\frac{1}{2}+}_\alpha}
			\|v_{n_3}\|_{X^{0,\frac{1}{2}+}}\\
			& \hspace{0.5 cm} \times \|z_n\|_{X^{0,\frac{1}{2}-}}.
		\end{split}
	\end{equation*}
	We observe the following $|n_1+n_2|\sim |\xi_1+\xi_2|\sim |\xi_3+\xi|\sim |n_3+n|\lesssim |n|$. Thus
	\begin{equation*}    
		\frac{|n|^{-2s-\frac{1}{2}+}}{\sqrt{|n_1+n_2|}}\sim 
		\frac{|n|^{-2s+}}{|n|^\frac{1}{2}\sqrt{|n_3+n|}}\lesssim
		\frac{|n|^{-2s+}}{|n_3+n|^{\frac{1}{2}+\frac{1}{2}}}\sim
		\frac{\langle n_1\rangle^{-2s+} \langle n\rangle^{0-}}{|n_3+n|}.
	\end{equation*}
	From Proposition \ref{O(1)}, we have that $|n_1-n_2|\sim |\xi_1-\xi_2|=|2\xi_1+\xi_3+\xi|=|2n_1+n_3+n+O(1)|\sim |2n_1+n_3+n|\gg 1$. Now, making the change of variable $n_2=O(1)-n_1-n_3-n=-n_1+\kappa$, $\kappa\in\mathbb{Z}$, we get
	\begin{equation*}
		\begin{split}
			I
			&\lesssim \sum_{n_1,n_3,n\in\mathbb{Z}}
			\frac{\langle n_1\rangle^{-2s+} \langle n\rangle^{0-}}{|n_3+n|\sqrt{|2n_1+n_3+n|}} 
			\|(w_1)_{n_1}\|_{X^{s,\frac{1}{2}+}_\alpha}
			\|(w_2)_{-n_1+\kappa}\|_{X^{s,\frac{1}{2}+}_\alpha}
			\|v_{n_3}\|_{X^{s,\frac{1}{2}+}}
			\|z_n\|_{X^{0,\frac{1}{2}-}}\\
			& \lesssim \sup_{n_3,n\in \mathbb{Z}}\left(\sum_{n_1\in\mathbb{Z}}
			\frac{\langle n_1\rangle^{-2s+} \|(w_1)_{n_1}\|_{X^{s,\frac{1}{2}+}_\alpha} \|(w_2)_{-n_1+\kappa}\|_{X^{s,\frac{1}{2}+}_\alpha}}{\sqrt{|2n_1+n_3+n|}}\right) \sum_{n_3,n\in \mathbb{Z}}\frac{\|v_{n_3}\|_{X^{s,\frac{1}{2}+}}
				\|z_n\|_{X^{0,\frac{1}{2}-}}}{|n_3+n|\langle n\rangle^{0+}}.
		\end{split}
	\end{equation*}
	Finally, applying Proposition \ref{sumatorio lp_lp'},  the same argument as in \eqref{x1sub subcase 4.1.2xw}, \eqref{x1sub subcase 4.1.2xw3} and \eqref{x1sub subcase 4.1.2xw4} noting that $(2s+\frac12)\frac{p}{p-2}>1$,  $p>2$ which is true for $s>\frac{1}{4}-\frac{1}{p}$, we obtain the required estimate. The case $p=2$ is similar as in the previous case.

	Finally, gathering information from all four cases, we conclude that the estimate (\ref{trilinear estimate_1}) holds.
\end{proof}

\begin{remark}
{\bf 1)}  The regularity restriction $s>\frac14-\frac1{p}$ is necessary only in the resonant {\bf Sub-cases D1.5} and {\bf  D2.2}. In all other cases we get a better regularity  requirement, viz.,  $s>\frac14-\frac3{2p}$. In the particular case $p=2$, the later regularity requirement coincides with the sharp result $s>-\frac12$ obtained in the authors previous work \cite{Carvajal_and_Panthee}. 
\\
{\bf 2)} 
	In the cases $p=2, 3, 4$, it  suffices to prove the trilinear estimate  \eqref{trilinear estimate_1} only for $\frac14-\frac{1}{p}< s\leq 0$. Since, for $s\geq 0$ and $\xi+\xi_1+\xi_2+\xi_3=0$, one has $\langle\xi\rangle^s\lesssim \langle\xi_1\rangle^s \langle\xi_2\rangle^s \langle\xi_3\rangle^s$. The estimate for $s>0$ follows easily from the case $s=0$ using the above triangular inequality. 
\end{remark}



\section{Proof of the Main Result}

This section is dedicated to presenting the proof of the main result of this work, stated in Theorem \ref{maintheo}.
\begin{proof}[Proof of Theorem \ref{maintheo}]
	Let $(v_0,w_0)\in M^{2,p}_s(\mathbb{R})\times M^{2,p}_s(\mathbb{R})$ with $s>\frac14-\frac1{p}$, $2\leq p <\infty$ and $T\leq 1$.
	Let us define an application
	\begin{equation}\label{Def.Operador Phi}
		\left\{
		\begin{aligned}
			\Phi(v,w)(t) &
			:=\eta_1(t)S(t)v_0+\eta_T(t)\int_0^t S(t-t')\partial_x(v w^2)(t') dt',\\
			\Psi(v,w)(t) &
			:=\eta_1(t)S_\alpha(t)w_0+\eta_T(t)\int_0^t S_\alpha(t-t')\partial_x(v^2 w)(t') dt'.
		\end{aligned}
		\right.
	\end{equation}
	
	We will prove that there exists a time $T>0$ such that the application $\Phi\times \Psi$ defines a contraction on the ball
	\begin{equation}\label{Def Omega de a}
		\Omega_a:=\{(u,v)\in X^{s,b}_p\times X^{s,b}_{p,\alpha}: \|(v,w)\|_{X^{s,b}_p\times X^{s,b}_{p,\alpha}}\leq a \},
	\end{equation}
	for suitably chosen radius $a>0$.
	
	We start by proving  that $\Phi\times \Psi:\Omega_a\rightarrow \Omega_a$ 
	is well defined. Let $(v,w)\in \Omega_a$. We start by estimating the first component $\Phi$. Using the definition (\ref{Def.Operador Phi}), Lemma \ref{Lema 2.1}, Lemma \ref{Lema 2.2}, Proposition \ref{estimativ bilinear u1u2} and the definition (\ref{Def Omega de a}), we obtain
	\begin{equation}\label{1}
		\begin{split}
			\|\Phi (v,w)\|_{X^{s,b}_p} 
			& \leq \|\eta_1(t)S(t)v_0\|_{X^{s,b}_p}+\|\eta_T
			(t)\int_0^t S(t-t')\partial_x(v w^2) dt'\|_{X^{s,b}_p}\\
			& \leq c_0\|v_0\|_{M^{2,p}_s}+c_1 T^{1+b'-b}\|\partial_x(v w^2)\|_{X^{s,b'}_p}\\
			& \leq c_0\|v_0\|_{M^{2,p}_s}+c_1 T^{1+b'-b}
			c_2 \|v\|_{X^{s,b}_p}
			\|w\|^2_{X^{s,b}_{p,\alpha}}\\          
			& \leq c\|(v_0,w_0)\|_{M^{2,p}_s\times M^{2,p}_s} + c T^{1+b'-b}\|(v,w)\|^3_{X^{s,b}_p\times X^{s,b}_{p,\alpha}}\\
			& \leq c\|(v_0,w_0)\|_{M^{2,p}_s\times M^{2,p}_s} + c T^{1+b'-b} a^3.
		\end{split}
	\end{equation}
	Similarly, it is not difficult to get
	\begin{equation}\label{2}
		\|\Psi (v,w)\|_{X^{s,b}_{p,\alpha}} 
		\leq c\|(v_0,w_0)\|_{M^{2,p}_s\times M^{2,p}_s}+c T^{1+b'-b} a^3.
	\end{equation}
	Therefore, from (\ref{1}) and (\ref{2}), we have
	\begin{equation}\label{ppn1}
		\|(\Phi \times \Psi)(v,w)\|_{X^{s,b}_p\times X^{s,b}_{p,\alpha}} 
		\leq c\|(v_0,w_0)\|_{M^{2,p}_s\times M^{2,p}_s}+c T^{1+b'-b} a^3.
	\end{equation}
	
	If we choose $a=2c\|(v_0,w_0)\|_{M^{2,p}_s\times M^{2,p}_s}$ and $T>0$ such that $cT^{1+b'-b}a^2 < \frac{1}{2}$, the estimate \eqref{ppn1} yields
	\begin{equation}\label{2_5}
		\|(\Phi \times \Psi)(v,w)\|_{X^{s,b}_p\times X^{s,b}_{p,\alpha}} 
		\leq \frac{a}{2} + \frac{a}{2}.
	\end{equation}
	Thus, $\Phi\times \Psi (v,w)\in\Omega_a$.
	
	Now, we move to prove that $\Phi\times \Psi$ is a contraction.
	Given $(v,w),(\Tilde{v},\Tilde{w})\in\Omega_a$, with the argument used above, one has
	\begin{equation}\label{3}
		\begin{split}
			\|\Phi\times \Psi (v,w)- \Phi\times \Psi (\Tilde{v},\Tilde{w})\|_{X^{s,b}_p\times X^{s,b}_{p,\alpha}} 
			& \leq cT^{1+b'-b}
			(\|(v,w)\|^2_{X^{s,b}_p\times X^{s,b}_{p,\alpha}}+
			\|(\Tilde{v},\Tilde{w})\|^2_{X^{s,b}_p\times X^{s,b}_{p,\alpha}})\\
			& \quad\times \|(v,w)-(\Tilde{v},\Tilde{w})\|_{X^{s,b}_p\times X^{s,b}_{p,\alpha}}\\
			& \leq 
			2c T^{1+b'-b}a^2\|(v,w)-(\Tilde{v},\Tilde{w})\|_{X^{s,b}_p\times X^{s,b}_{p,\alpha}}.         
		\end{split}
	\end{equation}
 For the choice of $a$ and $T$ in \eqref{2_5}, if we choose $T$ also satisfying
	\begin{equation*}
		2c  T^{1+b'-b} a^2<\frac{1}{2},
	\end{equation*}
the estimate \eqref{3} shows that $\Phi\times\Psi$ is a contraction of $\Omega_a$ into $\Omega_a$. Hence, there exists a unique fixed point $(v,w)$ that solves  the integral equation (\ref{Def.Operador Phi}) for $t\in [0,T]$.	
	The rest of the proof follows a standard argument, so we omit the details.
\end{proof}


\vskip 0.3cm
\noindent{\bf Acknowledgements.} 
 The first author acknowledges the support from CNPq Universal project  (\#406460/2023-0).
The third author acknowledges the grant from FAPESP, Brazil (\#2024/10613-4). \\


\noindent
{\bf Conflict of interest statement.} 
On behalf of all authors, the corresponding author states that there is no conflict of interest.\\

\noindent 
{\bf Data availability statement.} 
The datasets generated during and/or analysed during the current study are available from the corresponding author on reasonable request.



\end{document}